\def\EquationsBySection{\def\theequation
	{\thesection.\arabic{equation}}
	\@addtoreset{equation}{section}}
\def \a{{\alpha}}
\def \b{{\beta}}
\def \G{{\Gamma}}
\newtheorem{thm}{Theorem}[section]
\newtheorem{lem}{Lemma}[section]
\newtheorem{defn}{Definition}[section]
\newtheorem{rem}{Remark}[section]
\theoremstyle{definition}
\newtheorem{eg}{Example}[section]
\numberwithin{equation}{section}
\newcommand{\be}{\begin{equation}}
	\newcommand{\ee}{\end{equation}}
\newcommand{\bes}{\begin{equation*}}
	\newcommand{\ees}{\end{equation*}}
\renewcommand{\theequation}{\thesection.\arabic{equation}}
\newcommand{\bean}{\begin{eqnarray*}}
	\newcommand{\eean}{\end{eqnarray*}}
\newcommand\old[1]{}
\begin{document}
	\date{}
	\title{The Onsager-Machlup action functional for degenerate SDEs driven by fractional Brownian motion}
	\author{
		{\bf   Shanqi Liu $^{1}$ and Hongjun Gao $^{2}$}\thanks{The corresponding author. hjgao@seu.edu.cn}\\
		\footnotesize{ 1 School of Mathematical Science, Nanjing Normal University, Nanjing 210023, China}\\
		\footnotesize{ 2 School of Mathematics, Southeast University, Nanjing 211189, China}}
	\maketitle
	\noindent {\bf \small Abstract}{\small 
		\quad In this paper, the explicit expression of Onsager-Machlup action functional to degenerate stochastic differential equations driven by fractional Brownian motion is derived provided the diffusion coeffcient and reference path satisfy some suitable conditions. Then fractional Euler-Lagrange equations for Onsager-Machlup action functional are also obtained. Finally, some examples are provided to illustrate our results.}

	\noindent\textbf{Key Words:} Onsager-Machlup action functional, degenerate stochastic differential equations, fractional Brownian motion.

	\noindent {\sl\textbf{ AMS Subject Classification}} \textbf{(2020):} Primary, 60H10; Secondary, 60F99, 60G22, 82C35. \\
	
	\section{Introduction}
Let $B^H={B_t^H, t\in[0,1]}$ be a fractional Brownian motion defined on the given complete filtered probability space $(\Omega,\mathcal{F},(\mathcal{F}_t)_{t\ge0},P)$ with Hurst index $0<H<1$. That is, $B^H$ a centered Gaussian process with covariance
$$E\big(B_t^HB_s^H\big)=\frac{1}{2}\big\{|t|^{2H}+|s|^{2H}-|t-s|^{2H}\big\}.$$
For $H=\frac{1}{2}$, fractional Brownian motion $B^H$ is classical Brownian motion, but for $H\neq\frac{1}{2}$ it is not a semimartingale nor a Markov process, and fractional Brownian motion is a very important stochastic processes in theory and application \cite{BH08,Mis08}.

In this paper, we study asymptotic behavior for solutions to the degenerate stochastic differential equation (DSDE) given by
\begin{equation}\label{dsde}
	\left\{\begin{array}{l}
		dX_t=\sigma(X_t, Y_t)dt, X_0=x,\\[3mm]
		dY_t=b(X_t,Y_t)dt+d B_t^H, Y_0=y,
	\end{array}\right.
\end{equation}
where, $t \in [0,1], x,y\in\mathbb{R}$, $\sigma\in C^1_b(\mathbb{R}\times\mathbb{R})$ and $b\in C^2_b(\mathbb{R}\times\mathbb{R})$.

More precisely, we are concerned with studying limiting behavior of ratios of the form
$$\gamma_{\varepsilon}(\phi)=\frac{P(\|Z-\phi\|_2\leq\varepsilon)}{P(\|B^H\|\leq\varepsilon)}$$
when $\varepsilon$ tends to zero, where $Z:=(X,Y)$ is the solution process of DSDE $(\ref{dsde})$, $\phi:=(\phi^{(1)},\phi^{(2)})$ is the (deterministic) reference path satisfying some regularity conditions and appropriate structure (see \eqref{structure of reference path} below), $\|Z-\phi\|_2:=\sqrt{\|X-\phi^{(1)}\|^2+\|Y-\phi^{(2)}\|^2}$ and $\|\cdot\|$ is a suitable norm defined on the functions from $[0,1]$ to proper Euclidean space. When $$\lim\limits_{\varepsilon\rightarrow 0}\gamma_{\varepsilon}(\phi)=\exp (J_0(\phi))$$ for all $\phi$ in a reasonable class of functions, then if the limit is exists, the functional $J_0$ is called the Onsager-Machlup (OM) action functional associated to $(\ref{dsde})$.

OM action functional was first given by Onsager and Machlup \cite{OM1, OM2} as the probability density functional for diffusion processes with linear drift and constant diffusion coefficients, and then Tizsa and Manning \cite{TM57} generalized the results of Onsager and Machlup to nonlinear equations. The key point was to express the transition probability of a diffusion process by a functional integral over paths of the process, and the integrand was called OM action function. Then regarding OM action function as a Lagrangian,  the most probable path of the diffusion process was determined by variational principle. However, the paths of the diffusion process are almost surely nowhere differentiable. This means that it is not feasible to use the variational principle for the path of diffusion process. To modify that, Stratonovich \cite{Str57} proposed the rigorous mathematical idea: one can ask for the probability that a path lies within a certain region, which may be a tube along a differentiable function (mostly called as reference path), comparing the probabilities of different tubes of the same 'thicknes', the OM action function is expressed by reference path instead of the path of diffusion process.

Indeed, classical Onsager-Machlup theory as shown in \cite{DB78,IW14}, for any reference path $\phi\in C^2([0,1], \mathbb{R}^m)$, the Onsager-Machlup action functional for $X_t$ is defined by
$$\lim_{\delta\to 0}\frac{P\big(\sup\limits_{t \in[0, 1]}\big|X_t-\phi_t\big| \leq \varepsilon\big)}{P(\sup\limits_{t \in[0, 1]}|W_t|\leq\varepsilon)}=\exp\Big(\frac{1}{\delta^2}\big\{L_\delta(\phi,\dot{\phi})\big\}\Big),$$
where $X_t$ is the solution of non-degenerate stochastic differential equations (SDEs):
\begin{align}
	d X_t=f(t,X_t)d t+\delta d W_t, X_0=x,\nonumber
\end{align}
where $f\in C_b^2([0,1]\times \mathbb{R}^{m})$, $W(t)$ is a $m$-dimensional Brownian motion and exact expression of Onsager-Machlup action functional is given by
$$L_\delta(\phi,\dot{\phi})=-\frac{1}{2}\int_{0}^{1}\left|\dot{\phi}_t-f(t,\phi_t)\right|^{2}d t-\frac{\delta}{2}\int_{0}^{1} \operatorname{div}_{x}f(t,\phi_t)d t,$$	
where $\operatorname{div}_{x}$  denote the divergence on the $\phi_t\in\mathbb{R}^m$. As noise intensity parameter $\delta \to 0$,
Onsager-Machlup action functional coincides with Freidlin-Wentzell action functional \cite{FW84} formally.

From the beginning of irreplaceable contributions of Stratonovich \cite{Str57}, OM action functional theory was starting to receive considerable attention by  mathematicians. Many different approaches and new problems have arisen in this process. We first review works on OM action functional for stochastic differential equations driven by non-degenerate noise \cite{Cap95,Cap20,DB78, DZ91,FK82,HT96,HT96,IW14,MN02,SZ92}.  Ikeda and Watanabe \cite{IW14} derived the OM action functional for reference path $\phi\in C^2([0,1],\mathbb{R}^{d})$ and taking the supremum norm $\|.\|_{\infty}$. D\"{u}rr and Bach \cite{DB78} obtained the same results based on the Girsanov transformation of the quasi-translation invariant measure and the potential function (path integral representation). Shepp and Zeitouni \cite{SZ92} proved that this limit theorem still holds for every norm equivalent to the supremum norm and $\phi-x$ in the Cameron-Martin space. Capitaine \cite{Cap95}  extended this result to a large class of natural norms on the Wiener space including particular cases of H\"{o}lder, Sobolev, and Besov norms. Hara and Takahashi provided in \cite{HT96} a computation of the OM action functional of an elliptic diffusion process for the supremum norm and this result was extended in \cite{Cap20} by Capitaine to norms that dominate supremum norm. In particular, the norms $\|\cdot\|$ could be any Euclidean norm dominating $L^2$-norm in the case of $\mathbb{R}^d$.

In addition to the derivation of OM action functional for SDEs derived by Brownian motion, the derivation of OM action functional for SDEs derived by fractional Brownian motion has also attracted much attention. Moret and Nualart \cite{MN02} first obtained the OM action functional for SDEs derived by fractional Brownian motion in singular and regular cases respectively. That is, for the following SDEs:
\begin{equation}\label{Nsde}
	\left\{\begin{array}{l}
		dX_t=b(X_t)dt+dB_t^H,\\[3mm]
		X_0=x\in\mathbb{R}.
	\end{array}\right.
\end{equation}
In $\frac{1}{4}<H<\frac{1}{2}$ (singular case) $\|\cdot\|$ can be either the
supremum norm or $\alpha<H-\frac{1}{ 4}$ H\"{o}lder norm. In $H>\frac{1}{2}$ (regular case) the H\"{o}lder norm can only be taken as $H-\frac{1}{2}<\alpha< H-\frac{1}{4}$. The accurate expression of OM action functional for \eqref{Nsde} obtained in both cases is:
$$L(\phi,\dot{\phi})=-\frac{1}{2}\int_{0}^{1}\Big(\dot{\phi}_s-\big(K^H\big)^{-1}\int_{0}^{s}b(\phi_u)du\Big)^2ds-\frac{1}{2}d_H\int_{0}^{1}b'(\phi_s)ds,$$
where $\dot{\phi}$ is the function such that $K^H \dot{\phi}=\phi-x$, $d_H$ is a constant depending on $H$ and the definition of $K^H$ see Definition \ref{K^H}.

Then inspired by Bardina, Rovira
and Tindel's work \cite{BRT1} of OM action functional for stochastic evolution equations and \cite{MN02}, Liang \cite{Lia10} studyed conditional exponential moment by Karhunen-{L}o\'{e}ve expansion for stochastic convolution of cylindrical fractional {B}rownian motions, but it is still an open problem about deriving the OM functional of the stochastic evolution equation driven by fractional Brownian motion
at present.

On the other hand, OM action functional for degenerate SDEs has also attracted a lot of interest. In order to determine the OM action functional for degenerate SDEs. Kurchan \cite{Kur98} derived OM action functional via a Fokker-Planck equation \cite{Ris89} corresponding to the Langevin equation. Taniguchi and Cohen \cite{TC07, TC08} obtained the OM action functional for the Langevin equation by the path integral approach. A rigorous mathematical treatment of this problem was initiated by \cite{AB99} and \cite{CN95} independently. Chaleyat-Maurel and Nualart \cite{CN95} derived Onsager-Machlup action functional for second-order stochastic differential equations with two-point boundary value condition. To derive the maximum likelihood state estimator for degenerate SDEs, Aihara and Bagchi \cite{AB99} extended OM action functional into a degenerate version of OM action functional for reference path $\phi \in H^1([0,1],\mathbb{R}^d)$ with supremum norm by the approach of \cite{SZ92}.

Compared with the existing results in this direction, the main innovation of this paper consists in that we first derived OM action functional for degenerate stochastic differential equations driven by fractional Brownian motion. This result is obtained using the ordinary approach with the following ingredients:

$\bullet$
The application of Girsanov theorem which involves the operator $(K^H)^{-1}$ and some results associated with conditional exponential moments and small ball probabilities

$\bullet$
A suitable structure of reference path under degenerate noise

$\bullet$
The equivalence between two different small ball probabilities

$\bullet$ A lot of accurate estimation. For example, we need to deal with conditional exponential moment of the stochastic integral $\int_{0}^{1}s^{-\alpha}\big(I^{\alpha}_{0+}u^{\alpha}b(\phi^{(1)}_u+\kappa_u,\phi^{(2)}_u+B_u^H)\big)(s)dW_s$ in the singular case carefully

Our main result Theorem \ref{th:singular case} and \ref{th:regular case} provide asymptotic behaviour of the small
ball probabilities of the solution to \eqref{dsde} and reference path. As a consequence of the above result we are able to obtain Euler-Lagrange fractional equations for Onsager-Machlup action functional, which
provides a characterization of the most probable path of the solution process \eqref{dsde},  see Theorem
\ref{EL} for the precise statement.

Before starting our results we give some notations.
\begin{defn}\label{K^H}
	The fractional Brownian motion has the integral representation in law:
	\begin{align}\label{relation between B and W}
		B_t^H=\int_{0}^{1}K^H(t,s)dW_s,
	\end{align}
	where $W$ is a standard Brownian motion and $K^H$ is the square integral kernel:
	\begin{align}\label{K^H(r,u)}
		K^H(r,u)=c_H(r-u)^{H-\frac{1}{2}}+c_H(\frac{1}{2}-H)\int_{u}^{r}(\theta-u)^{H-\frac{3}{2}}\Big(1-(\frac{u}{\theta})^{\frac{1}{2}-H}\Big)d\theta,
	\end{align}
	with
	$$c_H=\Big(\frac{2H\G(\frac{3}{2}-H)}{\G(H+\frac{1}{2})\G(2-2H)}\Big)^{\frac{1}{2}}.$$
\end{defn}
We also denote by $K^H$ the operator in $L^2([0,1])$ associated with the kernel $K^H$, that is
$$\big(K^Hh\big)(s)=\int_{0}^{s}K^H(s,r)h(r)dr.$$
If $p>\frac{1}{H+\frac{1}{2}}$ the operator $K^H$ is continuous in $L^p$, and we denote by
$$\mathcal{H}^p=\{K^H h, h\in L^p([0,1])\}$$
the image of $L^p([0,1])$ by $K^H$.
For $f\in L^1[a,b]$ and $\a>0$ the right-side fractional Riemann-Liouville integrals of $f$ of order $\a$ on $(a,b)$ are defined at almost all $x$ by
$$(I^{\a}_{a^+}f)(x)=\frac{1}{\G(\a)}\int_{a}^{x}(x-y)^{\a-1}f(y)dy,$$
where $\G$ denotes the Euler function.

The fractional derivative can be introduced as inverse operation. If $1\leq p<\infty$, we denote by $I^{\a}_{a^+}(L^p)$ the image of $L^p([a,b])$ by the operator $I^{\a}_{a^+}$. If $f\in I^{\a}_{a^+}(L^p)$, the function $\phi$ such that $f=I^{\a}_{a^+}\phi$ is unique in $L^p$ and it agrees with the left-sided Riemann-Liouville derivative of $f$ of order $\a$ defined by
$$(D^{\a}_{a^{+}}f)(x)=\frac{1}{\G(1-\a)}\frac{d}{dx}\int_{a}^{x}\frac{f(y)}{(x-y)^{\a}}dy.$$
\subsection{Main Results}
\label{sec:model}

We are now ready to state the main results of this paper.

\begin{thm}
	\label{th:singular case}
	Let $Z$ be the solution of \eqref{dsde} with Hurst index $\frac{1}{4}<H<\frac{1}{2}$. Let reference path $\phi=(\phi^{(1)},\phi^{(2)})$ be a function such that $\phi^{(2)}-y\in \mathcal{H}^p$ with $p>\frac{1}{H}$. Assume $\sigma\in C^1_b(\mathbb{R}^2)$ and $b\in C^2_b(\mathbb{R}^2)$. Then the Onsager-Machlup action functional of $Z$ for the norms $\|\cdot\|_{\b}$  with $0<\beta<H-\frac{1}{4}$ and $\|\cdot\|_{\infty}$ exists and is given by
	$$L(\phi,\dot{\phi})=-\frac{1}{2}\int_{0}^{1}\big|\dot{\phi}^{(2)}_s-s^{-\a}\big(I^{\a}_{0+}u^{\a}b(\phi_u)\big)(s)\big|^2ds-\frac{d_H}{2}\int_{0}^{1}b_y(\phi_{s})ds,$$
	where
	$$d_H=\Bigg(\frac{2H\G(\frac{3}{2}-H)\G(H+\frac{1}{2})}{\G(2-2H)}\Bigg)^{\frac{1}{2}},$$
	$\a=\frac{1}{2}-H$	and $\phi$ is the function such that
	\begin{equation}
		\left\{\begin{array}{l}
			\phi^{(1)}_t=x+\int_{0}^{t}\sigma(\phi)(s)ds,\\ \phi^{(2)}_t=y+(K^H\dot{\phi}^{(2)})(t).\nonumber
		\end{array}\right.
	\end{equation}
\end{thm}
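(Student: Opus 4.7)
My plan is to adapt the Girsanov-based argument of Moret--Nualart to the degenerate setting \eqref{dsde}, with a preliminary reduction that eliminates the first component $X$ from the small-ball problem. Because $\sigma\in C^1_b$ and the first line of \eqref{dsde} is an ODE driven by $Y$, the reference-path structure $\phi^{(1)}_t = x+\int_0^t\sigma(\phi)(s)\,ds$ combined with Gronwall's inequality yields a deterministic bound $\|X-\phi^{(1)}\|_\beta \le C\|Y-\phi^{(2)}\|_\beta$ (and analogously for the supremum norm). Consequently there exist $c_1,c_2>0$ such that
\[
\{\|Y-\phi^{(2)}\|_\beta \le c_1\varepsilon\} \subset \{\|Z-\phi\|_2 \le \varepsilon\} \subset \{\|Y-\phi^{(2)}\|_\beta \le c_2\varepsilon\},
\]
and since $P(\|B^H\|_\beta\le r)$ has a self-similar power-law asymptotic that is preserved (up to multiplicative constants) under rescaling $r$ by a constant, the theorem reduces to computing $\lim_{\varepsilon\to 0}P(\|Y-\phi^{(2)}\|_\beta\le\varepsilon)/P(\|B^H\|_\beta\le\varepsilon)$. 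This equivalence of the two small-ball probabilities at the rescaled radii is the second technical ingredient advertised in the introduction.

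Writing $B^H=K^HW$ from Definition \ref{K^H} and $\alpha=\tfrac12-H$, I introduce
\[
u_s := s^{-\alpha}\bigl(I^{\alpha}_{0+}r^\alpha b(X_r,Y_r)\bigr)(s) - \dot\phi^{(2)}(s),
\]
which is $(K^H)^{-1}$ applied to $\int_0^\cdot b(X_s,Y_s)\,ds - K^H\dot\phi^{(2)}$; this choice is dictated by the standard identity $K^H(s^{-\alpha}I^\alpha_{0+} r^\alpha f)(t) = \int_0^t f(r)\,dr$ for the singular kernel. The defining equation for $Y$ then becomes $Y_t-\phi^{(2)}_t = K^H(W+u)(t)$, so that under the Girsanov measure $d\tilde P/dP = \exp(-\int_0^1 u_s\,dW_s - \tfrac12\int_0^1 u_s^2\,ds)$ (with conditional Novikov-type estimates justifying the change), the process $Y-\phi^{(2)}$ has the same law as $B^H$ under $P$. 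Consequently
\[
\frac{P(\|Y-\phi^{(2)}\|_\beta\le\varepsilon)}{P(\|B^H\|_\beta\le\varepsilon)} = \mathbb{E}\Bigl[\exp\Bigl(\int_0^1 u_s\,dW_s + \tfrac12\int_0^1 u_s^2\,ds\Bigr)\,\Big|\,\|B^H\|_\beta\le\varepsilon\Bigr],
\]
where after the change of variables $u_s$ is evaluated along $(X,Y)=(\phi^{(1)}+\kappa,\phi^{(2)}+B^H)$, with $\kappa$ determined by the ODE $\kappa_t = \int_0^t[\sigma(\phi^{(1)}+\kappa,\phi^{(2)}+B^H)-\sigma(\phi^{(1)},\phi^{(2)})](s)\,ds$ and hence satisfying $\|\kappa\|_\infty \le C\|B^H\|_\infty$ pathwise by Gronwall.

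To extract the limit I Taylor-expand $b(X_r,Y_r)=b(\phi_r)+b_y(\phi_r)B^H_r+b_x(\phi_r)\kappa_r+\mathrm{O}(\|B^H\|_\beta^2)$ and substitute into the exponent. The squared drift contributes the deterministic quadratic $-\tfrac12\int_0^1|\dot\phi^{(2)}_s-s^{-\alpha}(I^\alpha_{0+}r^\alpha b(\phi_r))(s)|^2\,ds$, while the leading stochastic integral has vanishing conditional mean by Gaussian symmetry of the small-ball event. The $b_y$-linear correction produces, through an It\^o-type expansion as in Moret--Nualart, the trace constant $-\tfrac{d_H}{2}\int_0^1 b_y(\phi_s)\,ds$, and the $b_x\kappa$ contribution is subdominant since $\kappa=\mathrm{O}(\|B^H\|_\infty)$. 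The principal obstacle will be the accurate control, in the singular regime $1/4<H<1/2$, of the conditional exponential moment of $\int_0^1 s^{-\alpha}(I^\alpha_{0+}r^\alpha b(X_r,Y_r))(s)\,dW_s$ given $\|B^H\|_\beta\le\varepsilon$: the kernel $s^{-\alpha}I^\alpha_{0+}r^\alpha$ is only marginally square-integrable and the coupling through $\kappa$ prevents direct quotation of the Moret--Nualart estimates. I plan to split the integrand into a purely $B^H$-driven piece, to which those estimates transfer thanks to the regularity $\phi^{(2)}-y\in\mathcal{H}^p$ with $p>1/H$, plus a coupling remainder handled by Cauchy--Schwarz against sharp Gaussian small-ball asymptotics.
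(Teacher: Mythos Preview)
Your overall strategy---reduce to the $Y$-component via Gronwall, apply Girsanov, Taylor-expand, and isolate the trace term---matches the paper's, but there is a genuine gap in your reduction step.

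You propose the sandwich
\[
\{\|Y-\phi^{(2)}\|_\beta \le c_1\varepsilon\} \subset \{\|Z-\phi\|_2 \le \varepsilon\} \subset \{\|Y-\phi^{(2)}\|_\beta \le c_2\varepsilon\}
\]
and then assert that $P(\|B^H\|_\beta\le r)$ has a ``self-similar power-law asymptotic'' so that rescaling $r$ by a constant is harmless. This is false: by Lemma~\ref{small ball of holder}, $\log P(\|B^H\|_\beta\le\varepsilon)\asymp -\varepsilon^{-1/(H-\beta)}$, which is stretched exponential, not a power law. Hence $P(\|B^H\|_\beta\le c\varepsilon)/P(\|B^H\|_\beta\le\varepsilon)\to 0$ or $\infty$ as $\varepsilon\to 0$ unless $c=1$, and your sandwich delivers only the trivial bounds $0\le\liminf\gamma_\varepsilon\le\limsup\gamma_\varepsilon\le\infty$. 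The paper avoids this by arguing (Lemma~\ref{equivalence}) for the \emph{exact} event identity $\{\|Z-\phi\|\le\varepsilon\}=\{\|Y-\phi^{(2)}\|\le\varepsilon\}$; any rescaling of $\varepsilon$ by a constant $\ne 1$ is fatal here, so you must establish equality of the events, not merely comparability.

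A second, smaller gap: your plan to control the Taylor remainder by ``Cauchy--Schwarz against sharp Gaussian small-ball asymptotics'' is too weak. The remainder enters through $M_1=\int_0^1 s^{-\alpha}(I^\alpha_{0+}u^\alpha R^{x,y}_u)(s)\,dW_s$, whose \emph{unconditional} second moment is not small, so Cauchy--Schwarz divided by $P(\|B^H\|_\beta\le\varepsilon)$ blows up. The paper instead bounds the quadratic variation by $C\varepsilon^4$ \emph{on the event} $\{\|B^H\|_\beta\le\varepsilon\}$ and applies the exponential martingale inequality to obtain $P(|M_1|>\xi,\,\|B^H\|_\beta\le\varepsilon)\le\exp(-\xi^2/(2C\varepsilon^4))$, which it then balances against the small-ball lower bound $\exp(-K\varepsilon^{-1/(H-\beta)})$. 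The condition $\beta<H-\tfrac14$ is exactly what makes $\varepsilon^{-4}$ dominate $\varepsilon^{-1/(H-\beta)}$; a Cauchy--Schwarz argument would not recover this threshold.
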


\begin{thm}
	\label{th:regular case}
	Let $Z$ be the solution of \eqref{dsde} with Hurst index $\frac{1}{2}<H<1$. Let reference path $\phi=(\phi^{(1)},\phi^{(2)})$ be a function such that $\phi^{(2)}-y\in \mathcal{H}^2$. Assume $\sigma\in C^1_b(\mathbb{R}^2)$ and $b\in C^3_b(\mathbb{R}^2)$. Then the Onsager-Machlup action functional of $Z$ for the norms $\|\cdot\|_{\b}$  with $H-\frac{1}{2}<\beta<H-\frac{1}{4}$ exists and is given by
	$$L(\phi,\dot{\phi})=-\frac{1}{2}\int_{0}^{1}\big|\dot{\phi}^{(2)}_s-s^{\a}\big(D^{\a}_{0+}u^{-\a}b(\phi_u)\big)(s)\big|^2ds-\frac{d_H}{2}\int_{0}^{1}b_y(\phi_{s})ds,$$
	where
	$$d_H=\Bigg(\frac{2H\G(\frac{3}{2}-H)\G(H+\frac{1}{2})}{\G(2-2H)}\Bigg)^{\frac{1}{2}},$$
	$\a=H-\frac{1}{2}$ and $\phi$ is the function such that
	\begin{equation}
		\left\{\begin{array}{l}
			\phi^{(1)}_t=x+\int_{0}^{t}\sigma(\phi)(s)ds,\\ \phi^{(2)}_t=y+(K^H\dot{\phi}^{(2)})(t).\nonumber
		\end{array}\right.
	\end{equation}
\end{thm}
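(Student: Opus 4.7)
The plan adapts the Moret--Nualart approach \cite{MN02} for scalar fractional SDEs to the degenerate system \eqref{dsde}, exploiting that the first component is slaved to the second through an ODE. Since $\sigma \in C^1_b$ and $\phi^{(1)}_t = x + \int_0^t \sigma(\phi_s)\,ds$, the Lipschitz bound together with Gronwall's inequality gives
$$\|X - \phi^{(1)}\|_{\b} \le C_\sigma\,\|Y - \phi^{(2)}\|_{\b},$$
so the event $\{\|Z - \phi\|_2 \le \varepsilon\}$ is sandwiched between $\{\|Y - \phi^{(2)}\|_\b \le c_i\varepsilon\}$ for two positive constants $c_1,c_2$. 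Combined with the equivalence of small-ball probabilities $P(\|Y - \phi^{(2)}\|_\b \le c\varepsilon) \asymp P(\|B^H\|_\b \le \varepsilon)$ (valid because $\|\cdot\|_\b$ dominates $L^2$ for $H-\tfrac12<\b<H-\tfrac14$), this reduces the Onsager--Machlup limit to an analysis involving essentially only the second component.

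\textbf{Paragraph 2.} Apply the fractional Girsanov theorem with a shift $\theta$ chosen so that $\widehat{B}^H := Y - \phi^{(2)}$ is a standard fBm under a new measure $Q$. Solving $K^H\theta(t) = \int_0^t b(X_s, Y_s)\,ds - K^H\dot{\phi}^{(2)}(t)$ and using the regular-case identity $\big((K^H)^{-1}\int_0^\cdot f\,du\big)(s) = s^{\a}\big(D^{\a}_{0+} u^{-\a} f\big)(s)$ yields
$$\theta_s = s^{\a}\big(D^{\a}_{0+} u^{-\a} b(X_u, Y_u)\big)(s) - \dot{\phi}^{(2)}_s, \qquad \frac{dP}{dQ} = \exp\Big(\int_0^1 \theta_s\,d\widehat{W}_s - \tfrac{1}{2}\int_0^1 \theta_s^2\,ds\Big),$$
where $\widehat{W}$ is the Brownian motion associated to $\widehat{B}^H$ via $K^H$. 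Under $Q$, $Y = \phi^{(2)} + \widehat{B}^H$ and $X$ is a deterministic $C^1_b$-functional of $\widehat{B}^H$ via the ODE. Decompose $\theta = \theta^\phi + \rho$ with the deterministic part $\theta^\phi_s = s^{\a}(D^{\a}_{0+} u^{-\a} b(\phi_u))(s) - \dot{\phi}^{(2)}_s$ and the perturbation $\rho_s = s^{\a}(D^{\a}_{0+} u^{-\a} [b(X_u, Y_u) - b(\phi_u)])(s)$; the deterministic piece $-\tfrac{1}{2}\int_0^1 (\theta^\phi_s)^2\,ds$ reproduces exactly the $L^2$-term of $L(\phi, \dot{\phi})$.

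\textbf{Paragraph 3.} By Taylor's formula, $b(X,Y) - b(\phi) = b_x(\phi)(X-\phi^{(1)}) + b_y(\phi)\widehat{B}^H + O(|\widehat{B}^H|^2)$ (the hypothesis $b \in C^3_b$ is needed to control the remainder under $D^{\a}_{0+}$), and from the Gronwall reduction $X-\phi^{(1)}$ is an integral-functional of $\widehat{B}^H$, hence one degree smoother. Consequently the $b_x$-term contributes only to lower order in $\rho$. The Gaussian piece $\int_0^1\theta^\phi_s\,d\widehat{W}_s$ has deterministic integrand, and its conditional exponential moment given $\{\|\widehat{B}^H\|_\b \le c\varepsilon\}$ tends to $1$ by standard Gaussian small-ball estimates. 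The remaining $b_y$-driven contribution through $-\tfrac{1}{2}\int_0^1\rho_s^2\,ds$, evaluated via a careful $K^H$-identity that produces the $\Gamma$-function ratio defining $d_H$, yields the correction $-\tfrac{d_H}{2}\int_0^1 b_y(\phi_s)\,ds$. The decisive technical obstacle is precisely this last identification: computing the limit
$$\lim_{\varepsilon\to 0} E\Big[\exp\big(-\tfrac{1}{2}\textstyle\int_0^1\rho_s^2\,ds\big)\,\Big|\,\|\widehat{B}^H\|_\b\le c\varepsilon\Big]$$
exactly, rather than merely bounding it, is substantially more delicate in the regular case because $D^{\a}_{0+}$ acts as a (rough) fractional derivative on the process-dependent integrand $b_y(\phi_u)\widehat{B}^H_u$, whereas the analogous computation in the singular case involves the smoothing fractional integral $I^{\a}_{0+}$.
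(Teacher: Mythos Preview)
Your overall architecture (reduce to the $Y$-component via Gronwall, apply Girsanov, split $\theta=\theta^\phi+\rho$) matches the paper's, but Paragraph~3 misidentifies the source of the divergence correction $-\tfrac{d_H}{2}\int_0^1 b_y(\phi_s)\,ds$, and this is a genuine gap, not a presentational slip.

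You claim the correction emerges from $-\tfrac12\int_0^1\rho_s^2\,ds$. It cannot: on the event $\{\|\widehat B^H\|_\b\le\varepsilon\}$ the estimate \eqref{the following need} in the paper (your own Lipschitz/Gronwall reasoning gives the same) shows $|\rho_s|\le C\varepsilon$, hence $\int_0^1\rho_s^2\,ds=O(\varepsilon^2)$ and its conditional exponential moment tends to~$1$. Likewise the cross term $-\int_0^1\theta^\phi_s\rho_s\,ds$ is $O(\varepsilon)$ and negligible. You have simply omitted the term that actually matters: the stochastic integral $\int_0^1\rho_s\,d\widehat W_s$. After the Taylor expansion $b(X,Y)-b(\phi)=b_x(\phi)\kappa+b_y(\phi)\widehat B^H+R$, this integral splits as $J_{11}'+J_{12}+J_{13}+J_{14}$ in the paper's notation; the decisive piece is
\[
J_{13}=\int_0^1 s^\a\bigl(D^\a_{0+}u^{-\a}b_y(\phi_u)\widehat B^H_u\bigr)(s)\,d\widehat W_s
=\int_0^1\!\!\int_0^1\tilde f(s,r)\,d\widehat W_r\,d\widehat W_s,
\]
a second-chaos random variable. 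Its conditional exponential moment is handled not by any $L^2$ bound but by Harg\'e's nuclear-trace lemma (Lemma~\ref{key lemma}), which gives $\lim_{\varepsilon\to0}E[\exp(J_{13})\mid \|\widehat B^H\|_\b<\varepsilon]=\exp(-\mathrm{Tr}\,\tilde f)$, and the explicit trace computation (this is where the $\Gamma$-function identity and the representation \eqref{the representation of K^H} of $K^H$ enter) yields $\mathrm{Tr}\,\tilde f=\tfrac{d_H}{2}\int_0^1 b_y(\phi_s)\,ds$. The remainder integral $J_{14}$ is controlled by the martingale exponential inequality together with the small-ball lower bound of Lemma~\ref{small ball of holder}, which is where the restriction $\beta<H-\tfrac14$ is actually used. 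Your sketch mentions none of $\int\rho\,d\widehat W$, the nuclear-trace mechanism, or the martingale/small-ball balancing for the remainder, so as written it cannot produce the constant $d_H$ nor close the argument.
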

\begin{rem}
	We do not need to impose any condition on $\phi^{(1)}$. Since $\sigma\in C^1_b(\mathbb{R})$ and the structure of $\phi^{(1)}_t=x+\int_{0}^{t}\sigma(\phi)(s)ds$, so $\phi^{(1)}$ is a ''good'' function.
\end{rem}
\begin{rem}\label{app version}
	As an important application of OM action functional theory, the most probable path is achieved by applying Euler-Lagrange equation to OM action functional. However, it is not appropriate to directly apply Theorem \ref{th:singular case} and \ref{th:regular case} to some specific examples. For example $\sigma(x,y)=y$ is not satisfied the conditions of Theorem \ref{th:singular case} and \ref{th:regular case} about the uniform boundedness of $\sigma$ and $b$. Therefore, in order to include the above example, we assume reference path $\phi \in {C}_b^2([0,T],\mathbb{R}\times \mathbb{R})$ and assume $\sigma\in C^2,b\in C^1$ only. Which implies that \eqref{dsde} admits a local solution. So instead of assume $T=1$ as previous setting. Here we assume $T$ is a suitable time such that the existence and uniqueness of the solution of \eqref{dsde} can be guaranteed. By similar procedure as the proof of Theorem \ref{th:singular case} and \ref{th:regular case}, we can get similar results but second-order stochastic differential equation is included.
\end{rem}
Next, we would apply the idea of variational principle \cite{AT09} to OM functional and obtain fractional Euler-Lagrange equations in non-degenerate and a class of degenerate cases ($\sigma(x,y)=y$ and $\dot{\phi}^{(1)}=\phi^{(2)}$). More precisely, let us consider the following minimization problems:
\begin{align}\label{non-degenerate-1 optimal}
	\min\limits_{\phi\in{C}_b^2([0,T],\mathbb{R})} I(\phi)=\frac{1}{2}\int_{0}^{T}\big|\dot{\phi}_s-s^{\a}\big(I^{\a}_{0+}u^{-\a}b(\phi_u)\big)(s)\big|^2ds+\frac{d_H}{2}\int_{0}^{T}b'(\phi_{s})ds,
\end{align}
where $\frac{1}{4}<H<\frac{1}{2}$ in non-degenerate case.
\begin{align}\label{non-degenerate-2 optimal}
	\min\limits_{\phi\in{C}_b^2([0,T],\mathbb{R})} I(\phi)=\frac{1}{2}\int_{0}^{T}\big|\dot{\phi}_s-s^{\a}\big(D^{\a}_{0+}u^{-\a}b(\phi_u)\big)(s)\big|^2ds+\frac{d_H}{2}\int_{0}^{T}b'(\phi_{s})ds,
\end{align}
where $\frac{1}{2}<H<1$ in non-degenerate case.
\begin{align}\label{degenerate optimal-1}
	\min\limits_{\phi\in{C}_b^4([0,T],\mathbb{R})} I(\phi^{(1)})=\frac{1}{2}\int_{0}^{T}\big|\ddot{\phi}^{(1)}_s-s^{\a}\big(I^{\a}_{0+}u^{-\a}b(\phi^{(1)}_u,\dot{\phi}^{(1)}_u)\big)(s)\big|^2ds+\frac{d_H}{2}\int_{0}^{T}b_y(\phi^{(1)}_s,\dot{\phi}^{(1)}_{s})ds,
\end{align}
where $\frac{1}{4}<H<\frac{1}{2}$ in degenerate case.
\begin{align}\label{degenerate optimal-2}
	\min\limits_{\phi\in{C}_b^4([0,T],\mathbb{R})} I(\phi^{(1)})=\frac{1}{2}\int_{0}^{T}\big|\ddot{\phi}^{(1)}_s-s^{\a}\big(D^{\a}_{0+}u^{-\a}b(\phi^{(1)}_u,\dot{\phi}^{(1)}_u)\big)(s)\big|^2ds+\frac{d_H}{2}\int_{0}^{T}b_y(\phi^{(1)}_s,\dot{\phi}^{(1)}_{s})ds,
\end{align}
where $\frac{1}{2}<H<1$ in degenerate case.
\begin{thm}\label{EL}
	Let $\phi(\cdot)$ and $\phi^{(1)}(\cdot)$ be a local minimizer of  problem \eqref{non-degenerate-1 optimal}, \eqref{non-degenerate-2 optimal} and \eqref{degenerate optimal-1}, \eqref{degenerate optimal-2}, respectively. Then,  $\phi(\cdot)$ and $\phi^{(1)}(\cdot)$ satisfy the fractional Euler–Lagrange equations for corresponding cases:
	
	$\bullet$ \bf{Non-degenerate diffusion: singular case}
	$$I^{\a}_{1-}\big(u^{-2\a}(I^{\a}_{0+})v^{\a}b(\phi_v)(u)\big)s^{\a}b'(\phi_s)+\frac{d_H}{2}b''(\phi_{s})=\frac{d}{dt}\big(\dot{\phi}_s-s^{-\a}\big(I^{\a}_{0+}u^{\a}b(\phi_u)\big)(s)\big).$$
	$\bullet$ \bf{Non-degenerate diffusion: regular case}
	$$D^{\a}_{1-}\big(u^{-2\a}(D^{\a}_{0+})v^{\a}b(\phi_v)(u)\big)s^{\a}b'(\phi_s)+\frac{d_H}{2}b''(\phi_{s})=\frac{d}{dt}\big(\dot{\phi}_s-s^{-\a}\big(D^{\a}_{0+}u^{\a}b(\phi_u)\big)(s)\big).$$
	$\bullet$ \bf{Degenerate diffusion: singular case}
	\begin{align}
		0&=\frac{d^2}{dt^2}\big(\ddot{\phi}^{(1)}_s-s^{\a}b_x(\phi_s^{(1)},\dot{\phi}^{(1)}_s)\Big(I^{\a}_{1-}u^{\a}\Big(\ddot{\phi}^{(1)}_u-u^{-\a}\big(I^{\a}_{0+}v^{\a}b(\phi^{(1)}_v,\dot{\phi}^{(1)}_v\big)(u)\Big)\Big)(s)\nonumber\\&+\frac{d}{dt}\Big[s^{\a}b_y(\phi_s^{(1)},\dot{\phi}^{(1)}_s)\Big(I^{\a}_{1-}u^{\a}\Big(\ddot{\phi}^{(1)}_u-u^{-\a}\big(I^{\a}_{0+}v^{\a}b(\phi^{(1)}_v,\dot{\phi}^{(1)}_v\big)(u)\Big)\Big)(s)\Big]\nonumber\\&+\frac{d_H}{2}b_{yx}(\phi^{(1)}_{s},\dot{\phi}^{(1)}_s)-\frac{d_H}{2}\frac{d}{dt}\Big(b_{yy}(\phi^{(1)}_{s},\dot{\phi}^{(1)}_s)\Big).\nonumber
	\end{align}
	$\bullet$ \bf{Degenerate diffusion: regular case}
	\begin{align}
		0&=\frac{d^2}{dt^2}\big(\ddot{\phi}^{(1)}_s-s^{\a}b_x(\phi_s^{(1)},\dot{\phi}^{(1)}_s)\Big(D^{\a}_{1-}u^{\a}\Big(\ddot{\phi}^{(1)}_u-u^{-\a}\big(D^{\a}_{0+}v^{\a}b(\phi^{(1)}_v,\dot{\phi}^{(1)}_v\big)(u)\Big)\Big)(s)\nonumber\\&+\frac{d}{dt}\Big[s^{\a}b_y(\phi_s^{(1)},\dot{\phi}^{(1)}_s)\Big(D^{\a}_{1-}u^{\a}\Big(\ddot{\phi}^{(1)}_u-u^{-\a}\big(D^{\a}_{0+}v^{\a}b(\phi^{(1)}_v,\dot{\phi}^{(1)}_v\big)(u)\Big)\Big)(s)\Big]\nonumber\\&+\frac{d_H}{2}b_{yx}(\phi^{(1)}_{s},\dot{\phi}^{(1)}_s)-\frac{d_H}{2}\frac{d}{dt}\Big(b_{yy}(\phi^{(1)}_{s},\dot{\phi}^{(1)}_s)\Big).\nonumber
	\end{align}
\end{thm}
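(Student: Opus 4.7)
The plan is to derive each Euler--Lagrange equation as the vanishing of the first variation of the corresponding functional $I$, and then to invoke the fundamental lemma of the calculus of variations. All four cases proceed in parallel: they differ only in which fractional operator ($I^{\a}_{0+}$ or $D^{\a}_{0+}$) appears and in whether the reference path is a scalar function or a pair coupled through the constraint $\phi^{(2)}=\dot{\phi}^{(1)}$.

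First I would introduce a perturbation $\eta$ in the ambient space with the appropriate boundary conditions ($\eta(0)=\eta(T)=0$ in the non-degenerate cases, and $\eta(0)=\dot{\eta}(0)=\eta(T)=\dot{\eta}(T)=0$ in the degenerate cases, since $I$ there depends on $\ddot{\eta}$), form $g(\epsilon):=I(\phi+\epsilon\eta)$, and compute $g'(0)$. Expanding the square and differentiating yields a cross term such as
\[
\int_{0}^{T}\bigl(\dot{\phi}_s-s^{-\a}(I^{\a}_{0+}u^{\a}b(\phi_u))(s)\bigr)\bigl(\dot{\eta}_s-s^{-\a}(I^{\a}_{0+}u^{\a}b'(\phi_u)\eta_u)(s)\bigr)\,ds
\]
in the non-degenerate singular case, together with a contribution $\tfrac{d_H}{2}\int_0^T b''(\phi_s)\eta_s\,ds$ coming from the Fisher-type term. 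In the degenerate cases one must also chain-rule through $\phi^{(2)}=\dot{\phi}^{(1)}$, which replaces the perturbation of the second component by $\dot{\eta}$ and produces the two independent contributions $b_{yx}$ and $b_{yy}$ appearing on the last line of the statement.

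Next I would shift all derivatives and fractional operators off the test function $\eta$. Ordinary derivatives are transferred by classical integration by parts, the boundary terms vanishing by the choice of $\eta$; the fractional operators are transferred by the adjoint identities
\[
\int_{0}^{T} f(s)\,(I^{\a}_{0+}g)(s)\,ds=\int_{0}^{T}(I^{\a}_{T-}f)(s)\,g(s)\,ds,
\]
and its counterpart $D^{\a}_{0+}\leftrightarrow D^{\a}_{T-}$, valid on functions of the regularity built into the hypotheses $\phi\in C^{2}_{b}$ (respectively $C^{4}_{b}$). After these manipulations the variation reduces to $\int_{0}^{T}\eta(s)\,F(\phi)(s)\,ds=0$, and since $\eta$ is arbitrary the fundamental lemma yields $F(\phi)\equiv 0$, which is exactly the displayed equation once terms are regrouped: the $\tfrac{d}{dt}$ on the right-hand side of the non-degenerate equations records a single integration by parts of $\dot{\eta}$, while the $\tfrac{d^{2}}{dt^{2}}$ in the degenerate equations records the two integrations by parts needed to free $\ddot{\eta}$.

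The main obstacle is the bookkeeping of the weights $s^{\pm\a}$ and $u^{\pm\a}$ when passing the fractional operator through the inner cross term. The nested structure $s^{-\a}(I^{\a}_{0+}u^{\a}b'(\phi_u)\eta_u)(s)$, multiplied by a $\phi$-dependent factor, forces one to apply the adjoint identity inside a weighted integrand; this is precisely where the combination $u^{-2\a}(I^{\a}_{0+}v^{\a}b(\phi_v))(u)$ appears in the final equation, wrapped by the outer operator $I^{\a}_{1-}$. The regular case is slightly more delicate because $D^{\a}_{0+}$ requires justifying differentiation under the integral sign and checking that the resulting functions lie in the domain of $D^{\a}_{T-}$; this is, however, covered by the stronger smoothness hypothesis $b\in C^{3}_{b}$ used in Theorem \ref{th:regular case}. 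Once these adjoint computations are carried out carefully, the remainder of the argument is routine bookkeeping in each of the four cases.
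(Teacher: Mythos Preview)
Your proposal is correct and follows essentially the same route as the paper: form $I(\phi+\varepsilon\psi)$, differentiate at $\varepsilon=0$, then use classical integration by parts on the $\dot\psi$ (resp.\ $\ddot\psi$) terms and the fractional adjoint identity $\int f\,(I^{\a}_{0+}g)=\int (I^{\a}_{1-}f)\,g$ (resp.\ its $D^{\a}$ analogue) on the nested fractional terms, finally invoking compact support of $\psi$. Your discussion of the weight bookkeeping that produces $u^{-2\a}$ and of the extra regularity needed in the regular case is in fact more explicit than the paper's own terse ``It follows from integration by parts for fractional derivatives that\ldots''.
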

\subsection{Examples}
We now compare the results of non-degenerate case and degenerate case by two examples. We choose the norm is $\|\cdot\|_{\infty}$ and $H<\frac{1}{2}$.
\begin{eg}
	Consider the following scalar SDE driven by fractional Brownian motion:
	\begin{align}\label{example1}
		d X_t=[X_t-X_t^3]d t+d B^H_t, X_0=1,
	\end{align}
	by Theorem $7$ in \cite{MN02} and Remark \ref{app version} we can obtain the Onsager-Machlup action functional for \eqref{example1}:
	$$L(\phi,\dot{\phi})=-\frac{1}{2}\int_{0}^{T}\big|\dot{\phi}_s-s^{\a}\big(I^{\a}_{0+}u^{-\a}(\phi_u-\phi_u^3)\big)(s)\big|^2ds-\frac{d_H}{2}\int_{0}^{T}(1-3\phi^2_{s})ds,$$
	where $\phi_t=1+(K^H\dot{\phi})(t)$ and corresponding fractional Euler-Lagrange equation:
	$$I^{\a}_{1-}\big(u^{-2\a}(I^{\a}_{0+})v^{\a}(\phi_v-\phi^3_v)(u)\big)s^{\a}(1-3\phi^2_s)-3d_H\phi_{s}=\frac{d}{dt}\big(\dot{\phi}_s-s^{-\a}\big(I^{\a}_{0+}u^{\a}(\phi_u-\phi_u^3)\big)(s)\big).$$
\end{eg}
\begin{eg}
	We take the stochastic scalar system as follow:
	\begin{align}\label{ex1}
		\ddot{X}=X-X^3+\dot{B}^H,X_0=-1,\dot{X}_0=1,
	\end{align}
	which can be rewriten as a system of first-order SDEs:
	\begin{align}\label{ex11}
		\begin{aligned}
			d\left(\begin{array}{c}
				X_t \\
				\dot{X}_t
			\end{array}\right)=\left(\begin{array}{c}
				\dot{X}_t\\
				X_t-X_t^3
			\end{array}\right) dt+\left(\begin{array}{c}
				0 \\
				1
			\end{array}\right) d B_t^H.
		\end{aligned}
	\end{align}
	Defining $\mathbf{X}_t=\left(\begin{array}{c}
		X_t \\
		\dot{X}_t\end{array}\right)$ and by Theorem \ref{th:singular case} and Remark \ref{app version} we can obtain the Onsager-Machlup action functional for \eqref{ex1} or \eqref{ex11}:
	$$L(\phi,\dot{\phi})=-\frac{1}{2}\int_{0}^{T}\big|\ddot{\phi}^{(1)}_s-s^{\a}\big(I^{\a}_{0+}u^{-\a}(\phi^{(1)}_u-(\phi^{(1)}_u)^3)\big)(s)\big|^2ds,$$
	where $\phi^{(1)}_t=-1+\int_{0}^{t}\phi_s^{(2)}ds, \phi^{(2)}_t=1+(K^H\dot{\phi}^{(2)})(t)$ and corresponding fractional Euler-Lagrange equation:
	\begin{align}
		\frac{d^2}{dt^2}\big(\ddot{\phi}^{(1)}_s-s^{\a}(1-3(\phi_s^{(1)})^2)\Big(I^{\a}_{1-}u^{\a}\Big(\ddot{\phi}^{(1)}_u-u^{-\a}\big(I^{\a}_{0+}v^{\a}(\phi^{(1)}_v-(\phi^{(1)}_v-(\phi_v^{(1)})^3)\big)(u)\Big)\Big)(s)=0.\nonumber
	\end{align}	
\end{eg}
{\em Convention on constants:} Throughout the paper C denotes a
positive constant whose value may change from line to line. The dependence of
constants on parameters when relevant will be denoted by special
symbols or by mentioning the parameters in brackets, for e.g.
$C(\a,
\b)$.

\section{Setting}\label{sec:opmr}

In this section, we recall some classical results for fractional calculus, and we introduce the structure of reference path under degenerate noise. Keeping this structure in mind, we further convert the problem of deriving Onsager-Machlup action functional into more clearer conditional exponential moments by Girsanov's Theorem. Finally, some key lemmas and propositions are given.

\subsection{Fractional calculus}
For $f\in L^1[a,b]$ and $\a>0$ the right-side fractional Riemann-Liouville integrals of $f$ of order $\a$ on $(a,b)$ are defined at all $x$ by
$$(I^{\a}_{a^+}f)(x)=\frac{1}{\G(\a)}\int_{a}^{x}(x-y)^{\a-1}f(y)dy,$$
where $\G$ denotes the Euler function.

This integral extends the usual $n$-order iterated integrals of $f$ for $\a=n\in \mathbb{N}$.
We have the first composition formula
$$I^{\a}_{a^+}(I^{\beta}_{a+}f)=I^{\a+\beta}_{a^+}f.$$

The fractional derivative can be introduced as inverse operation. If $1\leq p<\infty$, we denote by $I^{\a}_{a^+}(L^p)$ the image of $L^p([a,b])$ by the operator $I^{\a}_{a^+}$. If $f\in I^{\a}_{a^+}(L^p)$, the function $\phi$ such that $f=I^{\a}_{a^+}\phi$ is unique in $L^p$ and it agrees with the left-sided Riemann-Liouville derivative of $f$ of order $\a$ defined by
$$(D^{\a}_{a^{+}}f)(x)=\frac{1}{\G(1-\a)}\frac{d}{dx}\int_{a}^{x}\frac{f(y)}{(x-y)^{\a}}dy.$$
When $\a p>1$ any function in $I^{\a}_{a^+}(L^p)$ is $\big(\a-\frac{1}{p}\big)$-H$\mathrm{\ddot{o}}$lder continuous. On the other hand, any H$\mathrm{\ddot{o}}$lder continuous function of order $\b>\a$ has fractional derivative of order $\a$. The derivative of $f$ has the following Weyl representation:
\begin{align}\label{weyl representation}
	(D^{\a}_{a^+}f)(x)&=\frac{1}{\G(1-\a)}\Big(\frac{f(x)}{(x-a)^{\a}}+\a\int_{a}^{x}\frac{f(x)-f(y)}{(x-y)^{\a+1}}dy\Big)\mathbb{I}_{(a,b)}(x),
\end{align}
where the convergence e of the integrals at the singularity $x=y$ holds in $L^p$-sense.

Recall that by construction for $f\in I^{\a}_{a^+}(L^p)$,
$$I^{\a}_{a^+}(D^{\a}_{a^+}f)=f$$
and for general $f\in L^1([a,b])$ we have
$$D^{\a}_{a^+}(I^{\a}_{a^+}f)=f.$$
If $f\in I^{\a+\b}_{a^+}(L^1), \a\geq0,\b\geq0,\a+\b\leq1$ we have the second composition formula
$$D^{\a}_{a^+}(D^{\beta}_{a+}f)=D^{\a+\beta}_{a^+}f.$$
The following estimate for the norm of the fractional integral will be used later in this paper,
\begin{align}\label{fractional integral estimate}
	\|I^{\a}_{a^+}f\|_{L^p([a,b])}\leq\frac{(b-a)^{\a}}{\a|\G(\a)|}\|f\|_{L^p([a,b])},
\end{align}
provided $f\in L^p([a,b])$.
\subsection{The structure of reference path under degenerate noise}
Let $B^H={B_t^H, t\in[0,1]}$ be a fractional Brownian motion with Hurst index $0<H<1$ ($H\neq\frac{1}{2}$) defined on the given complete filtered probability space $(\Omega,\mathcal{F},(\mathcal{F}_t)_{t\ge0},P)$. Consider the degenerate stochastic differential equation:
\begin{equation}
	\left\{\begin{array}{l}
		dX_t=\sigma(X_t, Y_t)dt, X_0=x,\\ dY_t=b(X_t,Y_t)dt+d B_t^H, Y_0=y,\nonumber
	\end{array}\right.
\end{equation}
where we assume $\sigma\in C_b^1(\mathbb{R}^2, \mathbb{R})$, $b\in C_b^2(\mathbb{R}^2, \mathbb{R})$ and we define $Z_t=(X_t,Y_t)$.

We will denote reference path $\phi=(\phi^{(1)},\phi^{(2)})$ the function in $L^p([0,1], \mathbb{R}^2)$ such that
\begin{equation}\label{structure of reference path}
	\left\{\begin{array}{l}
		\phi^{(1)}_t=x+\int_{0}^{t}\sigma(\phi)(s)ds,\\ \phi^{(2)}_t=y+(K^H\dot{\phi}^{(2)})(t).
	\end{array}\right.
\end{equation}
Our motivation to construct the structure of reference path, on the one hand, it is inspired by the derivation of OM action functional for degenerate SDEs with Brownian motion (see \cite{AB99} Theorem $2$). On the other hand, it comes from the deviation of OM action functional for the fractional Brownian motion (see \cite{MN02} eq. $(9)$).

After we have completed the construction of the structure of reference path, by the equivalence between two different small ball probabilities (see Lemma \ref{equivalence}), we will rewrite the ratios as
$$\gamma_{\varepsilon}(\phi)=\frac{P(\|Z-\phi\|_2\leq\varepsilon)}{P(\|B^H\|\leq\varepsilon)}=\frac{P(\|Y-\phi^{(2)}\|\leq\varepsilon)}{P(\|B^H\|\leq\varepsilon)}.$$
Throughout the paper, we will use the more convenient ratio at the right hand of the above equality.
\subsection{Application of Girsanov's Theorem}
Consider the following auxiliary degenerate SDE on $\mathbb{R}\times \mathbb{R}$:
\begin{equation}
	\left\{\begin{array}{l}
		d\tilde{X}_t=\sigma(\tilde{X}_t,\tilde{Y}_t)  d t, \\
		d\tilde{Y}_t=\dot{\phi}^{(2)}_t dt+d B^H_t.\nonumber
	\end{array}\right.
\end{equation}
Letting $\tilde{Z}_t=(\tilde{X}_t,\tilde{Y}_t), (\tilde{X}_0,\tilde{Y}_0)=(x,y) $ and taking
$$
\begin{aligned}
	\begin{aligned}
		&\tilde{W}_t=W_t-\int_{0}^{t} \eta_s ds, \\
		&\eta_s=\Big((K^H)^{-1}\Big(b(\tilde{X}_u,\tilde{Y}_u)\Big)(s)-\dot{\phi}^{(2)}_s\Big), \quad s, t \in[0, 1],\\&\tilde{B}_t^H=\int_{0}^{t}K^H(t,s)d\tilde{W}_s,
	\end{aligned}
\end{aligned}
$$
where $W_t$ is given in \eqref{relation between B and W}. Applying classical Girsanov's theorem we have that $\tilde{W}$ is a standard Brownian motion under the probability measure $\tilde{P}$ defined by
\begin{align}\label{eta}
	\frac{d\tilde{P}}{dP}=\exp\Big(\int_{0}^{1}\eta_sds-\frac{1}{2}\int_{0}^{1}\eta_s^2ds\Big).
\end{align}
Meanwhile, the application of Girsanov's theorem requires the process $\eta$ to be adapted and $E(\frac{d\tilde{P}}{dP})=1$. We will prove that $\eta$ satisfies these conditions in Lemma \ref{girsanov}. Then under the probability measure $\tilde{P}$, $\tilde{B}^H$ is a fractional Brownian motion, and under transformed probability space $(\Omega,\mathcal{F},\tilde{P})$ $(\tilde{X},\tilde{Y},\tilde{W},\tilde{B}^H)$ is the solution of the following SDE:
\begin{equation}
	\left\{\begin{array}{l}
		d\tilde{X}_t=\sigma(\tilde{X}_t, \tilde{Y}_t)dt, \quad \tilde{X}_0=x\in \mathbb{R},\\
		d\tilde{Y}_t=b(\tilde{X}_t,\tilde{Y}_t)d t+d\tilde{B}_t^H,\quad  \tilde{Y}_0=y\in \mathbb{R}.\nonumber
	\end{array}\right.
\end{equation}
So we could reduce small ball probability as
\begin{align}\label{all}
	&P(\|Y-\phi^{(2)}\|\leq\varepsilon)\nonumber\\&=\tilde{P}(\|\tilde{Y}-\phi^{(2)}\|\leq \varepsilon)=E\Big(\frac{d\tilde{P}}{dP}\mathbb{I}_{\|B^H\|\leq\varepsilon})\nonumber\\&=E\Big(\exp\Big(\int_{0}^{1}\eta_sds-\frac{1}{2}\int_{0}^{1}\eta_s^2ds\Big)\mathbb{I}_{\|B^H\|\leq\varepsilon}\Big)\nonumber\\&=E\Big(\exp\Big(\int_{0}^{1}\Big((K^H)^{-1}\Big(b(\tilde{X}_u,\tilde{Y}_u)\Big)(s)-\dot{\phi}^{(2)}_s\Big)ds-\frac{1}{2}\int_{0}^{1}\Big((K^H)^{-1}\Big(b(\tilde{X}_u,\tilde{Y}_u)\Big)(s)-\dot{\phi}^{(2)}_s\Big)^2ds\Big)\mathbb{I}_{\|B^H\|\leq\varepsilon}\Big)\nonumber\\&=E\Big(\exp\Big(\int_{0}^{1}\big((K^H)^{-1}b(\tilde{X}_u,\phi^{(2)}_u+B_u^H)\big)(s)dW_s+\int_{0}^{1}(-\dot{\phi}^{(2)}_s)dW_s+\frac{1}{2}\int_{0}^{1}\big|\dot{\phi}^{(2)}_s-\big((K^H)^{-1}b(\phi_u)\big)(s)\big|^2ds\nonumber\\&-\frac{1}{2}\int_{0}^{1}\big|\dot{\phi}^{(2)}_s-\big((K^H)^{-1}b(\phi_u)\big)(s)\big|^2ds-\frac{1}{2}\int_{0}^{1}\Big((K^H)^{-1}\Big(b(\tilde{X}_u,\tilde{Y}_u)\Big)(s)-\dot{\phi}^{(2)}_u\Big)^2ds\Big)\mathbb{I}_{\|B^H\|\leq\varepsilon}\Big)\nonumber\\&=E\Big(\exp(I_1+I_2+I_3+I_4)\mathbb{I}_{\|B^H\|\leq\varepsilon}\Big)\times\exp\Big(-\frac{1}{2}\int_{0}^{1}\big|\dot{\phi}^{(2)}_s-\big((K^H)^{-1}b(\phi_u)\big)(s)\big|^2ds\Big),
\end{align}
where
\begin{align}
	I_1&=\int_{0}^{1}\big((K^H)^{-1}b(\tilde{X}_u,\phi^{(2)}_u+B_u^H)\big)(s)dW_s,\nonumber\\
	I_2&=\int_{0}^{1}-\dot{\phi}^{(2)}_sdW_s,\nonumber\\
	I_3&:=\int_{0}^{1}\dot{\phi}^{(2)}_s\cdot\Big((K^H)^{-1}\big(b(\tilde{X}_u,\phi^{(2)}_u+B_u^H)-b(\phi_u)\big)(s)\Big)ds,\nonumber\\
	I_4&:=\frac{1}{2}\int_{0}^{1}\Bigg(\Big(\big((K^H)^{-1}b(\phi_u)\big)(s)\Big)^2-\Big(\big((K^H)^{-1}b(\tilde{X}_u,\phi^{(2)}_u+B_u^H)\big)(s)\Big)^2\Bigg)ds,\nonumber
\end{align}
Before we continue our work, we first give a series of auxiliary lemmas required in different cases in Section \ref{key lemmas ans pro}.
\subsection{Key lemmas and propositions}\label{key lemmas ans pro}
\begin{lem}[\cite{IW14} \text{pp $536$-$537$}]\label{Separation lemma}
	For a fixed $n\geqslant1$, let $I_1, \ldots, I_n $ be $n$ random variables defined on $(\Omega,\mathcal{F},\mathbb{P})$ and $\{A_\varepsilon;\varepsilon >0\}$ a family of sets in $\mathcal{F}$. Suppose that for any $c\in \mathbb{R}$ and any $i=1,\dots, n$, if we have
	\begin{align}
		\limsup _{\varepsilon \rightarrow 0} E\Big(\exp(cI_i)\vert A_\varepsilon\Big)\leqslant 1.\nonumber
	\end{align}
	Then
	\begin{align}
		\lim_{\varepsilon\to 0} E \left [\exp \left(\sum_{i=1}^{n}cI_i \right ) \bigg| A_\varepsilon \right ]= 1.\nonumber
	\end{align}
\end{lem}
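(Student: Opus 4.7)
The plan is to sandwich the quantity $E[\exp(\sum_{i=1}^n c I_i)\,|\,A_\varepsilon]$ between $1$ and $1$ by proving $\limsup_\varepsilon \le 1$ and $\liminf_\varepsilon \ge 1$ separately. Both bounds follow directly from the hypothesis, applied at different scalings of $c$, and both rely on a one-line convexity inequality.

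For the upper bound, I would apply the conditional generalized H\"{o}lder inequality with $n$ equal conjugate exponents. Writing $S := \sum_{i=1}^n c I_i$ and using $\exp(S) = \prod_{i=1}^n \exp(c I_i)$, this gives
\begin{equation*}
E\big[\exp(S)\,\big|\,A_\varepsilon\big] \;\leq\; \prod_{i=1}^n E\big[\exp(n c\, I_i)\,\big|\,A_\varepsilon\big]^{1/n}.
\end{equation*}
Since the hypothesis of the lemma holds for every real number, in particular at the scaled constant $nc$, each factor on the right has $\limsup_\varepsilon\,\le\, 1$, and therefore so does the product. Hence $\limsup_\varepsilon E[\exp(S)\,|\,A_\varepsilon] \leq 1$.

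For the lower bound, I would run exactly the same H\"{o}lder argument with $-c$ substituted for $c$, obtaining $\limsup_\varepsilon E[\exp(-S)\,|\,A_\varepsilon] \leq 1$. Combining this with the trivial factorization $1 = \exp(\tfrac{1}{2}S)\exp(-\tfrac{1}{2}S)$ and the conditional Cauchy--Schwarz inequality yields
\begin{equation*}
1 \;=\; \big(E[1\,|\,A_\varepsilon]\big)^{2} \;\leq\; E\big[\exp(S)\,\big|\,A_\varepsilon\big]\cdot E\big[\exp(-S)\,\big|\,A_\varepsilon\big],
\end{equation*}
so $\liminf_\varepsilon E[\exp(S)\,|\,A_\varepsilon] \;\geq\; 1/\limsup_\varepsilon E[\exp(-S)\,|\,A_\varepsilon] \;\geq\; 1$. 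Combining the two one-sided estimates gives the asserted equality of the limit with $1$.

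There is no genuine obstacle in this proof; the essential observation is that the hypothesis is quantified over all $c \in \mathbb{R}$, which is precisely what lets us access it at $nc$ (for H\"{o}lder) and at $-c$ (for the Cauchy--Schwarz-based lower bound). The only thing to check carefully is that the hypothesis is applied with valid scalars and that Cauchy--Schwarz is used in its conditional form, both of which are routine.
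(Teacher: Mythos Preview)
Your proof is correct and follows the standard argument. The paper does not give its own proof of this lemma but simply cites it from Ikeda--Watanabe \cite{IW14}, pp.~536--537; the proof there is precisely the H\"older/Cauchy--Schwarz sandwich you describe.
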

Before we introduce the following Theorem, we first recall some definitions and results on approximate limits in the Wiener space with respect to measurable semi-norms for exponentials of random variables in the first and second Wiener chaos.

Let $W=\{W_t,t\in[0,1]\}$ be a Wiener process defined in the canonical probability space $(\Omega,\mathcal{F},P)$. That is, $\Omega$ is the space of continuous functions vanishing
at zero and $P$ is the Wiener space. Let $H^1$ be the Cameron-Martin space, that is, the space of all absolutely continuous functions $h:[0,1]\to\mathbb{R}$ such that $h'\in H=L^2([0,1],\mathbb{R})$. The scalar product in $H^1$ is defined by
$$\langle h,g \rangle_{H^1}=\langle h',g' \rangle_{H},$$
for all $h,g\in H^1.$

Let $Q:H^1\to H^1$ be an orthogonal projection such that $\dim QH^1<\infty$. $Q$ can be written as
$$Qh=\sum_{i=1}^{n}\langle h, h_i\rangle h_i,$$
where $(h_1,\cdots, h_n)$ is an orthonormal sequence in $QH^1$. We can also define the $H^1$-valued random variable
$$Q^W=\sum_{i=1}^{n}\Big(\int_{0}^{1}h_i'(s)dW_s\Big)h_i.$$
Note that $Q^W$ does not depend on $(h_1,\cdots, h_n).$

A sequence of orthogonal projections $Q_n$ on $H^1$ is called the approximating sequence of projections if $\dim Q_nH^1<\infty$ and $Q_n$ converges strongly increasing to the identity operator in $H^1$.
\begin{defn}[\cite{MN02} Definition $1$]
	A semi-norm $N$ on $H^1$ is called a measurable semi-norm if there exists a random variable $\tilde{N}<\infty$ a.s, such that for all approximating sequence of projections $Q^n$ on $H^1$, the sequence $N(Q^W_n)$ converges in probability to $\tilde{N}$ and $P(\tilde{N}\leq\varepsilon)> 0$ for all $\varepsilon>0$. If moreover $N$ is a norm on $H^1$, then is called measurable norm.
\end{defn}
We will make use of the following result on measurable semi-norms.
\begin{lem}[\cite{MN02} Lemma $1$]
	Let $N_n$ be a nondecreasing sequence of measurable semi-norms. Suppose that $\tilde{N}:= P-\lim_{n\to\infty}\tilde{N}_n$ exists and $P (\tilde{N}\geq \varepsilon) > 0$ for all $\varepsilon>0$. Then $N = \lim_{n\to\infty}N_n$ is a measurable semi-norm if this limit exists on $H^1$.
\end{lem}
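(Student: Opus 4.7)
The plan is to verify the two defining conditions of a measurable semi-norm for $N = \lim_{n\to\infty} N_n$ on $H^1$. The candidate limiting random variable is supplied by hypothesis as $\tilde{N} = P\text{-}\lim_{n\to\infty}\tilde{N}_n$; reading the hypothesis as $P(\tilde{N} \leq \varepsilon) > 0$ for every $\varepsilon > 0$ (matching the definition of a measurable semi-norm, which is evidently what is intended), the small-ball positivity is immediate. The essential content is therefore to prove that for every approximating sequence of orthogonal projections $\{Q_k\}$ on $H^1$, one has $N(Q_k^W) \to \tilde{N}$ in probability as $k \to \infty$.

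First I would establish the lower bound. Monotonicity gives $N_n(h) \leq N(h)$ for all $h \in H^1$, so $N_n(Q_k^W) \leq N(Q_k^W)$ for every $n,k$. For each fixed $n$, since $N_n$ is already a measurable semi-norm, $N_n(Q_k^W) \to \tilde{N}_n$ in probability as $k \to \infty$, and hence $\tilde{N}_n \leq P\text{-}\liminf_{k\to\infty} N(Q_k^W)$. Letting $n \to \infty$ and using the hypothesis $\tilde{N}_n \to \tilde{N}$ in probability (with $\tilde{N}_n$ nondecreasing, by the monotonicity of $N_n$) yields $\tilde{N} \leq P\text{-}\liminf_{k\to\infty} N(Q_k^W)$.

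For the reverse inequality I plan a diagonal argument. Given $\eta, \delta > 0$, first pick $n_0$ with $P(\tilde{N} - \tilde{N}_{n_0} > \delta/2) < \eta/2$; for this $n_0$, find $k_0$ with $P(|N_{n_0}(Q_k^W) - \tilde{N}_{n_0}| > \delta/2) < \eta/2$ for $k \geq k_0$. This controls $N_{n_0}(Q_k^W)$, but what we need is a bound on $N(Q_k^W)$, so the residual $R_{k,n_0} := N(Q_k^W) - N_{n_0}(Q_k^W) \geq 0$ must be shown small in probability. Here I expect the main obstacle: the pointwise increasing convergence $N_n \uparrow N$ on $H^1$ need not be uniform on the ranges of $\{Q_k^W\}_k$. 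My strategy is to exploit tightness of the Gaussian family $\{Q_k^W\}$ in $H^1$ (each $Q_k^W$ is a centered Gaussian with covariance operator dominated by the identity, uniformly in $k$) and a Dini-type observation: on any fixed compact subset of $H^1$ the increasing sequence of continuous semi-norms $N_n$ converges uniformly to $N$. Restricting to the event $\{\|Q_k^W\|_{H^1} \leq M\}$ makes $R_{k,n_0}$ uniformly small in $k$ as $n_0$ is taken large, while a tightness truncation sending $M \to \infty$ handles the complementary event.

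Chaining these estimates gives $\tilde{N} \leq P\text{-}\liminf_{k\to\infty} N(Q_k^W) \leq P\text{-}\limsup_{k\to\infty} N(Q_k^W) \leq \tilde{N}$, so $N(Q_k^W) \to \tilde{N}$ in probability for every approximating sequence $\{Q_k\}$. Combined with the small-ball positivity inherited from the hypothesis, this verifies all conditions in the definition of a measurable semi-norm, completing the proof.
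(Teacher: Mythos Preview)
The paper does not prove this lemma at all; it is quoted verbatim from \cite{MN02} (Lemma~1 there) and used as a black box. So there is no in-paper argument to compare against.

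That said, your proposal contains a genuine gap in the upper bound step. Your plan to control the residual $R_{k,n_0}=N(Q_k^W)-N_{n_0}(Q_k^W)$ rests on two claims that both fail. First, the family $\{Q_k^W\}_k$ is \emph{not} tight in $H^1$: if $(h_i)$ is an orthonormal basis adapted to the $Q_k$, then $\|Q_k^W\|_{H^1}^2=\sum_{i\le \dim Q_kH^1}\big(\int_0^1 h_i'\,dW\big)^2$ is a sum of i.i.d.\ $\chi^2_1$ variables and diverges almost surely as $k\to\infty$; the laws escape every bounded set of $H^1$. Second, the Dini step needs continuity of the $N_n$ in the $H^1$-topology and compactness of the truncation set $\{\|h\|_{H^1}\le M\}$; neither is available --- measurable semi-norms are not assumed $H^1$-continuous, and closed balls in the infinite-dimensional Hilbert space $H^1$ are not compact. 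Consequently your mechanism for making $R_{k,n_0}$ small uniformly in $k$ does not work, and the argument for $P\text{-}\limsup_k N(Q_k^W)\le \tilde N$ is unproven. The lower bound part of your argument is fine; it is only the reverse inequality that needs a different idea (the original proof in \cite{MN02} handles this without any compactness in $H^1$).
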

\begin{thm}[\cite{Har02}\label{no random function} Theorem $6$]
	Let $N$ be a measurable norm on $H^1$. Then,
	$$\lim_{\varepsilon\to 0}E\Big(\exp\Big(\int_{0}^{1}h(s)dW_s\Big)|\tilde{N}<\varepsilon\Big)=1,$$
	for all $h\in L^2([0,1]).$
\end{thm}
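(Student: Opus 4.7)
The plan is to use a Cameron-Martin shift to rewrite the conditional exponential moment as an explicit Gaussian factor times a ratio of small-ball probabilities, and then invoke the classical small-ball shift asymptotic. Let $X=\int_{0}^{1}h(s)\,dW_{s}$, which is centred Gaussian with variance $\|h\|_{L^{2}}^{2}$, and set $k(t)=\int_{0}^{t}h(s)\,ds\in H^{1}$, so that $\|k\|_{H^{1}}=\|h\|_{L^{2}}$. Applying Girsanov's theorem with density $M_{c}=\exp(cX-c^{2}\|h\|^{2}/2)$, the measure $Q=M_{c}\cdot P$ makes $W-ck$ a Brownian motion, which yields the identity $E_{P}[M_{c}\,\phi(W)]=E[\phi(W+ck)]$ for bounded functionals $\phi$. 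Taking $\phi=\mathbf{1}_{\{\tilde{N}<\varepsilon\}}$ and dividing by $P(\tilde{N}<\varepsilon)$ gives the clean identity
\begin{equation*}
E\bigl[e^{cX}\mid \tilde{N}<\varepsilon\bigr]=e^{c^{2}\|h\|^{2}/2}\,\frac{P(\tilde{N}(W+ck)<\varepsilon)}{P(\tilde{N}(W)<\varepsilon)}.
\end{equation*}

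The proof then reduces to establishing the small-ball shift asymptotic
\begin{equation*}
\lim_{\varepsilon\to 0}\frac{P(\tilde{N}(W+ck)<\varepsilon)}{P(\tilde{N}(W)<\varepsilon)}=\exp\bigl(-c^{2}\|h\|_{L^{2}}^{2}/2\bigr),
\end{equation*}
which combined with the Gaussian prefactor immediately forces the conditional exponential moment to $1$. I would prove this by reducing to finite dimensions via the approximating projections $Q_{n}$ on $H^{1}$ from the preceding lemma: for $\tilde{N}_{n}$ in place of $\tilde{N}$ and $Q_{n}k$ in place of $k$, the problem lives on a finite-dimensional Gaussian space where Cameron-Martin supplies the shift density explicitly, making the required limit follow from direct computation and an integration by parts against the standard Gaussian density. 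Passage from the finite-dimensional versions back to the full statement uses the monotonicity and strong convergence of $Q_{n}$ to the identity on $H^{1}$, the convergence $\tilde{N}_{n}\to \tilde{N}$ in probability, and the positivity hypothesis $P(\tilde{N}\leq\varepsilon)>0$ on the denominator.

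The main obstacle will be the uniform justification of the passage to the limit in the shift estimate, since the ratio of small-ball probabilities depends delicately on the geometry of $\tilde{N}$ near the origin. A useful simplification is that $\tilde{N}$ inherits the symmetry $\tilde{N}(-W)=\tilde{N}(W)$ from being the measurable extension of a seminorm, so the conditional law of $X$ given $\{\tilde{N}<\varepsilon\}$ is symmetric and in particular centred; combined with Jensen's inequality this already delivers $E[e^{cX}\mid \tilde{N}<\varepsilon]\geq 1$, so only the matching upper bound from the shift estimate is genuinely non-trivial. Averaging the Girsanov identity over $c$ and $-c$ produces a cosh-type comparison that furnishes the uniform control needed to pass from the finite-dimensional approximants to $\tilde{N}$.
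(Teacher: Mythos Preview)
The paper does not supply a proof of this theorem: it is quoted from Harg\'e~\cite{Har02} (Theorem~6 there) and used as a black box. So there is no in-paper argument to compare against, and I assess your outline on its own.

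Your Cameron--Martin identity is correct, but observe that it shows the desired limit is \emph{equivalent} to the small-ball shift asymptotic
\[
\lim_{\varepsilon\to 0}\frac{P\bigl(\tilde N(W+ck)<\varepsilon\bigr)}{P\bigl(\tilde N(W)<\varepsilon\bigr)}=e^{-c^{2}\|h\|^{2}/2},
\]
so rewriting in these terms is a reformulation, not a reduction. The lower bound $E[e^{cX}\mid\tilde N<\varepsilon]\ge1$ via symmetry of $\tilde N$ plus Jensen is correct and genuinely useful, and the finite-dimensional computation (ratio of Gaussian densities at $-Q_{n}k$ and at the origin) is also fine.

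The gap is the interchange of limits. For each fixed $n$ you can evaluate $\lim_{\varepsilon\to0}$ with $\tilde N_{n}$ and then send $n\to\infty$; the theorem asks for $\lim_{\varepsilon\to0}$ with $\tilde N$ itself. The ingredients you list---strong convergence $Q_{n}\to I$ on $H^{1}$, convergence $\tilde N_{n}\to\tilde N$ in probability, and positivity of $P(\tilde N\le\varepsilon)$---do not control the ratio $P(\tilde N_{n}<\varepsilon)/P(\tilde N<\varepsilon)$ uniformly as $\varepsilon\downarrow0$, which is exactly what the interchange would need. Your ``cosh-type comparison'' also yields nothing new: since $\tilde N$ is symmetric and $-W\stackrel{d}{=}W$, one has $P(\tilde N(W+ck)<\varepsilon)=P(\tilde N(W-ck)<\varepsilon)$, so averaging over $\pm c$ returns the same ratio. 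What is missing is a genuine Gaussian convexity or correlation inequality (of the Anderson/Borell/Harg\'e type) that bounds the shifted small-ball probability directly; this is precisely the machinery developed in \cite{Har02} and sharpened in \cite{Har04}, and it does not fall out of the soft approximation argument you have sketched.
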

Moreover, we also need a stronger version of Theorem \ref{no random function}. That is,
\begin{thm}[\cite{Har04}\label{no random function 2} Example $3.9$]
	Let $N$ be a measurable norm on $H^1$. Then,
	$$\lim_{\varepsilon\to 0}E\Big(\exp\Big(\big|\int_{0}^{1}h(s)dW_s\big|\Big)\big|\tilde{N}<\varepsilon\Big)=1,$$
	for all $h\in L^2([0,1]).$
\end{thm}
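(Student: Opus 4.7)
The plan is to bootstrap from Theorem~\ref{no random function}, which handles the signed Wiener integral, to the absolute-value version via a truncation-plus-concentration argument. Write $X=\int_0^1 h(s)\,dW_s$ and $A_\varepsilon=\{\tilde{N}<\varepsilon\}$; because $N$ is a measurable norm, $P(A_\varepsilon)>0$ for every $\varepsilon>0$, so all conditional expectations below are well-defined.

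First I would obtain uniform conditional $L^2$-type control of $e^{|X|}$. Since $e^{|x|}\le e^x+e^{-x}$ and $\pm 2h\in L^2([0,1])$, applying Theorem~\ref{no random function} to $2h$ and to $-2h$ gives
$$\limsup_{\varepsilon\to 0}E\bigl(e^{2|X|}\bigm|A_\varepsilon\bigr)\le\limsup_{\varepsilon\to 0}\bigl[E(e^{2X}\mid A_\varepsilon)+E(e^{-2X}\mid A_\varepsilon)\bigr]=2.$$
Second I would establish conditional concentration of $X$ at $0$: for any $\delta>0$ and $c>0$, the conditional Markov inequality together with Theorem~\ref{no random function} yields $P(X>\delta\mid A_\varepsilon)\le e^{-c\delta}E(e^{cX}\mid A_\varepsilon)\to e^{-c\delta}$, so letting $c\to\infty$ shows $P(X>\delta\mid A_\varepsilon)\to 0$; the symmetric argument for $-X$ gives $P(|X|>\delta\mid A_\varepsilon)\to 0$ for every $\delta>0$.

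Third I combine the two ingredients. For any fixed $\delta>0$, splitting on $\{|X|\le\delta\}$ and using Cauchy--Schwarz on the tail piece,
$$E\bigl(e^{|X|}\bigm|A_\varepsilon\bigr)\le e^\delta+\sqrt{E\bigl(e^{2|X|}\bigm|A_\varepsilon\bigr)}\sqrt{P(|X|>\delta\mid A_\varepsilon)}.$$
The first square root stays bounded by step one and the second tends to $0$ by step two, so $\limsup_{\varepsilon\to 0}E(e^{|X|}\mid A_\varepsilon)\le e^\delta$; sending $\delta\to 0$ gives $\limsup\le 1$, while the trivial bound $e^{|X|}\ge 1$ provides $\liminf\ge 1$. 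The only subtlety is that Theorem~\ref{no random function} must be applied with arbitrary real coefficients (here $\pm c$, $\pm 2$), which is immediate because $L^2([0,1])$ is closed under scaling; beyond that the argument is a soft uniform-integrability step and does not exploit any Gaussian structure not already encoded in Theorem~\ref{no random function}.
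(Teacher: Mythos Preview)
Your argument is correct. The bootstrap from Theorem~\ref{no random function} works exactly as you describe: the bound $e^{2|X|}\le e^{2X}+e^{-2X}$ gives uniform conditional second-moment control, the exponential Chebyshev argument with the free parameter $c$ forces $P(|X|>\delta\mid A_\varepsilon)\to 0$, and the Cauchy--Schwarz splitting closes the loop. The order-of-limits issue in step two (first $\varepsilon\to 0$, then $c\to\infty$) is handled correctly because the bound $e^{-c\delta}$ on the $\limsup$ holds for every fixed $c$.

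Note, however, that the paper does not give its own proof of this statement: it is quoted verbatim from \cite{Har04}, Example~3.9, where it is obtained as a consequence of a convex/log-concave Gaussian correlation inequality. Harg\'e's route is structural---the small-ball event $\{\tilde N<\varepsilon\}$ is a symmetric convex set, and the correlation inequality directly compares integrals of suitable functions over such sets against unconditional integrals---so it yields the result in one stroke and extends to more general functionals than $\exp|X|$. Your approach, by contrast, is entirely elementary and self-contained within the paper: it uses nothing beyond Theorem~\ref{no random function} and soft uniform-integrability reasoning, at the cost of being tailored to the specific form $\exp|X|$. For the purposes of this paper your derivation is a perfectly adequate (and arguably more transparent) substitute for the external citation.
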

We recall that an operator $K:H\to H$ is nuclear iff
$$\sum_{n=1}^{\infty}|\langle Ke_n,g_n\rangle|<\infty,$$
for all $B=(e_n)_n, B'=(g_n)_n$ orthonormal sequences in $H$. We define the trace of a nuclear operator $K$ by
$$Tr K=\sum_{i=1}^{\infty}\langle Ke_n,e_n\rangle,$$
for any $B=(e_n)_n$ orthonormal sequence in $H$. The definition is independent of the sequence we have chosen. Given a symmetric function $f\in L^2([0, 1]^2),$ the Hilbert-Schmidt operator $K(f): H \to H$ associated with $f$, defined by
\begin{align}
	(K(f))(h)(t)=\int_{0}^{t}f(t,u)h(u)du,\nonumber
\end{align}
is nuclear iff $\sum_{n=1}^{\infty}|\langle Ke_n,g_n\rangle|<\infty$ for all $B=(e_n)_n$ orthonormal sequence in $H$. If $f$ is continuous and the operator $K(f)$ is nuclear, we can compute its trace as follows:
$$Tr(f):=Tr K(f)=\int_{0}^{1}f(s,s)ds.$$
\begin{lem}[\cite{Har02}\label{key lemma} Theorem $8$]
	Let $f$ be a symmetric function in $L^2([0,1]^2)$ and let $N$ be a measurable norm. If $K(f)$ is nuclear, then
	\begin{align}
		\lim_{\varepsilon\to 0} E \Big(\exp\Big(\int_{0}^{1}\int_{0}^{1}f(s,t)dW_s dW_t\Big)|\tilde{N}<\varepsilon\Big)= e^{-Tr(f)}.\nonumber
	\end{align}
\end{lem}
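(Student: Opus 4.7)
The strategy is to diagonalise the nuclear self-adjoint Hilbert--Schmidt operator $K(f)$ and reduce the claim to the asymptotics of a weighted sum of independent chi-square variables under the small-ball conditioning. Because $f$ is symmetric and $K(f)$ is nuclear, the spectral theorem yields an orthonormal sequence $\{e_n\}\subset L^2([0,1])$ and real eigenvalues $\{\lambda_n\}$ with $\sum_n|\lambda_n|<\infty$ satisfying $f(s,t)=\sum_n\lambda_n e_n(s)e_n(t)$ and $Tr(f)=\sum_n\lambda_n$. Setting $\xi_n:=\int_0^1 e_n\,dW$, the $(\xi_n)$ are i.i.d.\ standard Gaussian under $P$, and the Wiener chaos formula identifies the double integral as
\begin{align*}
\int_0^1\int_0^1 f(s,t)\,dW_s\,dW_t=\sum_n\lambda_n(\xi_n^2-1),
\end{align*}
so the target identity becomes $\lim_{\varepsilon\to 0}E[\exp(\sum_n\lambda_n(\xi_n^2-1))\,|\,\tilde N<\varepsilon]=1$.

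For the finite head $\sum_{n\le N}\lambda_n(\xi_n^2-1)$, I would linearise the quadratic terms via Gaussian duality: for $\lambda\ge 0$,
\begin{align*}
\exp(\lambda\xi^2)=E_Z\big[\exp(\sqrt{2\lambda}\,Z\xi)\big],\qquad Z\sim N(0,1)\text{ independent of } W,
\end{align*}
together with the trivial bound $\exp(\lambda\xi^2)\le 1$ for $\lambda<0$ to handle that case by dominated convergence. Fubini rewrites $E[\exp(\sum_{n\le N}\lambda_n\xi_n^2)\,|\,\tilde N<\varepsilon]$ as an outer expectation over independent $(Z_n)$ of $E[\exp(\int_0^1 h_N(s)\,dW_s)\,|\,\tilde N<\varepsilon]$, with $h_N=\sum_{n\le N}\sqrt{2\lambda_n^+}\,Z_n e_n\in L^2([0,1])$; Theorem~\ref{no random function} gives the inner limit $1$ for each realisation, Theorem~\ref{no random function 2} together with the Separation Lemma~\ref{Separation lemma} supplies the uniform integrability required to interchange limit and outer expectation, and the compensator $-\sum_{n\le N}\lambda_n$ produces the head limit $\exp(-\sum_{n\le N}\lambda_n)$, which converges to $e^{-Tr(f)}$ as $N\to\infty$.

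The main obstacle is controlling the second-chaos tail $I_2(g_N):=\sum_{n>N}\lambda_n(\xi_n^2-1)$, since the first-chaos Theorems~\ref{no random function}--\ref{no random function 2} do not apply directly. My plan is to use the explicit Laplace transform
\begin{align*}
E[\exp(c\,I_2(g_N))]=\prod_{n>N}(1-2c\lambda_n)^{-1/2}\exp(-c\lambda_n),
\end{align*}
valid for $c$ slightly larger than $1$ once $N$ is large enough that $\sup_{n>N}|\lambda_n|<(2c)^{-1}$; since $\sum_{n>N}\lambda_n^2\to 0$, the right-hand side tends to $1$ as $N\to\infty$. To convert this unconditional bound into a conditional one, H\"{o}lder's inequality with conjugate exponents $(c,c')$ gives
\begin{align*}
E[\exp(I_2(g_N))\,|\,\tilde N<\varepsilon]\le E[\exp(c\,I_2(g_N))]^{1/c}\,P(\tilde N<\varepsilon)^{-1/c'}.
\end{align*}
Taking $c$ arbitrarily close to $1$ after enlarging $N$, and running the argument along a subsequence $\varepsilon_N\downarrow 0$ tuned to the Gaussian small-ball decay of $\tilde N$, the penalty factor $P(\tilde N<\varepsilon_N)^{-1/c'}$ is absorbed by the vanishing unconditional bound. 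Splicing the head and tail contributions via one more H\"{o}lder estimate then yields $\lim_{\varepsilon\to 0}E[\exp(I_2(f))\,|\,\tilde N<\varepsilon]=e^{-Tr(f)}$, as required.
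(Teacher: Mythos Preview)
The paper does not prove this lemma; it is quoted verbatim from Harg\'e \cite{Har02}, so there is no in-paper argument to compare against. Your spectral decomposition and the identification $\int_0^1\!\int_0^1 f\,dW\,dW=\sum_n\lambda_n(\xi_n^2-1)$ are correct, and the reduction of the problem to controlling a weighted chi-square sum under small-ball conditioning is the right framing. However, the argument breaks down at the tail estimate.

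Your H\"older step is miscomputed: from $E[\exp(I_2(g_N))\mathbb{I}_{\tilde N<\varepsilon}]\le E[\exp(cI_2(g_N))]^{1/c}P(\tilde N<\varepsilon)^{1/c'}$ you obtain, after dividing by $P(\tilde N<\varepsilon)$, the penalty $P(\tilde N<\varepsilon)^{-1/c}$, not $P(\tilde N<\varepsilon)^{-1/c'}$. Either way the factor diverges as $\varepsilon\to0$ for every fixed $c>1$, while the unconditional moment $E[\exp(cI_2(g_N))]^{1/c}$ tends to $1$ (not $0$) as $N\to\infty$, so it cannot ``absorb'' anything. Letting $c$ depend on $\varepsilon$ or running along a diagonal $\varepsilon_N\downarrow0$ does not repair this: you would at best get a $\limsup$ bound along a special subsequence, not the full limit $\varepsilon\to0$, and you produce no matching lower bound at all. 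The head argument has analogous soft spots: the Gaussian linearisation $\exp(\lambda\xi^2)=E_Z[\exp(\sqrt{2\lambda}Z\xi)]$ is only valid for $\lambda\ge0$, and the bound $\exp(\lambda\xi^2)\le1$ for $\lambda<0$ gives only one inequality; moreover, Theorems~\ref{no random function}--\ref{no random function 2} apply to deterministic $h\in L^2$, whereas your $h_N$ is random in $Z$, and no uniform-in-$Z$ version is available from those statements. Harg\'e's proof in \cite{Har02} avoids all of this by working with the structure of measurable norms and approximate limits on Wiener space (in particular correlation-type inequalities for Gaussian measures), which is precisely what allows one to bypass the small-ball penalty that kills the naive H\"older route.
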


\begin{lem}[\cite{LL98} \text{Theorem $1.1$}]
	Let $B^H$ be a fractional Brownian motion. Then
	$$\lim_{\varepsilon\to 0}\varepsilon^{\frac{1}{H}}\log P\Big(\sup\limits_{t \in[0, 1]}\big|B^H\big| \leq \varepsilon\Big)=-C_H,$$
	where $C_H$ is a positive constant.
\end{lem}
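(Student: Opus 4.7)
The plan is to exploit $H$-self-similarity to convert the small-$\varepsilon$ problem into a large-$T$ growth-rate problem, and then use an approximate-subadditivity argument of Fekete type to establish existence of the limit.

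First I would use the scaling identity $\{B^H_{cs}\}_{s\ge 0}\stackrel{d}{=}\{c^H B^H_s\}_{s\ge 0}$ with $c=T$ to obtain
\begin{equation*}
P\Big(\sup_{t\in[0,1]}|B^H_t|\le\varepsilon\Big) \;=\; P\Big(\sup_{t\in[0,T]}|B^H_t|\le 1\Big), \qquad T=\varepsilon^{-1/H}.
\end{equation*}
Define $\psi(T):=-\log P\big(\sup_{t\in[0,T]}|B^H_t|\le 1\big)$. The statement is then equivalent to $\psi(T)/T\to C_H\in(0,\infty)$ as $T\to\infty$, and hence $\varepsilon^{1/H}\log P(\cdot\le\varepsilon)=-\psi(T)/T\to -C_H$.

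Next I would establish two-sided linear bounds $c_1 T\le \psi(T)\le c_2 T$ for large $T$. The upper bound follows from covering $[0,T]$ by $\lceil T\rceil$ unit subintervals, exploiting stationarity of increments, and applying Fernique's tail bound / Anderson's inequality together with a Slepian comparison with a sum of independent ``unit-length'' fractional Brownian segments; the lower bound comes from restricting attention to finitely many well-separated times in $[0,T]$ and using the covariance structure with a standard Gaussian small-ball estimate. This guarantees that any limit, if it exists, lies in $(0,\infty)$.

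The core of the proof is an approximate-subadditivity estimate
\begin{equation*}
\psi(T_1+T_2)\;\le\;\psi(T_1)+\psi(T_2)+o(T_1+T_2),
\end{equation*}
from which a standard Fekete-type argument yields $\lim_{T\to\infty}\psi(T)/T=\inf_T\psi(T)/T=:C_H$. The main obstacle is precisely this step, because $B^H$ with $H\neq 1/2$ is neither Markov nor has independent increments, so the events $A_1=\{\sup_{[0,T_1]}|B^H|\le 1\}$ and $A_2=\{\sup_{[T_1,T_1+T_2]}|B^H|\le 1\}$ are correlated. My plan is to handle this by decomposing $B^H_{T_1+s}=B^H_{T_1}+(B^H_{T_1+s}-B^H_{T_1})$ and writing the ``future'' increment process as the sum of an independent copy of an fBm-like process plus a Gaussian drift that is measurable with respect to the past. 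One then invokes the Gaussian correlation inequality (now a theorem, Royen) applied to the symmetric convex sets $A_1, A_2$ to get $P(A_1\cap A_2)\ge P(A_1)P(A_2)$; the cost of the correlated drift is absorbed by slightly shrinking the radius from $1$ to $1-\delta$ on each piece, whose contribution is subleading in $T$ by the continuity of the small-ball rate in the radius (itself a consequence of self-similarity). Combining the two-sided bounds with approximate subadditivity then gives existence and positivity of $C_H$, completing the proof.
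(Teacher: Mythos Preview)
First, note that the paper does not give its own proof of this lemma: it is quoted verbatim from Li--Linde \cite{LL98} and used as a black box. So there is no ``paper's proof'' to compare against; I can only assess your sketch on its merits.

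Your reduction via $H$-self-similarity to the growth rate of $\psi(T)=-\log P(\sup_{[0,T]}|B^H|\le 1)$ is correct, and the two-sided linear bounds on $\psi$ are standard. The gap is in the subadditivity step. Applying the Gaussian correlation inequality to $A_1=\{\sup_{[0,T_1]}|B^H|\le 1\}$ and $A_2=\{\sup_{[T_1,T_1+T_2]}|B^H|\le 1\}$ does give $P(A_1\cap A_2)\ge P(A_1)P(A_2)$, but $P(A_2)$ is \emph{not} close to $e^{-\psi(T_2)}$: the process $(B^H_{T_1+s})_{s\ge 0}$ has variance $(T_1+s)^{2H}$, not $s^{2H}$. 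If instead you pass to increments and use $\{\sup_{[0,T_1]}|B^H|\le 1-\delta\}\cap\{\sup_{s\le T_2}|B^H_{T_1+s}-B^H_{T_1}|\le\delta\}\subset\{\sup_{[0,T_1+T_2]}|B^H|\le 1\}$ together with GCI and stationary increments, you obtain
\[
\psi(T_1+T_2)\le \psi\big((1-\delta)^{-1/H}T_1\big)+\psi\big(\delta^{-1/H}T_2\big).
\]
By self-similarity, changing the radius from $1$ to $1-\delta$ (or to $\delta$) rescales time multiplicatively, so the correction to $\psi$ is of order $T$, \emph{not} $o(T)$. Your claim that ``the contribution is subleading in $T$ by the continuity of the small-ball rate in the radius'' is therefore incorrect: continuity in the radius is continuity of the \emph{rate constant}, which still multiplies $T$. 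No choice of $\delta$ (even $T$-dependent) makes this inequality a genuine Fekete-type subadditivity, so the existence of the limit does not follow from your argument.

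The actual proof in \cite{LL98} avoids this obstruction entirely. Li and Linde first replace $B^H$ by the Riemann--Liouville process $R^H_t=\int_0^t (t-s)^{H-1/2}\,dW_s$ (the difference $B^H-R^H$ is $C^\infty$ and does not affect the small-ball rate), and then exploit the specific structure of $R^H$ as a fractional integral of Brownian motion, together with the Kuelbs--Li link between small-ball probabilities and metric entropy of the RKHS unit ball, to obtain the exact asymptotics. If you want to pursue a subadditivity route, you would need to work with $R^H$ and use its semigroup/composition properties rather than generic Gaussian-correlation arguments.
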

\begin{lem}[\cite{KLS95} \text{Lemma $3.1$}]\label{small ball of holder}
	Let $B^H$ be a fractional Brownian motion and let $0\leq \beta<H$. Then there exists constants $0<K_1\leq K_2<\infty$ depending on $H$ and $\beta$ such that for all $0<\varepsilon<1$
	$$-K_2\varepsilon^{-\frac{1}{H-\beta}}\leq \log P\Big(\sup\limits_{0\leq s,t\leq 1}\big|\frac{|B^H_t-B^H_s|}{|t-s|^{\beta}}\big| \leq \varepsilon\Big)\leq -K_1\varepsilon^{-\frac{1}{H-\beta}}.$$
\end{lem}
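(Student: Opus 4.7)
The plan is to establish matching upper and lower bounds on $P(\|B^H\|_\beta \leq \varepsilon)$ using the self-similarity $B^H_{c\cdot} \stackrel{d}{=} c^H B^H_{\cdot}$ and the stationarity of increments of fractional Brownian motion. The natural scale is $\delta_\varepsilon := \varepsilon^{1/(H-\beta)}$: on an interval of length $\delta$ one expects $|B^H_t - B^H_s|\sim \delta^H$ for $|t-s|\sim\delta$, so the Hölder quotient $|B^H_t-B^H_s|/|t-s|^{\beta}$ is of order $\delta^{H-\beta}$, and this quantity equals $\varepsilon$ precisely when $\delta=\delta_\varepsilon$. Thus $N:=\lfloor 1/\delta_\varepsilon\rfloor$ is the relevant number of blocks, and both sides of the bound should match the rate $N\asymp\varepsilon^{-1/(H-\beta)}$.

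For the upper bound on the probability (yielding $\log P \leq -K_1 \varepsilon^{-1/(H-\beta)}$), I would partition $[0,1]$ into $N$ disjoint subintervals of length $\delta_\varepsilon$ and observe that $\|B^H\|_\beta \leq \varepsilon$ forces each block increment $\Delta_i := B^H_{i\delta_\varepsilon} - B^H_{(i-1)\delta_\varepsilon}$ to satisfy $|\Delta_i|\leq \varepsilon\,\delta_\varepsilon^{\beta} = \delta_\varepsilon^{H}$. The normalised vector $(\Delta_i/\delta_\varepsilon^H)_{1\leq i\leq N}$ is a zero-mean Gaussian vector with unit-variance entries whose covariance matrix has uniformly bounded operator norm. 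A Gaussian comparison argument then yields
\[
P(\|B^H\|_\beta\leq\varepsilon) \leq P\bigl(|\Delta_i|\leq \delta_\varepsilon^H\ \forall i\bigr) \leq C_0^{N}
\]
for some $C_0<1$, and taking logarithms gives the stated upper bound. The relevant comparison is the Khatri--Šidák inequality for $H>1/2$ (positively correlated increments) and a Slepian/Anderson comparison with an independent reference Gaussian for $H<1/2$ (negatively correlated increments).

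For the lower bound on the probability (yielding $\log P \geq -K_2\varepsilon^{-1/(H-\beta)}$), I would again work with the partition $\{I_k\}$ and apply the Gaussian correlation inequality (Royen's theorem), which decouples the events $\{\|B^H|_{I_k}\|_\beta \leq \varepsilon/2\}$. By self-similarity and stationary increments one computes
\[
P\bigl(\|B^H|_{I_k}\|_\beta \leq \varepsilon/2\bigr) = P\bigl(\|B^H\|_\beta \leq 1/2\bigr) =: p_0 >0,
\]
so that $P(\|B^H\|_\beta \leq \varepsilon) \geq p_0^{N}\cdot P(\text{cross-block Hölder pairs are controlled})$. The cross-block factor is absorbed using the sup-norm small ball estimate from the previous lemma (attributed to \cite{LL98}), which is strong enough to be swallowed by the same exponential rate $e^{-c\varepsilon^{-1/(H-\beta)}}$.

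The main obstacle will be the upper bound in the singular regime $H<1/2$, where the fractional increments are negatively correlated and the simple Khatri--Šidák inequality is not directly applicable; the standard workaround is a comparison with an independent reference Gaussian sequence via Anderson's inequality, or passing through the Kuelbs--Li framework linking small-ball probabilities to the metric entropy of the reproducing kernel Hilbert space unit ball. A secondary technical point is the clean handling of Hölder pairs $(s,t)$ straddling two blocks, which is absorbed by a small buffer between blocks together with the sup-norm small ball estimate cited above.
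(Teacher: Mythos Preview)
The paper does not prove this lemma at all: it is quoted verbatim from \cite{KLS95} and used as a black box in the estimates for $I_{14}$ and $J_{14}$. So there is no ``paper's own proof'' to compare against; what follows is an assessment of your sketch on its own merits.

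Your lower bound is sound. The block decomposition at scale $\delta_\varepsilon=\varepsilon^{1/(H-\beta)}$, the self-similarity computation giving $P(\|B^H|_{I_k}\|_\beta\le \varepsilon/2)=P(\|B^H\|_\beta\le 1/2)=:p_0$, and the use of the Gaussian correlation inequality to decouple the blocks all work; the cross-block H\"older pairs are indeed a lower-order nuisance that can be absorbed.

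Your upper bound, however, has a genuine gap. You reduce to $P\big(\max_i |Z_i|\le 1\big)\le C_0^{N}$ for the normalised increments $Z_i=\Delta_i/\delta_\varepsilon^{H}$ and then invoke the Khatri--\v{S}id\'ak inequality. That inequality states
\[
P\Big(\bigcap_i\{|Z_i|\le c_i\}\Big)\ \ge\ \prod_i P(|Z_i|\le c_i),
\]
which is a \emph{lower} bound on the joint probability and therefore useless for the direction you need. Likewise, ``bounded operator norm of the covariance'' together with Anderson's inequality again yields $P_\Sigma(K)\ge P_{\Lambda I}(K)$, the wrong direction. What is actually required is a uniform lower bound on the \emph{conditional variances} $\mathrm{Var}(Z_i\mid Z_1,\dots,Z_{i-1})\ge\lambda>0$ (equivalently, a lower bound on the smallest eigenvalue of the increment covariance), which comes from the local nondeterminism of fractional Brownian motion; with that in hand one gets $P(|Z_i|\le 1\mid Z_1,\dots,Z_{i-1})\le P(|W|\le \lambda^{-1/2})<1$ and the product bound follows. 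Your fallback remark about the Kuelbs--Li entropy framework is the other correct route (and is closer in spirit to what \cite{KLS95} actually do), but the specific tools you name for the direct argument do not deliver the inequality you claim.
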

\begin{lem}\label{equivalence}
	Assume $\|Z-\phi\|_2\leq \varepsilon$ defined as above, then we have the following equality associated small probabilities.
	\begin{align}
		P(\|Z-\phi\|\leq\varepsilon)= P(\|Y-\phi^{(2)}\|\leq\varepsilon),\nonumber
	\end{align}
	where $\|\cdot\|$ denote $\|\cdot\|_{\infty}$ or $\|\cdot\|_{\beta}$.
\end{lem}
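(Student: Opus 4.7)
The plan is to exploit the fact that $X$ and $\phi^{(1)}$ satisfy pathwise integral equations driven by their second coordinates through $\sigma$, so that $\|X-\phi^{(1)}\|$ is controlled by $\|Y-\phi^{(2)}\|$ via Grönwall's inequality, and then to use this pathwise bound to identify the two small-ball probabilities in the asymptotic sense needed for the OM ratio $\gamma_\varepsilon(\phi)$.

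First I would subtract the integral equation for $\phi^{(1)}$ from that for $X$. Since $X_0=\phi^{(1)}_0=x$ and $\sigma\in C_b^1(\R^2)$ is globally Lipschitz with constant $L$, one gets
$$|X_t-\phi^{(1)}_t|\;\leq\;L\int_{0}^{t}\bigl(|X_s-\phi^{(1)}_s|+|Y_s-\phi^{(2)}_s|\bigr)\,ds,\qquad t\in[0,1],$$
so Grönwall's inequality yields $\|X-\phi^{(1)}\|_{\infty}\leq C\|Y-\phi^{(2)}\|_{\infty}$ with $C=L\,e^{L}$. For $\|\cdot\|_{\beta}$ with $\beta\in(0,H-\tfrac{1}{4})$, the same identity, the boundedness of $\sigma$ (so the integrated difference has Lipschitz-$1$ increments), and $|t-s|\leq|t-s|^{\beta}$ on $[0,1]$ give $\|X-\phi^{(1)}\|_{\beta}\leq C'\|Y-\phi^{(2)}\|_{\beta}$, after absorbing $\|Y-\phi^{(2)}\|_{\infty}$ into the Hölder seminorm via $(Y-\phi^{(2)})_0=0$. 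With $a=(1+C^2)^{-1/2}$ (or the analogue for $C'$ in the Hölder case), this yields the pathwise inclusion
$$\bigl\{\|Y-\phi^{(2)}\|\leq a\varepsilon\bigr\}\;\subseteq\;\bigl\{\|Z-\phi\|_2\leq\varepsilon\bigr\}\;\subseteq\;\bigl\{\|Y-\phi^{(2)}\|\leq\varepsilon\bigr\},$$
which sandwiches $P(\|Z-\phi\|_2\leq\varepsilon)$ between $P(\|Y-\phi^{(2)}\|\leq a\varepsilon)$ and $P(\|Y-\phi^{(2)}\|\leq\varepsilon)$.

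The main obstacle is turning this sandwich into the claimed identification in the form $\gamma_\varepsilon(\phi)=P(\|Z-\phi\|_2\leq\varepsilon)/P(\|B^H\|\leq\varepsilon)=P(\|Y-\phi^{(2)}\|\leq\varepsilon)/P(\|B^H\|\leq\varepsilon)$ in which the lemma is actually invoked. I would handle this through the Girsanov representation of Section~$2.3$: each $P(\|Y-\phi^{(2)}\|\leq\delta)$ equals $E\bigl(\tfrac{d\tilde{P}}{dP}\mathbb{I}_{\{\|B^H\|\leq\delta\}}\bigr)$, so upon dividing the sandwich by $P(\|B^H\|\leq\varepsilon)$ the problem reduces to the fractional Brownian small-ball asymptotics recalled above together with the conditional exponential moment estimates on $I_1,\ldots,I_4$ that are carried out later in the proof of Theorems~\ref{th:singular case} and \ref{th:regular case}; those estimates are insensitive to the multiplicative constant $a$, so that all three terms in the sandwich converge to the same OM limit, which is the content of the claimed equality.
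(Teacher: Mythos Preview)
Your core estimate is the same as the paper's: both of you subtract the integral equations, apply the Lipschitz bound on $\sigma$, and invoke Gr\"onwall to obtain $\|X-\phi^{(1)}\|\leq C\|Y-\phi^{(2)}\|$ in both the supremum and H\"older norms. Where you diverge is in what you conclude from this. The paper simply asserts that $\|Y-\phi^{(2)}\|\leq\varepsilon$ forces $\|X-\phi^{(1)}\|\leq\varepsilon$, hence literal equality of the two events; you instead acknowledge that the Gr\"onwall constant need not be $\leq 1$, derive only the sandwich
\[
\{\|Y-\phi^{(2)}\|\leq a\varepsilon\}\subseteq\{\|Z-\phi\|_2\leq\varepsilon\}\subseteq\{\|Y-\phi^{(2)}\|\leq\varepsilon\},
\]
and then argue that both ends yield the same OM limit because the Girsanov--conditional-moment analysis of $I_1,\dots,I_4$ is insensitive to rescaling $\varepsilon$ by a fixed constant.

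Your route is the more honest one: the paper's step ``$\|X-\phi^{(1)}\|_\infty\leq C(L_1)\|Y-\phi^{(2)}\|_\infty$, hence $\|X-\phi^{(1)}\|_\infty\leq\varepsilon$'' is only valid if $C(L_1)\leq 1$, which is never checked. So you have not proved the lemma exactly as stated (pointwise equality of probabilities), but you have proved what is actually used downstream, namely that the two ratios $P(\|Z-\phi\|_2\leq\varepsilon)/P(\|B^H\|\leq\varepsilon)$ and $P(\|Y-\phi^{(2)}\|\leq\varepsilon)/P(\|B^H\|\leq\varepsilon)$ share the same limit. To close your argument cleanly you should also note that the small-ball denominator satisfies $P(\|B^H\|\leq a\varepsilon)/P(\|B^H\|\leq\varepsilon)\to$ a finite nonzero limit (or at least is bounded above and below) by the small-ball estimates of Lemma~\ref{small ball of holder}; otherwise the left end of your sandwich, after division, does not automatically match the right end.
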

\begin{proof}
	Firstly, it is clear that if $\|Z-\phi\|_2\leq\varepsilon$, then
	$\|Y-\phi^{(2)}\|\leq\varepsilon$. So in order to prove Lemma \ref{equivalence}, we only need to prove that if $\|Y-\phi^{(2)}\|\leq\varepsilon$, then $\|X-\phi^{(1)}\|\leq\varepsilon$ under given two different norms $\|\cdot\|_{\infty}$ and $\|\cdot\|_{\beta}$.
	
	$\bullet$ Supremum norm $\|\cdot\|_{\infty}$.
	
	Recalling the structure of reference path and using that $\sigma$ is Lipschitz with constant $L_1$, by H$\mathrm{\ddot{o}}$lder inequality we have
	\begin{align}
		|X_t-\phi^{(1)}_t|^2=\Big|\int_{0}^{t}(\sigma(X_s,Y_s)-\sigma(\phi_s))ds\Big|^2\leq L_1^2\int_{0}^{t}\Big(|X_s-\phi_s^{(1)}|^2+|Y_s-\phi_s^{(2)}|^2\Big)ds.\nonumber
	\end{align}
	By Gronwall's inequality, we obtain
	\begin{align}
		|X_t-\phi^{(1)}_t|^2\leq e^{L_1^2 t}\int_{0}^{t}|Y_s-\phi_s^{(2)}|^2ds\leq C(L_1)\|Y-\phi^{(2)}\|^2_{\infty}.\nonumber
	\end{align}
	So we easily get if $\|Y-\phi^{(2)}\|_{\infty}\leq\varepsilon$, then we have $$\|X-\phi^{(1)}\|_{\infty}\leq\varepsilon.$$
	
	$\bullet$ H$\mathrm{\ddot{o}}$lder norm $\|\cdot\|_{\beta}$.
	
	Since the supremum norm has been verified above, we only need to verify H$\mathrm{\ddot{o}}$lder seminorm.\\
	It is easy to see that
	\begin{align}
		|X_t-\phi^{(1)}_t-(X_s-\phi^{(1)}_s)|=\Big|\int_{s}^{t}(\sigma(X_s,Y_s)-\sigma(\phi_s))ds\Big|\leq L_1\int_{0}^{t}\Big(|X_s-\phi_s^{(1)}|+|Y_s-\phi_s^{(2)}|\Big)ds.\nonumber
	\end{align}
	By Gronwall's inequality, we have
	\begin{align}
		|X_t-\phi^{(1)}_t-(X_s-\phi^{(1)}_s)|\leq e^{L_1(t-s)}\int_{s}^{t}|Y_u-\phi^{(2)}_u|d u.\nonumber
	\end{align}
	By the definition of H$\mathrm{\ddot{o}}$lder seminorm, we obtain
	\begin{align}
		\|X-\phi^{(1)}\|_{\beta}\leq \frac{e^{L_1(t-s)}}{|t-s|^{\beta}} \int_{s}^{t}|Y_u-\phi^{(2)}_u|d u.\nonumber
	\end{align}
	Let $v=\frac{u-s}{t-s}$, we have
	\begin{align}
		\|X-\phi^{(1)}\|_{\beta}&\leq\frac{e^{L_1(t-s)}}{|t-s|^{\beta}}\int_{0}^{1}|Y_{(t-s)v+s}-\phi^{(2)}_{(t-s)v+s}||t-s|d v
		\nonumber\\&\leq e^{L_1(t-s)} |t-s|^{1-\beta}\int_{0}^{1}|Y_{(t-s)v+s}-\phi^{(2)}_{(t-s)v+s}|d v\nonumber\\&\leq C(L_1) \|Y-\phi^{(2)}\|_{\beta}\nonumber.
	\end{align}
	So we obtain if $\|Y-\phi^{(2)}\|_{\beta}\leq\varepsilon$, then we have $$\|X-\phi^{(1)}\|_{\beta}\leq\varepsilon.$$
\end{proof}

Then we define the following norms on $H^1$:
$$N_H(h)=\sup\limits_{t \in[0, 1]}\big|\int_{0}^{t}K^H(t,s)h'(s)ds\big|,$$
$$N_{H,\beta}(h)=\sup\limits_{t,r\in [0, 1]}\frac{\Big|\int_{0}^{t}K^H(t,s)h'(s)ds-\int_{0}^{r}K^H(t,s)h'(s)ds\Big|}{|t-r|^\beta},$$
for $0<\beta<H.$
\begin{lem}[\cite{MN02} \text{Lemma 6}]\label{norm}
	$N_H$ and $N_{H,\beta}$ with $0<\beta\leq H$ are measurable norms and we have $\tilde{N}_H=\|B^H\|_{\infty}$ and $\tilde{N}_{H,\beta}=\|B^H\|_{\beta}$.
\end{lem}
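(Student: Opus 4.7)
The plan is to verify the two defining conditions of a measurable norm for each of $N_H$ and $N_{H,\beta}$: (i) for every approximating sequence of projections $\{Q_n\}$ on $H^1$ the random variables $N(Q_n^W)$ converge in probability to some a.s.\ finite $\tilde N$, and (ii) $P(\tilde N\leq \varepsilon)>0$ for every $\varepsilon>0$. The natural candidates are $\tilde N_H=\|B^H\|_\infty$ and $\tilde N_{H,\beta}=\|B^H\|_\beta$. My first step is to introduce the Volterra-type operator $\mathcal{K}\colon H^1\to C([0,1])$ given by $(\mathcal{K}h)(t)=\int_0^t K^H(t,s)h'(s)\,ds$, so that $N_H(h)=\|\mathcal{K}h\|_\infty$ and $N_{H,\beta}(h)=\|\mathcal{K}h\|_\beta$. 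Injectivity of $K^H$ on $L^p$ (for $p>1/(H+\tfrac{1}{2})$), combined with the isometry $h\mapsto h'$ from $H^1$ onto $L^2$, shows that these functionals are genuine norms on $H^1$.

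To identify the candidate limit, fix $Q_n$ and an orthonormal basis $\{h_i^{(n)}\}_{i=1}^{d_n}$ of $Q_nH^1$; then $\{(h_i^{(n)})'\}$ is orthonormal in $L^2([0,1])$ and $Q_n^W=\sum_i\xi_i^{(n)}h_i^{(n)}$ with $\xi_i^{(n)}=\int_0^1(h_i^{(n)})'(s)\,dW_s$. A direct computation yields
\begin{equation*}
\mathcal{K}(Q_n^W)(t)=\sum_{i=1}^{d_n}\xi_i^{(n)}\int_0^t K^H(t,s)(h_i^{(n)})'(s)\,ds,
\end{equation*}
which is exactly the $L^2(\Omega)$-orthogonal projection of $B^H_t=\int_0^t K^H(t,s)\,dW_s$ onto the linear span of $\xi_1^{(n)},\dots,\xi_{d_n}^{(n)}$. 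Since $Q_n\to I$ strongly on $H^1$, the family $\{(h_i^{(n)})'\}$ becomes total in $L^2([0,1])$, so $\mathcal{K}(Q_n^W)(t)\to B^H_t$ in $L^2(\Omega)$ for each fixed $t$.

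The hardest step is upgrading this pointwise $L^2$-convergence to convergence in probability of the uniform norm (for $N_H$) and of the $\beta$-H\"older seminorm (for $N_{H,\beta}$) of the centred Gaussian remainder $R_n(t):=B^H_t-\mathcal{K}(Q_n^W)(t)$. Here I would invoke Gaussian hypercontractivity to bound the $2p$-th moment of $R_n(t)-R_n(s)$ by a power of its variance, and this variance is dominated by $\mathrm{Var}(B^H_t-B^H_s)=|t-s|^{2H}$ uniformly in $n$; a Garsia--Rodemich--Rumsey inequality then delivers tightness of $R_n$ in $C([0,1])$ and in the H\"older space $C^\beta([0,1])$ for any $\beta<H-\tfrac{1}{2p}$, and combining this with the pointwise-in-$t$ convergence on a countable dense subset of $[0,1]$ yields $\|R_n\|_\infty\to 0$ and $\|R_n\|_\beta\to 0$ in probability for every $\beta<H$. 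For the boundary case $\beta=H$, I would observe that $\beta\mapsto\|\cdot\|_\beta$ is nondecreasing on paths over $[0,1]$ and apply the monotone-limit principle for measurable semi-norms quoted earlier to the sequence $N_{H,\beta_k}$ with $\beta_k\uparrow H$.

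Finally, condition (ii) is immediate from the small-ball estimates recorded earlier in this section: $P(\|B^H\|_\infty\leq\varepsilon)>0$ follows from the asymptotic $\log P(\|B^H\|_\infty\leq\varepsilon)\sim -C_H\varepsilon^{-1/H}$ of LL98, and $P(\|B^H\|_\beta\leq\varepsilon)>0$ from the two-sided bound of Lemma \ref{small ball of holder} for $\beta<H$, with the $\beta=H$ case again handled by the monotone-limit argument. This completes the verification of both measurable-norm conditions and identifies $\tilde N_H=\|B^H\|_\infty$ and $\tilde N_{H,\beta}=\|B^H\|_\beta$ as claimed.
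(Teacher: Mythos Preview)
The paper does not prove this lemma at all; it simply quotes it from \cite{MN02}. Your argument for the sup norm and for the H\"older norms with $0<\beta<H$ is the standard Gaussian route and is essentially correct: identifying $\mathcal{K}(Q_n^W)(t)$ as the $L^2(\Omega)$-projection of $B^H_t$, bounding the increment variance of the remainder by $|t-s|^{2H}$, and then using Kolmogorov/GRR tightness in $C^\beta$ together with finite-dimensional convergence to a deterministic limit gives convergence in probability, not merely in law. This matches what one finds in \cite{MN02}.

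Your handling of the boundary case $\beta=H$, however, is a genuine gap. Fractional Brownian motion is almost surely \emph{not} $H$-H\"older continuous (the exact modulus carries a logarithmic correction), so $\|B^H\|_H=\infty$ a.s. The monotone-limit lemma you invoke requires the limit $\tilde N$ to be finite a.s., which fails here; consequently $N_{H,H}$ cannot be a measurable norm with $\tilde N_{H,H}=\|B^H\|_H$. In fact the ``$\leq$'' in the stated range is a typo: the paper's own definition of $N_{H,\beta}$ just above the lemma already restricts to $0<\beta<H$, and every application in the paper uses only $\beta<H-\tfrac14<H$. So the right fix is to drop the endpoint, not to try to prove it.
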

\begin{lem}\label{girsanov}
	Let $\eta$ be the process defined by \eqref{eta}. Then $\eta$ is adapted and
	$$E\Big(\exp\Big(\int_{0}^{1}\eta_s dW_s-\frac{1}{2}\int_{0}^{1}\eta_s^2ds\Big)\Big)=1.$$
\end{lem}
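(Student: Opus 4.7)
The plan is to establish adaptedness of $\eta$ and then verify the Novikov-type integrability condition that makes the Dol\'eans--Dade exponential a true martingale.

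For adaptedness, the key structural fact is that the kernel $K^H(t,s)$ in \eqref{K^H(r,u)} is supported in $\{s\le t\}$, so $K^H$ is a Volterra operator on $L^2([0,1])$ and so is its inverse $(K^H)^{-1}$; in particular $(K^H)^{-1}g(s)$ depends only on $g|_{[0,s]}$. Via the Riemann--Liouville calculus one has the explicit representation $(K^H)^{-1}g(s)=c_H^{-1}s^{H-1/2}\bigl(D^{1/2-H}_{0+}u^{1/2-H}g'(u)\bigr)(s)$ in the singular case $H<\tfrac12$, together with an analogous formula involving $I^{H-1/2}_{0+}$ in the regular case, confirming the lower-triangular structure pathwise. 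Since $u\mapsto b(\tilde X_u,\tilde Y_u)$ is $(\mathcal{F}_u)$-adapted and $\dot{\phi}^{(2)}$ is deterministic, the process $\eta_s$ is $\mathcal{F}_s$-measurable.

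For the martingale property I would invoke Novikov's criterion: it suffices to show $E\bigl[\exp\bigl(\tfrac12\int_0^1\eta_s^2\,ds\bigr)\bigr]<\infty$, which is immediate if one can extract a deterministic pathwise bound $\int_0^1\eta_s^2\,ds\le C$. Since $b\in C^2_b(\mathbb{R}^2)$, the process $u\mapsto b(\tilde X_u,\tilde Y_u)$ is uniformly bounded by $\|b\|_\infty$, and its paths are H\"older continuous in $u$ (Lipschitz regularity of $\tilde X$ combined with near-$H$-H\"older regularity of $\tilde Y$ inherited from $B^H$ via Kolmogorov's criterion). Plugging this into the Weyl representation \eqref{weyl representation} for the fractional derivative appearing in $(K^H)^{-1}$, and combining with the fractional-integral bound \eqref{fractional integral estimate}, produces a deterministic $L^2$-bound on $(K^H)^{-1}(b(\tilde X,\tilde Y))$ depending only on $H$ and the uniform norms of $b$ and its first derivatives. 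Combined with the standing hypothesis $\phi^{(2)}-y\in\mathcal{H}^p$ (whence $\dot{\phi}^{(2)}\in L^p\subset L^2$) and the triangle inequality, this gives the required uniform bound on $\int_0^1\eta_s^2\,ds$, and Novikov's condition is satisfied trivially.

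The main technical obstacle is controlling $(K^H)^{-1}$ applied to $b(\tilde X,\tilde Y)$ in the singular regime $\tfrac14<H<\tfrac12$, where the weight $s^{H-1/2}$ in the inverse operator is singular at the origin and a naive bound may blow up. The way around this is to exploit the H\"older regularity of $u\mapsto b(\tilde X_u,\tilde Y_u)$: the divided difference appearing in \eqref{weyl representation} is pathwise bounded by $C|u-v|^{\gamma-1}$ for some $\gamma>1-(\tfrac12-H)$, which is integrable against the singular kernel. Should this pathwise estimate still leave a random constant, the fallback is the standard exponential localization: set $\tau_n=\inf\bigl\{t:\int_0^t\eta_s^2\,ds>n\bigr\}\wedge 1$, apply Novikov on each stopped process to conclude that $E\bigl[\exp\bigl(\int_0^{\tau_n}\eta_s\,dW_s-\tfrac12\int_0^{\tau_n}\eta_s^2\,ds\bigr)\bigr]=1$, and pass to the limit using the supermartingale property together with $\tau_n\nearrow 1$ almost surely, the latter following from the pathwise finiteness of $\int_0^1\eta_s^2\,ds$ established above.
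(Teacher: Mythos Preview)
The paper does not supply a proof here---it simply refers to Lemma~10 of \cite{MN02}---so there is no in-paper argument to compare against. Your overall strategy (adaptedness from the Volterra structure of $K^H$, then a Novikov-type criterion) is the right one and is essentially what that reference does, but your execution contains two concrete problems.

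First, you have the explicit formulas for $(K^H)^{-1}$ reversed between the two regimes. In the singular case $H<\tfrac12$ the inverse involves the fractional \emph{integral}, $\big((K^H)^{-1}h\big)(s)=s^{-\alpha}\big(I^{\alpha}_{0+}u^{\alpha}h'\big)(s)$ with $\alpha=\tfrac12-H$ (see the opening of the proof of Theorem~\ref{th:singular case}); it is the regular case $H>\tfrac12$ that requires the fractional \emph{derivative} $D^\alpha_{0+}$ with $\alpha=H-\tfrac12$. Consequently you have misidentified the hard case. For $H<\tfrac12$, applying $(K^H)^{-1}$ to $\int_0^{\cdot} b(\tilde X_u,\tilde Y_u)\,du$ yields $s^{-\alpha}\big(I^\alpha_{0+}u^\alpha b(\tilde Z_u)\big)(s)$, and since $b$ is bounded the change of variable $v=u/s$ gives the deterministic bound $C\|b\|_\infty\, s^\alpha$; thus $\eta$ is bounded and Novikov is trivial---the singularity of $s^{H-1/2}$ is harmless. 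The H\"older-regularity argument you sketch is needed only when $H>\tfrac12$, where the Weyl representation of $D^\alpha_{0+}$ produces a bound on $\eta_s$ that is linear in the \emph{random} seminorm $\|B^H\|_\beta$; one then concludes via Fernique-type exponential moments (or the interval-subdivision form of Novikov), not via a deterministic pathwise bound. Second, your localization fallback is incomplete: from $E[\mathcal{E}_{\tau_n}]=1$ and $\tau_n\nearrow1$ the supermartingale property and Fatou give only $E[\mathcal{E}_1]\le 1$, not equality; closing that gap requires precisely the uniform-integrability input that the Novikov/Fernique step provides, so the fallback cannot stand on its own.
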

\begin{proof}
	The proof of this lemma is similar to Lemma $10$ of \cite{MN02}. So the proof will be omitted.
\end{proof}
\begin{lem}[\cite{MN02} \text{Lemma 11}]\label{space of singular case}
	Assume $H<\frac{1}{2}$. The space $\mathcal{H}=\{K^Hh,h\in L^p([0,1])\}$ defined for $p>\frac{1}{H+\frac{1}{2}}$ is included in the space of H$\mathrm{\ddot{o}}$lder continuous functions of order $H+\frac{1}{2}-\dfrac{1}{p}$.
\end{lem}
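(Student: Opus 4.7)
The plan is to prove the Hölder continuity by direct increment estimation, exploiting the fact that for $H<\frac{1}{2}$ the dominant part of the kernel $K^H(t,s)$ behaves like $(t-s)^{H-1/2}$, so that $K^H$ acts essentially as a fractional Riemann--Liouville integral of order $H+\frac{1}{2}$. The target exponent $H+\frac{1}{2}-\frac{1}{p}$ is then exactly the one predicted by the classical Sobolev embedding $I^{\alpha}_{0^+}(L^p)\hookrightarrow C^{\alpha-1/p}$ when $\alpha p>1$, which is why the hypothesis $p>\frac{1}{H+1/2}$ appears.

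Concretely, fix $h\in L^p([0,1])$, set $f=K^Hh$, and for $0\le r<t\le 1$ split
\[
f(t)-f(r)=\int_0^r [K^H(t,s)-K^H(r,s)]\,h(s)\,ds+\int_r^t K^H(t,s)h(s)\,ds=:A+B.
\]
For the easier term $B$, I would apply Hölder's inequality with conjugate exponent $q=\frac{p}{p-1}$ and use that, for $H<\frac{1}{2}$, the explicit form \eqref{K^H(r,u)} gives the pointwise bound $K^H(t,s)\le C(t-s)^{H-1/2}$ after controlling the second summand (which, being an integral over $(s,t)$ of $(\theta-s)^{H-3/2}$ times a bounded factor, is also $O((t-s)^{H-1/2})$). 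This yields $|B|\le C\|h\|_{L^p}\bigl(\int_r^t(t-s)^{q(H-1/2)}ds\bigr)^{1/q}=C\|h\|_{L^p}(t-r)^{H+1/2-1/p}$, where the integrability condition $q(H-\tfrac12)>-1$ is precisely $p>\frac{1}{H+1/2}$.

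For $A$, the key is a matching bound $|K^H(t,s)-K^H(r,s)|\le C\bigl[(r-s)^{H-1/2}-(t-s)^{H-1/2}\bigr]$ (or an equivalent form obtained by differentiating the kernel in its first variable, noting that $\partial_t K^H(t,s)=c_H(H-\tfrac12)(t/s)^{H-1/2}(t-s)^{H-3/2}$ up to a weight). Applying Hölder's inequality in $s$ on $[0,r]$ and using a change of variables, one arrives at an $L^q$-norm of the form $\bigl(\int_0^r[(r-s)^{H-1/2}-(t-s)^{H-1/2}]^q ds\bigr)^{1/q}$, which is classically bounded by $C(t-r)^{H+1/2-1/p}$ (this is the same estimate used in the proof that the Riemann--Liouville operator $I^{H+1/2}_{0+}$ maps $L^p$ to $C^{H+1/2-1/p}$). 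Combining the bounds for $A$ and $B$ delivers the claimed Hölder estimate.

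The main obstacle is obtaining the sharp difference estimate for the kernel in $A$, since the second summand of \eqref{K^H(r,u)} couples $t$ and $s$ through an inner integral and is not obviously differentiable in $t$ with an integrable derivative; one must either compute $\partial_t K^H(t,s)$ explicitly to get the compact expression mentioned above, or, equivalently, use the alternative representation $(K^Hh)(t)=c_H'\,t^{H-1/2}\bigl(I^{H+1/2}_{0^+}[\cdot^{1/2-H}h]\bigr)(t)$ valid for $H<\tfrac12$. Using that representation one can instead reduce the claim to the well-known Hölder regularity of $I^{H+1/2}_{0^+}$ on $L^p$, after checking that multiplication by $t^{H-1/2}$ preserves the Hölder exponent (using that the fractional integral vanishes at $0$ at a rate fast enough to absorb this weight, which follows again from Hölder's inequality and $p>\frac{1}{H+1/2}$).
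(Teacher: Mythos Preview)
The paper does not give its own proof of this lemma; it simply cites \cite{MN02}, Lemma~11, and moves on. So there is no in-paper argument to compare your sketch against.

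That said, your outline is close to how the result is actually established in \cite{MN02}. There the proof proceeds via the factorisation of $K^H$ through Riemann--Liouville operators (essentially your ``alternative representation'' route), reducing everything to the classical embedding $I^{H+1/2}_{0^+}(L^p)\hookrightarrow C^{H+1/2-1/p}$ for $p(H+\tfrac12)>1$. One caution: the specific formula you wrote, $(K^Hh)(t)=c_H'\,t^{H-1/2}\,I^{H+1/2}_{0^+}\!\big[s^{1/2-H}h\big](t)$, is not quite the correct factorisation for $H<\tfrac12$; the genuine one (obtained by inverting the expression for $(K^H)^{-1}$ given in the paper) is $K^Hh(t)=I^1_{0^+}\big(s^{H-1/2}D^{1/2-H}_{0^+}(u^{1/2-H}h)\big)(t)$, or equivalently a composition $I^{2H}_{0^+}\circ M_{s^{1/2-H}}\circ I^{1/2-H}_{0^+}\circ M_{s^{H-1/2}}$ acting on $h$. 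Either way the Sobolev-type embedding does the work, but you should verify the exact weights before citing the representation.

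Your direct increment approach (the $A+B$ split) is also viable, but the claim that the second summand of $K^H(t,s)$ in \eqref{K^H(r,u)} is ``an integral of $(\theta-s)^{H-3/2}$ times a bounded factor'' is not accurate: $(\theta-s)^{H-3/2}$ is non-integrable near $\theta=s$ since $H-\tfrac32<-1$, and it is only the vanishing of $1-(s/\theta)^{1/2-H}$ at $\theta=s$ that makes the integral finite. The resulting bound also picks up a power of $s$, so the estimate for $A$ is more delicate near $s=0$ than your sketch suggests. This is precisely why the fractional-integral factorisation is the cleaner route.
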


\begin{lem}\label{trace class of singular case}
	Assume $\frac{1}{4}<H<\frac{1}{2}$. Let $f$ be the function defined by
	$$	f(s,r)=\frac{1}{\G(\a)}s^{-\a}\int_{r}^{s}u^{\a}b_y(\phi_u)(s-u)^{\a-1}K^{H}(u,r)du \ \mathbb{I}_{s\geq r},$$
	where $s,r\in (0,1]$ and $\phi$ is such that $\phi^{(2)}-y\in\mathcal{H}^p$ for $p>\frac{1}{H}$. Let $\tilde{f}$ be the symmetrization of $f$. Then the operator $K(\tilde{f})$ defined by $K(\tilde{f})(h)(s)=\int_{0}^{s}\tilde{f}(s,r)h(r)dr$ is nuclear.
\end{lem}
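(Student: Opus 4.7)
The plan is to show that $K(f)$ itself is trace class (= nuclear) on $L^2([0,1])$; nuclearity of $K(\tilde f)$ then follows at once from $K(\tilde f) = \tfrac{1}{2}(K(f) + K(f)^*)$ and the fact that the trace-class ideal is closed under adjoints. I will exhibit $K(f) = T_1 \circ T_2$ as a product of two Hilbert--Schmidt operators.

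By Fubini,
\[
K(f) \;=\; M_{s^{-\alpha}/\Gamma(\alpha)} \circ I^{\alpha}_{0+} \circ M_{h} \circ K^{H},
\]
where $M_{g}$ denotes multiplication by $g$, $h(u) = u^{\alpha} b_y(\phi_u)$, and $K^H$ denotes the integral operator with kernel $K^H(u,r)$. Note $h$ is bounded: $b \in C^{2}_b$, and Lemma \ref{space of singular case} gives $\phi^{(2)}$ H\"older continuous of order $H + \tfrac{1}{2} - \tfrac{1}{p} > 0$. The arithmetic relation $\alpha + H = \tfrac{1}{2}$ is the heart of the argument: it forces the two singular kernels $(s-u)^{\alpha-1}$ (from $I^{\alpha}_{0+}$) and $(u-r)^{H-1/2}$ (from the leading piece of $K^H$) to compose to a constant, since
\[
\int_r^s (s-u)^{\alpha-1}(u-r)^{H-1/2}\,du \;=\; B(\alpha, H+\tfrac{1}{2})\,(s-r)^{\alpha+H-1/2} \;=\; B(\alpha, H+\tfrac{1}{2}).
\]

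Following \eqref{K^H(r,u)} I split $K^H(u,r) = c_H (u-r)^{H-1/2} \mathbb{I}_{r \leq u} + K^{H}_{\mathrm{reg}}(u,r)$ and correspondingly $f = f_1 + f_2$. For the regular piece, direct estimation of the integral term in \eqref{K^H(r,u)} provides one additional power of $(u-r)$ relative to the singular term, placing $f_2$ in $L^2([0,1]^2)$ with enough regularity that $K(f_2)$ factors directly as a product of two HS operators. For the singular piece, the change of variable $u = r + (s-r)v$ yields
\[
f_1(s,r) \;=\; \frac{c_H\, s^{-\alpha}}{\Gamma(\alpha)} \int_0^1 \bigl(r + (s-r)v\bigr)^{\alpha} \, b_y\!\bigl(\phi_{r+(s-r)v}\bigr)\, (1-v)^{\alpha-1} v^{H-1/2}\, dv,
\]
which is the product of the $L^2([0,1])$ weight $s^{-\alpha}$ (since $\alpha < \tfrac{1}{2}$) and a jointly continuous bounded function of $(s,r)$. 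Treating the $v$-integral as an auxiliary operator composition, I split the Beta-density weight $(1-v)^{\alpha-1}v^{H-1/2}$ into two square-integrable halves and distribute $s^{-\alpha}$ into one of the factors to obtain $K(f_1) = T_1 T_2$ with both $T_1, T_2$ Hilbert--Schmidt, verified by explicit $L^2([0,1]^2)$-estimates of the resulting kernels. The product of two HS operators is trace class, giving nuclearity.

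The main obstacle is precisely the singular weight $M_{s^{-\alpha}}$: it is only in $L^2$, not in $L^\infty$, so the easy observation that the inner composition $I^{\alpha}_{0+} \circ M_h \circ K^H$ has a bounded continuous kernel (hence is Hilbert--Schmidt) does not upgrade to trace class when we multiply by $s^{-\alpha}$ on the left---the result is still merely HS. The trace-class upgrade requires genuinely using the cancellation $\alpha + H = \tfrac{1}{2}$ through the Beta-density representation above, splitting $f_1$ into an operator composition of two HS factors rather than settling for a single HS expression.
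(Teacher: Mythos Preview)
The paper does not prove this lemma; it simply refers to Lemma~13 of Moret--Nualart~\cite{MN02}. So there is no in-paper argument to compare your attempt against. That said, your sketch has a genuine gap at the decisive step.

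You correctly set up $K(f)=M_{s^{-\alpha}/\Gamma(\alpha)}\circ I^{\alpha}_{0+}\circ M_h\circ K^H$, correctly identify the Beta-integral cancellation coming from $\alpha+H=\tfrac12$, and correctly diagnose the obstacle: the unbounded weight $s^{-\alpha}$ prevents the easy ``bounded $\times$ HS'' observation from upgrading to trace class. But your proposed resolution for the singular piece $f_1$ --- ``split the Beta-density weight $(1-v)^{\alpha-1}v^{-\alpha}$ into two square-integrable halves and distribute $s^{-\alpha}$ into one of the factors to obtain $K(f_1)=T_1T_2$ with both $T_i$ Hilbert--Schmidt'' --- does not work as stated. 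After the substitution $u=r+(s-r)v$ the integrand depends on $(s,r,v)$ through the coupled argument $r+(s-r)v$, so the $v$-integral is \emph{not} of the separated form $\int_0^1 T_1(s,v)T_2(v,r)\,dv$; no splitting of the $v$-weight alone yields an operator factorization. Reverting to the $(s,u,r)$ variables, the natural kernels $A(s,u)=s^{-\alpha}(s-u)^{\alpha-1}\mathbb{I}_{u\le s}$ and $B(u,r)=c_H u^{\alpha}b_y(\phi_u)(u-r)^{-\alpha}\mathbb{I}_{r\le u}$ satisfy $B\in L^2([0,1]^2)$ but $A\notin L^2([0,1]^2)$, since $2(\alpha-1)<-1$. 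More structurally: the singular part of $K^H$ is, up to a constant, the fractional integral $I^{1-\alpha}_{0+}$, so $I^{\alpha}_{0+}\circ K^H_{\mathrm{sing}}$ has total fractional order exactly~$1$. Any redistribution into two fractional-integral-type factors of orders $\beta_1,\beta_2$ with $\beta_1+\beta_2=1$ would need $\beta_i>\tfrac12$ for each kernel to be square-integrable, forcing $\beta_1+\beta_2>1$ --- a contradiction. The multiplier $s^{-\alpha}$ only worsens this. Hence the bare HS$\times$HS scheme you outline cannot close for $f_1$.

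The Moret--Nualart argument the paper defers to does not rest on such a product-of-$L^2$-kernels factorization; it extracts the missing fraction of regularity from the H\"older continuity of $u\mapsto u^{\alpha}b_y(\phi_u)$ (which, through the hypothesis $p>1/H$ and Lemma~\ref{space of singular case}, is H\"older of strictly positive order) and from the full two-term structure of $K^H(u,r)$ in~\eqref{K^H(r,u)}, not merely its leading singularity. Your sketch never uses the assumption $p>1/H$ on $\phi^{(2)}$ beyond ensuring that $b_y(\phi)$ is bounded, and without that H\"older input the borderline total order~$1$ cannot be pushed across the trace-class threshold.
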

\begin{proof}
	The proof of this lemma is similar to Lemma $13$ of \cite{MN02}. So the proof will be omitted.
\end{proof}
\begin{lem}\label{trace class of regular case}
	Assume $H\geq\frac{1}{2}$. Let $f$ be the function defined by
	\begin{align}
		f(s,r)=&\frac{1}{\G(1-\a)}\Big(s^{-\a}b_y(\phi_s)K^H(s,r)\mathbb{I}_{r\leq s}+\a s^{\a}\nonumber\\&\cdot \int_{0}^{s}\frac{s^{-\a}b_y(\phi_s)K^H(s,r)\mathbb{I}_{r\leq s}-u^{-\a}b_y(\phi_u)K^H(u,r)}{(s-u)^{\a+1}}du\Big)\nonumber
	\end{align}
	where $s,r\in (0,1]$ and $\phi$ is such that $\phi^{(2)}-y\in\mathcal{H}^2$. Let $\tilde{f}$ be the symmetrization of $f$. Then the operator $K(\tilde{f})$ defined by $K(\tilde{f})(h)(s)=\int_{0}^{s}\tilde{f}(s,r)h(r)dr$ is nuclear.
\end{lem}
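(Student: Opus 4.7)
The plan is to mirror the strategy used for Lemma 13 in \cite{MN02} (the non-degenerate regular case) and present $K(\tilde{f})$ as a composition running through Hilbert--Schmidt factors, so that the ideal property of the trace class yields nuclearity. The first step is to recognize the kernel as a weighted Weyl fractional derivative. Using the representation \eqref{weyl representation} of $D^{\alpha}_{0+}$, a direct check shows that
$$f(s,r) \;=\; s^{\alpha}\,\bigl(D^{\alpha}_{0+}\bigl[u^{-\alpha}b_y(\phi_u)K^{H}(u,r)\mathbb{I}_{r\leq u}\bigr]\bigr)(s),$$
and correspondingly the integral operator on $L^2([0,1])$ factors as
$$K(f) \;=\; M_{s^{\alpha}}\circ D^{\alpha}_{0+}\circ M_{u^{-\alpha}b_y(\phi_u)}\circ K^{H},$$
where $M_{g}$ denotes multiplication by $g$. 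This reduces the claim to a statement about the composition of explicit operators, each of which I will analyze in turn.

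Next I would verify Hilbert--Schmidt estimates for groupings of these factors. Since $b\in C^3_b$, the function $b_y(\phi_\cdot)$ is bounded, and in the regular case the kernel $K^H(u,r)$ is square-integrable on $[0,1]^2$, so $K^H$ is Hilbert--Schmidt on $L^2([0,1])$. Although $u^{-\alpha}$ is unbounded at the origin, $\alpha=H-\tfrac{1}{2}<\tfrac{1}{2}$ places $u^{-\alpha}$ in $L^q$ for some $q>2$, and the regularizing effect of $K^H$ (whose image consists of functions vanishing like $u^{H}$ at $0$) absorbs this singularity, so that $M_{u^{-\alpha}b_y(\phi_u)}\,K^H$ remains Hilbert--Schmidt. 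The pairing $M_{s^{\alpha}}\circ D^{\alpha}_{0+}$ is handled via the Weyl representation together with the fractional-integral bound \eqref{fractional integral estimate}: the assumption $\phi^{(2)}-y\in\mathcal{H}^{2}$, combined with $b_y\in C^{1}_{b}$, forces $u\mapsto u^{-\alpha}b_y(\phi_u)K^H(u,r)$ to be H\"older continuous of an order exceeding $\alpha$, which is exactly what is needed for $D^{\alpha}_{0+}$ to output an $L^2$ function.

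To pass from Hilbert--Schmidt to nuclear I would then write $K(\tilde f)=\tfrac12(K(f)+K(f)^{*})$ and decompose each summand as a product of two Hilbert--Schmidt operators (a product which is automatically trace class). Concretely, one splits the Weyl integral into a pointwise ``diagonal'' contribution, handled by the bounded multiplier $M_{s^{\alpha}}\,M_{s^{-\alpha}b_y(\phi_s)}$ acting on the Hilbert--Schmidt operator $K^H$, and a ``regularized'' contribution where the H\"older cancellation in the numerator of the Weyl integrand tames the singularity $(s-u)^{-\alpha-1}$. Both resulting pieces are Hilbert--Schmidt, their composition is trace class, and symmetrization preserves this property, yielding the nuclearity of $K(\tilde f)$.

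The main obstacle will be the uniform-in-$r$ $L^{2}$ control of the Weyl integral
$$\int_{0}^{s}\frac{s^{-\alpha}b_y(\phi_s)K^{H}(s,r)\mathbb{I}_{r\le s}-u^{-\alpha}b_y(\phi_u)K^{H}(u,r)}{(s-u)^{\alpha+1}}\,du,$$
because the numerator inherits a $u^{-\alpha}$ singularity at $u=0$ while the denominator is non-integrable at $u=s$. The delicate point is to combine the H\"older regularity of $u\mapsto u^{-\alpha}b_y(\phi_u)K^H(u,r)$ (of order exceeding $\alpha$, furnished by $\phi^{(2)}-y\in\mathcal{H}^{2}$ in the regular regime and by $b\in C_b^{3}$) with the fact that $K^{H}(u,r)\to 0$ as $u\downarrow r$, so that the two singularities combine into an integrable kernel in $(s,r,u)$. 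Once this is established uniformly in $r$, the Hilbert--Schmidt bounds outlined above follow and the nuclearity argument closes, exactly as in the adaptation of Lemma 13 of \cite{MN02} to the present weighted setting.
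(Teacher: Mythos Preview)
The paper itself omits the proof and simply refers to Lemma~14 of \cite{MN02} (you cite Lemma~13, but in \cite{MN02} Lemma~13 is the singular case $H<\tfrac12$; the regular-case analogue you want is Lemma~14). Your factorization $K(f)=M_{s^{\alpha}}\circ D^{\alpha}_{0+}\circ M_{u^{-\alpha}b_y(\phi_u)}\circ K^{H}$ and the plan to exploit the Hilbert--Schmidt property of $K^{H}$ are the right starting points and are in the spirit of \cite{MN02}.

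The genuine gap is in your final step. The Weyl representation splits $D^{\alpha}_{0+}$, and hence $K(f)$, into a \emph{sum} of a diagonal piece and an integral piece, not a \emph{composition}. Establishing that both summands are Hilbert--Schmidt only shows that $K(f)$ is Hilbert--Schmidt, which is strictly weaker than nuclear; your sentence ``Both resulting pieces are Hilbert--Schmidt, their composition is trace class'' conflates the additive Weyl decomposition with a product structure that is not there. To close the argument you must show that \emph{each} summand is itself trace class. For the diagonal piece $M_{b_y(\phi)}K^{H}$ this can be done because, in the regular regime $H>\tfrac12$, the singular values of $K^{H}$ decay like $n^{-(H+1/2)}$ and are therefore summable, so $K^{H}$ is already trace class and a bounded multiplier preserves this. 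The integral piece, however, needs a genuine product-of-two-HS factorization: the route in \cite{MN02} is to insert the explicit fractional-integral form of $K^{H}$ (cf.\ \eqref{the representation of K^H}) so that the $D^{\alpha}_{0+}$ partially cancels an $I^{\alpha}$ inside $K^{H}$, leaving a bounded operator composed with a Hilbert--Schmidt remainder. That cancellation is the substantive content of the nuclearity proof, and it is exactly the step your sketch elides.
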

\begin{proof}
	The proof of this lemma is similar to Lemma $14$ of \cite{MN02}. So the proof will be omitted.
\end{proof}
\section{Proof of Theorem \ref{th:singular case} and \ref{th:regular case}}
In this section, we will derive Onsager-Machlup action functional for DSDE $(\ref{dsde})$ under singular case ($H<\frac{1}{2}$) and regular case ($H>\frac{1}{2}$).

\subsection{Proof of Theorem \ref{th:singular case}}
We will prove the theorem for H$\mathrm{\ddot{o}}$lder norm. The proof is the same for the supremum norm. Recall operator $(K^H)^{-1}$ is defined by
$$\Big(\big(K^H\big)^{-1}h\Big)(s)=s^{-\alpha}\big(I^{\alpha}_{0+}u^{\alpha}h'\big)(s).$$
where $\a=\frac{1}{2}-H$ when $H<\frac{1}{2}.$
For simplicity of presentation, we define the error between $\tilde{X}$ and $\phi^{(1)}$ as:
$$\kappa_s=\tilde{X}_s-\phi^{(1)}_s, 0\leq s\leq 1.$$
\\
Then similar to the proof of Lemma \ref{equivalence}, we have
\begin{align}
	\|\kappa\|_{\infty} \leq C(L_1)\|B^H\|_{\infty},\nonumber	
\end{align}
and
\begin{align}\label{holder of kappa}
	\|\kappa\|_{\b} \leq C(L_1)\|B^H\|_{\b}.
\end{align}
So we can rewrite small probability \eqref{all}.
\begin{align}\label{small all}
	&P(\|Y-\phi^{(2)}\|_{\beta}\leq\varepsilon)\nonumber\\&=E\Big(\exp(I_1+I_2+I_3+I_4)\mathbb{I}_{\|B^H\|\leq\varepsilon}\Big)\times\exp\Big(-\frac{1}{2}\int_{0}^{1}\big|\dot{\phi}^{(2)}_s-s^{-\a}\big(I^{\a}_{0+}u^{\a}b(\phi_u)\big)(s)\big|^2ds\Big),
\end{align}
where
\begin{align}
	I_1&=\int_{0}^{1}s^{-\alpha}\big(I^{\alpha}_{0+}u^{\alpha}b(\phi^{(1)}_u+\kappa_u,\phi^{(2)}_u+B_u^H)\big)(s)dW_s,\nonumber\\
	I_2&=\int_{0}^{1}-\dot{\phi}^{(2)}_sdW_s,\nonumber\\
	I_3&=\int_{0}^{1}\dot{\phi}^{(2)}_s\cdot\Big(s^{-\alpha}\big(I^{\alpha}_{0+}u^{\alpha}\big(b(\phi^{(1)}_u+\kappa_u,\phi^{(2)}_u+B_u^H)-b(\phi_u)\big)\big)(s)\Big)ds,\nonumber\\I_4&=\frac{1}{2}\int_{0}^{1}\Bigg(\Big(s^{-\alpha}\big(I^{\alpha}_{0+}u^{\alpha}b(\phi_u)\big)(s)\Big)^2-\Big(s^{-\a}\big(I^{\alpha}_{0+}u^{\alpha}b(\phi^{(1)}_u+\kappa_u,\phi^{(2)}_u+B_u^H)\big)(s)\Big)^2\Bigg)ds,\nonumber\\\nonumber
\end{align}
Then by applying Lemma \ref{Separation lemma}, we could deal with each term independently.

$\clubsuit \ \text{Term}$ \ $I_2$

Applying Theorem \ref{no random function} to $f=-c\dot{\phi}^{(2)}_s$ and Lemma \ref{norm}, we have
\begin{align}\label{I_2}
	\limsup _{\varepsilon \rightarrow 0}\ E(\exp(cI_2)|\|B^H\|_{\beta}<\varepsilon)\leq1,
\end{align}
for every real number $c$.

$\clubsuit  \ \text{Term}$ \ $I_3$

So under the condition $\|B^H\|_{\beta}<\varepsilon$, by using the fact $\sigma,b$ are Lipschitz continuous and bounded with constant $L_1,L_2$ respectively, we have that (where $v=\frac{u}{s}$)
\begin{align}\label{ready1}
	&\Big|s^{-\alpha}\big(I^{\alpha}_{0+}u^{\alpha}\big(b(\phi^{(1)}_u+\kappa_u,\phi^{(2)}_u+B_u^H)-b(\phi_u)\big)\big)(s)\Big|\nonumber\\=&\frac{1}{\G(\a)}s^{-\alpha}\Bigg|\int_{0}^{s}u^{\alpha}(s-u)^{\a-1}\Big(b(\phi^{(1)}_u+\kappa_u,B_u^H+\phi_u^{(2)})-b(\phi_u)\Big)du\Bigg|\nonumber\\\leq&\frac{C(L_1,L_2)}{\G(\a)} s^{-\alpha}\int_{0}^{s}u^{\alpha}(s-u)^{\a-1}\big(\|\kappa\|_{\b}+\|B^H\|_{\b}\big)du\nonumber\\=&\frac{2C(L_1,L_2)}{\G(\a)}\varepsilon s^{\alpha}\int_{0}^{1}v^{\alpha}(1-v)^{\a-1}dv=2C(L_1,L_2)\frac{\b(1+\a,\a)}{\G(\a)} s^{\alpha}\varepsilon.
\end{align}
We now deal with the term $I_3$.
\begin{align}
	|I_3|&=\Bigg|\int_{0}^{1}\dot{\phi}^2_s\Big(s^{-\alpha}\big(I^{\alpha}_{0+}u^{\alpha}\big(b(\phi^{(1)}_u+\kappa_u,\phi^{(2)}_u+B_u^H)-b(\phi_u)\big)\big)(s)\Big)ds\Bigg|\nonumber\\&\leq 2C(L_1,L_2)\frac{\b(1+\a,\a)}{\G(\a)} s^{\alpha}\varepsilon \int_{0}^{1}s^{\a}|\dot{\phi}^{(2)}_s|ds\nonumber\\&\leq C(L_1,L_2,\alpha)\varepsilon.\nonumber
\end{align}
Hence
\begin{align}\label{I_3}
	\limsup _{\varepsilon \rightarrow 0}\ E(\exp(cI_3)|\|B^H\|<\varepsilon)\leq1,
\end{align}
for every real number $c$.

$\clubsuit \ \text{Term} $\ $I_4$

For the term $I_4$, we have
\begin{align}
	|I_4|\leq& \frac{1}{2}\int_{0}^{1}\Bigg|\Big(s^{-\alpha}\big(I^{\alpha}_{0+}u^{\alpha}b(\phi_u)\big)(s)\Big)^2-\Big(s^{-\a}\big(I^{\alpha}_{0+}u^{\alpha}b(\phi^{(1)}_u+\kappa_u,\phi^{(2)}_u+B_u^H)\big)(s)\Big)^2\Bigg|ds\nonumber\\\leq& \frac{1}{2}\int_{0}^{1}\Bigg(s^{-\alpha}\Big(I^{\alpha}_{0+}u^{\alpha}\big(b(\phi^{(1)}_u+\kappa_u,\phi^{(2)}_u+B_u^H)-b(\phi_u)\big)\Big)(s)\Bigg)^2ds\nonumber\\&+\int_{0}^{1}\Bigg|s^{-\a}\Big(I^{\a}_{0+}u^{\a}\big(b(\phi^{(1)}_u+\kappa_u,B_u^H+\phi^{(2)}_u)-b(\phi_u)\big)\Big)(s)\cdot s^{-\a}\big(I^{\a}_{0+}u^{\a}b(\phi_u)\big)(s)\Bigg|ds\nonumber\\\nonumber:=& I_{41}+I_{42}.
\end{align}
Using \eqref{ready1} we obtain
\begin{align}
	|I_{41}|=&\frac{1}{2}\int_{0}^{1}\Bigg(s^{-\alpha}\Big(I^{\alpha}_{0+}u^{\alpha}\big(b(\phi^{(1)}_u+\kappa_u,\phi^{(2)}_u+B_u^H)-b(\phi_u)\big)\Big)(s)\Bigg)^2ds\nonumber\\\leq& C^2(L_1,L_2)\frac{\b^2(1+\a,\a)}{(2\a+1)\G(\a)^2}\varepsilon^2,\nonumber
\end{align}
and from \eqref{fractional integral estimate}, \eqref{ready1} and $b$ is bounded, we have ($p=1,f=s^{\a}b(\phi_u)$)
\begin{align}
	|I_{42}|=&\int_{0}^{1}\Bigg|s^{-\a}\Big(I^{\a}_{0+}u^{\a}\big(b(\phi^{(1)}_u+\kappa_u,B_u^H+\phi^{(2)}_u)-b(\phi_u)\big)\Big)(s)s^{-\a}\big(I^{\a}_{0+}u^{\a}b(\phi_u)\big)(s)\Bigg|ds\nonumber\\\leq&2C(L_1,L_2)\frac{\beta(1+\a,\a)}{\G(\a)}\varepsilon\int_{0}^{1}\big(I^{\a}_{0+}u^{\a}b(\phi_u)\big)(s)ds\nonumber\\\leq&2C(L_1,L_2)\frac{\beta(1+\a,\a)}{\a\G(\a)^2}\varepsilon\int_{0}^{1}s^{\a}b(\phi_s)ds\leq C(L_1,L_2,\a)\varepsilon. \nonumber
\end{align}
As a consequence, by Lemma \ref{Separation lemma} we get that
\begin{align}\label{I_4}
	\limsup _{\varepsilon \rightarrow 0}\ E(\exp(cI_4)|\|B^H\|<\varepsilon)\leq1,
\end{align}
for every real number $c$.

$\clubsuit \ \text{Term}$ \ $I_1$

Applying classical Taylor expansion to $b(\phi^{(1)}_s+\kappa_s,B_s^H+\phi^{(2)}_s)$ at $\phi$ we have
\begin{align}
	b(\phi^{(1)}_s+\kappa_s,B_s^H+\phi^{(2)}_s)=b(\phi_s)+b_x(\phi_s)\kappa_s+b_y(\phi_s)B_s^H+R_s^{x,y},\nonumber
\end{align}
where $R^{x,y}$ denotes the remainder term. If $\|B^H\|_{\b}\leq \varepsilon$, by Young's inequality we have that
\begin{align}\label{the bound of remainder term}
	\|R\|_{\infty}\leq C(L_1)\varepsilon^2.
\end{align}
We now rewrite the term $I_1$.
\begin{align}\label{combo of I_1}
	I_1&=\int_{0}^{1}s^{-\alpha}\big(I^{\alpha}_{0+}u^{\alpha}b(\phi^{(1)}_u+\kappa_u,\phi^{(2)}_u+B_u^H)\big)(s)dW_s\nonumber\\&=\int_{0}^{1}s^{-\alpha}\Big(I^{\alpha}_{0+}u^{\alpha}\big(b(\phi_u)+b_x(\phi_u)\kappa_u+b_y(\phi_u)B_u^H+R_u^{x,y}\big)\Big)(s)dW_s\nonumber\\&:=I_{11}+I_{12}+I_{13}+I_{14},
\end{align}
Applying Theorem \ref{no random function} to $f=cs^{-\a}\big(I^{\a}_{0+}u^{\a}b(\phi_u)\big)(s)$ and Lemma \ref{Separation lemma}, we have
\begin{align}\label{I_{11}}
	\limsup _{\varepsilon \rightarrow 0}\ E(\exp(cI_{11})|\|B^H\|_{\beta}<\varepsilon)\leq1,
\end{align}
for every real number $c$.\\
We now deal with the term $I_{12}$. By using the fact $b_x$ is bounded and $\|\kappa\|_{\b} \leq C(L_1)\|B^H\|_{\b}$ so we have
\begin{align}
	I_{12}=&\int_{0}^{1}s^{-\alpha}\Big(I^{\alpha}_{0+}u^{\alpha}\big(b_x(\phi_u)\kappa_u\big)\Big)(s)dW_s\nonumber\\=&\frac{1}{\G(\a)}\int_{0}^{1}s^{-\a}\int_{0}^{s}u^{\a}b_x(\phi_u)\kappa_u(s-u)^{\a-1}dudW_s\nonumber\\=&\frac{1}{\G(\a)}\int_{0}^{1}u^{\a}b_x(\phi_u)\kappa_u\int_{u}^{1}s^{-\a}(s-u)^{\a-1}dW_sdu,&\nonumber\\\leq&\max\Big\{\frac{1}{\G(\a)}\int_{0}^{1}u^{\a}C\varepsilon\int_{u}^{1}s^{-\a}(s-u)^{\a-1}dW_sdu, -\frac{1}{\G(\a)}\int_{0}^{1}u^{\a}C\varepsilon\int_{u}^{1}s^{-\a}(s-u)^{\a-1}dW_sdu\Big\},\nonumber
\end{align}
Then we further obtain
\begin{align}
	0\leq |I_{12}|&\leq \Big|\frac{1}{\G(\a)}\int_{0}^{1}u^{\a}C\varepsilon\int_{u}^{1}s^{-\a}(s-u)^{\a-1}dW_sdu\Big|\nonumber\\&\leq\Big|\frac{1}{\G(\a)}\int_{0}^{1}C\varepsilon s^{-\a}\int_{0}^{s}u^{\a} (s-u)^{\a-1}dudW_s\Big|\nonumber\\&=\Big|\int_{0}^{1}C\varepsilon s^{\a}\frac{\b(1+\a,\a)}{\G(\a)}dW_s\Big|.\nonumber
\end{align}
Applying Theorem \ref{no random function 2} to $h=C\varepsilon s^{\a}\frac{\b(1+\a,\a)}{\G(\a)}$, we have
\begin{align}\label{I_{12}}
	\limsup _{\varepsilon \rightarrow 0}\ E(\exp(cI_{12})|\|B^H\|<\varepsilon)\leq1,
\end{align}
for every real number $c$.
\\
In order to study the limit behavior of the conditional exponential moments of the term $I_{13}$. We will express $I_{13}$ as a double stochastic integral with respect to $W$ based on the integral representation of fractional Brownian motion $B^H$, so we obtain
\begin{align}
	I_{13}=&\int_{0}^{1}s^{-\alpha}\Big(I^{\alpha}_{0+}u^{\alpha}\big(b_y(\phi_u)B^H_u\big)\Big)(s)dW_s\nonumber\\=&\frac{1}{\G(\a)}\int_{0}^{1}s^{-\a}\int_{0}^{s}u^{\a}b_y(\phi_u)B^H_u(s-u)^{\a-1}dudW_s\nonumber\\=&\frac{1}{\G(\a)}\int_{0}^{1}s^{-\a}\int_{0}^{s}u^{\a}b_y(\phi_u)(s-u)^{\a-1}\int_{0}^{u}K^{H}(u,r)dW_rdudW_s\nonumber\\=&\frac{1}{\G(\a)}\int_{0}^{1}s^{-\a}\int_{0}^{s}\int_{r}^{s}u^{\a}b_y(\phi_u)(s-u)^{\a-1}K^{H}(u,r)dudW_rdW_s\nonumber\\=&\int_{0}^{1}\int_{0}^{1}f(s,r)dW_rdW_s=\int_{0}^{1}\int_{0}^{1}\tilde{f}(s,r)dW_rdW_s,\nonumber
\end{align}
where $\tilde{f}$ is the symmetrization of the function ($\tilde f(s,r)=\tilde f(r,s)$)
$$
f(s,r)=\frac{1}{\G(\a)}s^{-\a}\int_{r}^{s}u^{\a}b_y(\phi_u)(s-u)^{\a-1}K^{H}(u,r)du \ \mathbb{I}_{s\geq r}.
$$
By Lemma \ref{trace class of singular case} the operator $K(\tilde{f})$ is nuclear. So the trace of this operator can be obtained as
$$Tr\tilde{f}=\int_{0}^{1}\tilde{f}(s,s)ds=\frac{1}{2}\int_{0}^{1}f(s,s)ds.$$
Note that the function $\tilde{f}$ is not continuous on the axes, but the result of \cite{Bal76} still holds in this case taking into account the particular form of the function $f$. In order to compute the integral $\int_{0}^{1}f(s,s)ds.$ Let us rewrite $f(s,r)$ by the expression \eqref{K^H(r,u)} of the kernel $K^H$,
\begin{align}
	f(s,r)=&\frac{1}{\G(\a)}s^{-\a}\int_{r}^{s}u^{\a}b_y(\phi_u)(s-u)^{\a-1}K^{H}(u,r)du \ \mathbb{I}_{s\geq r}\nonumber\\=&\frac{c_H}{\G(\a)}s^{-\a}\int_{r}^{s}u^{\a}b_y(\phi_u)(s-u)^{\a-1}(u-r)^{-\a}du \ \mathbb{I}_{s\geq r}\nonumber\\&+\frac{c_H\a}{\G(\a)}s^{-\a}\int_{r}^{s}\int_{r}^{u}u^{\a}b_y(\phi_u)(s-u)^{\a-1}(\theta-r)^{-\a-1}\big(1-(\frac{r}{\theta})^{\a}\big)d\theta du \ \mathbb{I}_{s\geq r}\nonumber\\:=&(f_1+f_2)(s,r).\nonumber
\end{align}
The change of variable $w=\frac{u-r}{s-r}$ yields
\begin{align}
	f_1(s,r)=&\frac{c_H}{\G(\a)}s^{-\a}\int_{r}^{s}u^{\a}b_y(\phi_u)(s-u)^{\a-1}(u-r)^{-\a}du \ \mathbb{I}_{s\geq r}\nonumber\\=&\frac{c_H}{\G(\a)}s^{-\a}\int_{0}^{1}((s-r)w+r)^{\a}b_y(\phi_{(s-r)w+r)}(1-w)^{\a-1}w^{-\a}dw \ \mathbb{I}_{s\geq r},\nonumber
\end{align}
and hence, we have
\begin{align}
	f_1 (s,s)=\frac{c_H}{\G(\a)}b_y(\phi_{s})\int_{0}^{1}(1-w)^{\a-1}w^{-\a}dw=c_H\G(1-\a)b_y(\phi_{s}).\nonumber
\end{align}
On the other hand,  the change of variable $v=\frac{\theta-r}{u-r}$ yields
\begin{align}
	f_2(s,r)=&\frac{c_H\a}{\G(\a)}s^{-\a}\int_{r}^{s}\int_{r}^{u}u^{\a}b_y(\phi_u)(s-u)^{\a-1}(\theta-r)^{-\a-1}\big(1-(\frac{r}{\theta})^{\a}\big)d\theta du \ \mathbb{I}_{s\geq r}\nonumber\\=&\frac{c_H\a}{\G(\a)}s^{-\a}\int_{r}^{s}u^{\a}b_y(\phi_u)(s-u)^{\a-1}(u-r)^{-\a}B(r,u)du \ \mathbb{I}_{s\geq r},\nonumber
\end{align}
where
$$B(r,u)=\int_{0}^{1}v^{-\a-1}\Big(1-\big(\frac{r}{(u-r)v+r}\big)\Big)^{\a}dv.$$
Introducing the change of variable $x=\frac{u-r}{s-r}$, we have
\begin{align}
	&f_2(s,r)=\frac{c_H\a}{\G(\a)}s^{-\a}\int_{0}^{1}\big((s-r)x+r\big)^{\a}b_y(\phi_{(s-r)x+r})(1-x)^{\a-1}x^{-\a}B(r,(s-r)x+r)dx \ \mathbb{I}_{s\geq r},\nonumber
\end{align}
so as variable $s=r$ we easily get
$$f_2(s,s)=\a c_H\G(1-\a)b_y(\phi_{s})B(s,s)=0.$$
As a consequence, we have
$$Tr(\tilde{f})=\frac{1}{2}\int_{0}^{1}f(s,s)ds=\frac{c_H\G(1-\a)}{2}\int_{0}^{1}b_y(\phi_{s})ds=\frac{d_H}{2}\int_{0}^{1}b_y(\phi_{s})ds.$$
To summarized what we have proved, Lemma \ref{key lemma} and Lemma \ref{norm} give us
\begin{align}\label{I_{13}}
	\limsup _{\varepsilon \rightarrow 0}\ E(\exp(I_{13})|\|B^H\|<\varepsilon)=\exp\Big(-\frac{d_H}{2}\int_{0}^{1}b_y(\phi_{s})ds\Big).
\end{align}
Finally, it only remains to study the limit behavior of the term $I_{14}$. For any $c\in \mathbb{R}$ and $\delta>0$ we can write
\begin{align}
	E&\Big(\exp(cI_{14})|\|B^H\|\leq \varepsilon\Big)\nonumber\\&\leq e^{\delta}+\int_{\delta}^{\infty}e^{\xi}P(|cI_{14}|>\xi|\|B^H\|_{\b}\leq \varepsilon)d \xi \nonumber\\&+e^{\delta}\mathbb{P}(|cI_{14}|>\delta |\|B^H\|_{\b}\leq \varepsilon).\nonumber
\end{align}
Define the martingale $M_t=c\int_{0}^{t}s^{-\a}\Big(I^{\alpha}_{0+}u^{\alpha}R_u^{x,y}\Big)(s)dW_s$ whose quadratic variations can be estimated by \eqref{the bound of remainder term} as follows,
\begin{align}
	\langle M \rangle_t=&c^2\int_{0}^{t}\Big(s^{-\alpha}\Big(I^{\alpha}_{0+}u^{\alpha}R_u^{x,y}\Big)(s)\Big)^2ds\nonumber\\ &\leq \frac{c^2 C(L_1)^2\b(\a,\a+1)^2}{(1+2\a)(\b(\a))^2}\varepsilon^4=C(L_1,\a)\varepsilon^4.\nonumber
\end{align}
Applying the exponential inequality for martingales, we have
\begin{align}\label{exponential inequality for martingales1}
	\mathbb{P}\Big(\Big|c\int_{0}^{t}s^{-\a}\Big(I^{\alpha}_{0+}u^{\alpha}R_u^{x,y}\Big)(s)dW_s\Big|>\xi, \|B^H\|_{\b}\leq \varepsilon\Big)\leq \exp\Big(-\frac{\xi^2}{2C(L_1,\a)\varepsilon^{4}}\Big),
\end{align}
for every real number $c$.\\
Combining Lemma \ref{small ball of holder} and inequality \eqref{exponential inequality for martingales1}, we see that
\begin{align}
	\mathbb{P}&\Big(\Big|c\int_{0}^{1}s^{-\a}\Big(I^{\alpha}_{0+}u^{\alpha}R_u^{x,y}\Big)(s)dW_s\Big|>\xi,\Big|\|B^H\|_{\b}\leq \varepsilon\Big)\nonumber\\&\leq \exp\Big(-\frac{\xi^2}{2C(L_1,\a)\varepsilon^{4}}\Big)\exp\big(	C_H \varepsilon^{-\frac{1}{H-\b}}\big).\nonumber
\end{align}
\\
Using the latter estimate we have for every $\delta>0$ and every $0<\varepsilon<1$
\begin{align}
	E&\Big(\exp(cI_{14})|\|B^H\|_{\beta}\leq \varepsilon\Big)\nonumber\\&\leq e^{\delta}+\int_{\delta}^{\infty}\exp\Big\{\xi-\frac{\xi^2}{2C(L_1,\a)\varepsilon^{4}}+C_H \varepsilon^{-\frac{1}{H-\b}}\Big\}d \xi \nonumber\\&+\exp\Big\{\delta-\frac{\delta^2}{2C(L_1,\a)\varepsilon^{4}}+C_H \varepsilon^{-\frac{1}{H-\b}}\Big\}.\nonumber
\end{align}
Letting $\varepsilon$ and then $\delta$ tend to zero, we obtain
\begin{align}\label{I_{14}}
	\limsup _{\varepsilon \rightarrow 0}\ E(\exp(cI_{14})|\|B^H\|_{\b}<\varepsilon)\leq1,
\end{align}
for every real number $c$. \\
Finally, we can summarize what we have derived, \eqref{small all},\eqref{I_2}, \eqref{I_3}, \eqref{I_4}, \eqref{combo of I_1},\eqref{I_{11}},\eqref{I_{12}},\eqref{I_{13}} and \eqref{I_{14}} give us
$$
\lim _{\varepsilon \rightarrow 0} \frac{\mathbb{P}(\|Y-\phi^{(2)}\|_{\b}<\varepsilon)}{\mathbb{P}(\|B^H\|_{\b}<\varepsilon)}=\exp \left(-\frac{1}{2}\int_{0}^{1}\big|\dot{\phi}^{(2)}_s-s^{-\a}\big(I^{\a}_{0+}u^{\a}b(\phi_u)\big)(s)\big|^2ds-\frac{d_H}{2}\int_{0}^{1}b_y(\phi_{s})ds\right).
$$
The proof of Theorem \ref{th:singular case} is complete.
\qed
\subsection{Proof of Theorem \ref{th:regular case}}
The proof of this theorem can be completed by the method analogous to that used above. Recall operator $(K^H)^{-1}$ is defined by
$$\Big(\big(K^H\big)^{-1}h\Big)(s)=s^{\alpha}\big(D^{\alpha}_{0+}u^{-\alpha}h'\big)(s).$$
where $\a=H-\frac{1}{2}$ when $H>\frac{1}{2}.$ So we can rewrite small probability \eqref{all} as
\begin{align}\label{small all 2}
	&P(\|Y-\phi^{(2)}\|_{\beta}\leq\varepsilon)\nonumber\\&=E\Big(\exp(J_1+J_2+J_3+J_4)\mathbb{I}_{\|B^H\|\leq\varepsilon}\Big)\times\exp\Big(-\frac{1}{2}\int_{0}^{1}\big|\dot{\phi}^2_s-s^{\a}\big(D^{\a}_{0+}u^{-\a}b(\phi_u)\big)(s)\big|^2ds\Big),
\end{align}
where
\begin{align}
	J_1&=\int_{0}^{1}s^{\alpha}\big(D^{\alpha}_{0+}u^{-\alpha}b(\phi^{(1)}_u+\kappa_u,\phi^{(2)}_u+B_u^H)\big)(s)dW_s,\nonumber\\
	J_2&=\int_{0}^{1}-\dot{\phi}^{(2)}_sdW_s,\nonumber\\
	J_3&=\int_{0}^{1}\dot{\phi}^{(2)}_s\cdot\Big(s^{\alpha}\big(D^{\alpha}_{0+}u^{-\alpha}\big(b(\phi^{(1)}_u+\kappa_u,\phi^{(2)}_u+B_u^H)-b(\phi_u)\big)\big)(s)\Big)ds,\nonumber\\J_4&=\frac{1}{2}\int_{0}^{1}\Bigg(\Big(s^{\alpha}\big(D^{\alpha}_{0+}u^{\alpha}b(\phi_u)\big)(s)\Big)^2-\Big(s^{\a}\big(D^{\alpha}_{0+}u^{\alpha}b(\phi^{(1)}_u+\kappa_u,\phi^{(2)}_u+B_u^H)\big)(s)\Big)^2\Bigg)ds,\nonumber\\\nonumber
\end{align}
Then by applying Lemma \ref{Separation lemma}, we could deal with each term independently.

$\clubsuit \ \text{Term}$ \ $J_2$

Applying Theorem \ref{no random function} to $f=-c\dot{\phi}^{(2)}_s$ and Lemma \ref{norm}, we have
\begin{align}\label{J_2}
	\limsup _{\varepsilon \rightarrow 0}\ E(\exp(cJ_2)|\|B^H\|_{\beta}<\varepsilon)\leq1,
\end{align}
for every real number $c$.

$\clubsuit \ \text{Term}$ \ $J_3$

Using the Weyl representation \eqref{weyl representation} for the fractional derivative we have that
\begin{align}
	\Big|&s^{\alpha}\big(D^{\alpha}_{0+}u^{-\alpha}\big(b(\phi^{(1)}_u+\kappa_u,\phi^{(2)}_u+B_u^H)-b(\phi_u)\big)\big)(s)\Big|\nonumber\\&=\frac{1}{\G(1-\a)}\Bigg|\frac{b(\phi^{(1)}_s+\kappa_s,\phi^{(2)}_s+B_s^H)-b(\phi_s)\big)}{s^{\a}}\nonumber\\&+\a s^{\a}\int_{0}^{s}\frac{s^{-\a}\big(b(\phi^{(1)}_s+\kappa_s,\phi^{(2)}_s+B_s^H)-b(\phi_s)\big)}{(s-r)^{\a+1}}-\frac{r^{-\a}\big(b(\phi^{(1)}_r+\kappa_r,\phi^{(2)}_r+B_r^H)-b(\phi_r)\big)}{(s-r)^{\a+1}}dr\Bigg|\nonumber\\&\leq J_{31}+J_{32}+J_{33},\nonumber
\end{align}
where
\begin{align}
	J_{31}&=\frac{1}{\G(1-\a)}\Big|\frac{b(\phi^{(1)}_s+\kappa_s,\phi^{(2)}_s+B_s^H)-b(\phi_s)\big)}{s^{\a}}\Big|\nonumber\\J_{32}&=\frac{\a}{\G(1-\a)}\Bigg|\int_{0}^{s}\frac{\big(b(\phi^{(1)}_s+\kappa_s,\phi^{(2)}_s+B_s^H)-b(\phi_s)\big)-\big(b(\phi^{(1)}_r+\kappa_r,\phi^{(2)}_r+B_r^H)-b(\phi_r)\big)}{(s-r)^{\a+1}}\Bigg|\nonumber\\J_{33}&=\frac{\a s^{\a}}{\G(1-\a)}\Bigg|\int_{0}^{s}\frac{s^{-\a}-r^{-\a}}{(s-r)^{\a+1}}\big(b(\phi^{(1)}_r+\kappa_r,\phi^{(2)}_r+B_r^H)-b(\phi_r)\big)dr\Bigg|.\nonumber
\end{align}
So under the condition $\|B^H\|_{\beta}<\varepsilon$, from \eqref{holder of kappa} and using the fact that $b$ is Lipschitz continuous and bounded with constant $L_2$, we have that ($\b>\a$)
\begin{align}\label{J_{31}}
	J_{31}\leq \frac{(C(L_1)+1)L_2}{\G(1-\a)}\varepsilon.
\end{align}
On the other hand, from ($v=\frac{r}{s}$)
\begin{align}\label{integral transform}
	\int_{0}^{s}\frac{r^{-\a}-s^{-\a}}{(s-r)^{\a+1}}dr=s^{-2\a}\int_{0}^{1}\frac{v^{-\a}-1}{(1-v)^{\a+1}}dv:=C_{\a}s^{-2\a},
\end{align}
where
$C_{\a}$ is a constant depending on $\a$. So by \eqref{holder of kappa} we have
\begin{align}\label{J_{33}}
	J_{33}\leq& \frac{\a s^{\a}L_2}{\G(1-\a)}\int_{0}^{s}\frac{r^{-\a}-s^{-\a}}{(s-r)^{\a+1}}\Big(\big|B_r^H\big|+|\kappa_r|\Big)dr\nonumber\\\leq& \frac{C_{\a}\a s^{\b-\a}L_2}{\G(1-\a)}(1+C(L_1))\|B^H\|_{\b}\leq C(\a,L_1,L_2)\varepsilon.
\end{align}
It remains to study the limit behavior of the term $J_{32}$. To begin with, we give an integral equality.
\begin{align}
	&\Big(b(\phi^{(1)}_s+\kappa_s,\phi^{(2)}_s+B_s^H)-b(\phi_s)\Big)-\Big(b(\phi^{(1)}_r+\kappa_r,\phi^{(2)}_r+B_r^H)-b(\phi_r)\Big)\nonumber\\=&\Big(b(\phi^{(1)}_s+\kappa_s,\phi^{(2)}_s+B_s^H)-b(\phi^{(1)}_s, \phi^{(2)}_s+B^H_s)\Big)+\Big(b(\phi^{(1)}_s,\phi^{(2)}_s+B_s^H)-b(\phi^{(1)}_s, \phi^{(2)}_s)\Big)\nonumber\\-&\Big(b(\phi^{(1)}_r+\kappa_r,\phi^{(2)}_r+B_r^H)-b(\phi^{(1)}_r, \phi^{(2)}_r+B^H_r)\Big)-\Big(b(\phi^{(1)}_r,\phi^{(2)}_r+B_r^H)-b(\phi^{(1)}_r, \phi^{(2)}_r)\Big)\nonumber\\=&\int_{0}^{1}b_x(\lambda \kappa_s+\phi^{(1)}_s,\phi^{(2)}_s+B_s^H)d\lambda  \cdot\kappa_s+\int_{0}^{1}b_y(\phi^{(1)}_s,\phi^{(2)}_s+\mu B_s^H)d\mu \cdot B_s^H\nonumber\\-&\int_{0}^{1}b_x(\lambda \kappa_r+\phi^{(1)}_r,\phi^{(2)}_r+B_r^H)d\lambda\cdot  \kappa_r-\int_{0}^{1}b_y(\phi^{(1)}_r,\phi^{(2)}_r+\mu B_r^H)d\mu \cdot B_r^H\nonumber\nonumber\\=&\int_{0}^{1}\Big(b_x(\lambda \kappa_s+\phi^{(1)}_s,\phi^{(2)}_s+B_s^H)-b_x(\lambda \kappa_r+\phi^{(1)}_r,\phi^{(2)}_r+B_r^H)\Big)d\lambda \cdot \kappa_s+\int_{0}^{1}b_x(\lambda \kappa_r+\phi^{(1)}_r,\phi^{(2)}_r+B_r^H)d\lambda\cdot\Big(\kappa_s-\kappa_r\Big)\nonumber\\+&\int_{0}^{1}\Big(b_y(\phi^{(1)}_s,\phi^{(2)}_s+\mu B_s^H)-b_y(\phi^{(1)}_r,\phi^{(2)}_r+\mu B_r^H)\Big)d\lambda\cdot B^H_s+\int_{0}^{1}b_y(\phi^{(1)}_r,\phi^{(2)}_r+\mu B_r^H)d\lambda\cdot\Big(B^H_s-B^H_r\Big)\nonumber
\end{align}
Hence, using that $b_x,b_y$ are bounded and Lipschitz with constant $L_3, L_4$, and $\phi=(\phi^1,\phi^2)$ is $H$-H$\mathrm{\ddot{o}}$lder continuous yields
\begin{align}
	&\Bigg|\Big(b(\phi^{(1)}_s+\kappa_s,\phi^{(2)}_s+B_s^H)-b(\phi_s)\Big)-\Big(b(\phi^{(1)}_r+\kappa_r,\phi^{(2)}_s+B_r^H)-b(\phi_r)\Big)\Bigg|\nonumber\\=&\Bigg|\int_{0}^{1}\Big(b_x(\lambda \kappa_s+\phi^{(1)}_s,\phi^{(2)}_s+B_s^H)-b_x(\lambda \kappa_r+\phi^{(1)}_r,\phi^{(2)}_r+B_r^H)\Big)d\lambda\cdot \kappa_s\nonumber\\&+\int_{0}^{1}b_x(\lambda \kappa_r+\phi^{(1)}_r,\phi^{(2)}_r+B_r^H)d\lambda\cdot\Big(\kappa_s-\kappa_r\Big)\nonumber\\&+\int_{0}^{1}\Big(b_y(\phi^{(1)}_s,\phi^{(2)}_s+\mu B_s^H)-b_y(\phi^{(1)}_r,\phi^{(2)}_r+\mu B_r^H)\Big)d\mu\cdot B^H_s+\int_{0}^{1}b_y(\phi^{(1)}_r,\phi^{(2)}_r+\mu B_r^H)d\mu\cdot\Big(B^H_s-B^H_r\Big)\nonumber\Bigg|\\&\leq L_3\Big(|\phi_s^{(1)}-\phi_r^{(1)}|+|\phi_s^{(2)}-\phi_r^{(2)}|+\frac{1}{2}|\kappa_s-\kappa_r|+|B_s^H-B_r^H|\Big)|\kappa_s|+\|b_x\|_{\infty}|\kappa_s-\kappa_r|\nonumber\\&+L_4\Big(|\phi_s^{(1)}-\phi_r^{(1)}|+|\phi_s^{(2)}-\phi_r^{(2)}|+\frac{1}{2}|B_s^H-B_r^H|\Big)|B_s^H|+\|b_y\|_{\infty}|B_s^H-B_r^H|\nonumber\\&\leq L_3\Big(\|\phi\|_{H}(s-r)^H+\frac{1}{2}\|\kappa\|_{\beta}(s-r)^{\b}+\|B^H\|_{\b}(s-r)^{\b}\Big)\|\kappa\|_{\b}s^{\b}+\|b_x\|_{\infty}\|\kappa\|_{\b}(s-r)^{\b}\nonumber\\&+ L_4\Big(\|\phi\|_H(s-r)^H+\frac{1}{2}\|B^H\|_{\b}(s-r)^{\b}\Big)\|B^H\|_{\b}s^{\b}+\|b_y\|_{\infty}\|B^H\|_{\b}(s-r)^{\b}\nonumber\\&\leq L_3C(L_1)\Big(\|\phi\|_{H}(s-r)^H+(\frac{C(L_1)}{2}+1)\|B^H\|_{\beta}(s-r)^{\b}\Big)\|B^H\|_{\b}s^{\b}+C(L_1)\|b_x\|_{\infty}\|B^H\|_{\b}(s-r)^{\b}\nonumber\\&+ L_4\Big(\|\phi\|_H(s-r)^H+\frac{1}{2}\|B^H\|_{\b}(s-r)^{\b}\Big)\|B^H\|_{\b}s^{\b}+\|b_y\|_{\infty}\|B^H\|_{\b}(s-r)^{\b}\nonumber.
\end{align}
where \eqref{holder of kappa} is used. Therefore we have ($u=\frac{r}{s}$)
\begin{align}\label{J_{32}}
	J_{32}&=\frac{\a}{\G(1-\a)}\Bigg|\int_{0}^{s}\frac{\big(b(\phi^{(1)}_s+\kappa_s,\phi^{(2)}_s+B_s^H)-b(\phi_s)\big)-\big(b(\phi^{(1)}_r+\kappa_r,\phi^{(2)}_r+B_r^H)-b(\phi_r)\big)}{(s-r)^{\a+1}}dr\Bigg|\nonumber\\&\leq \frac{\a}{\G(1-\a)}\Bigg|\int_{0}^{s}\frac{L_3C(L_1)\Big(\|\phi\|_{H}(s-r)^H+(\frac{C(L_1)}{2}+1)\|B^H\|_{\beta}(s-r)^{\b}\Big)\|B^H\|_{\b}s^{\b}}{(s-r)^{\a+1}}\nonumber\\&+\frac{C(L_1)\|b_x\|_{\infty}\|B^H\|_{\b}(s-r)^{\b}+L_4\Big(\|\phi\|_H(s-r)^H+\frac{1}{2}\|B^H\|_{\b}(s-r)^{\b}\Big)\|B^H\|_{\b}s^{\b}}{(s-r)^{\a+1}}\nonumber\\&+\frac{\|b_y\|_{\infty}\|B^H\|_{\b}(s-r)^{\b}}{(s-r)^{\a+1}}dr\Bigg|\nonumber\\&\leq C(\a,L_1,L_3, \phi)\big|\int_{0}^{s}\frac{(s-r)^H s^{\b}}{(s-r)^{\a+1}}dr\big|\|B^H\|_{\b}+C(\a,L_1,L_3)\big|\int_{0}^{s}\frac{(s-r)^{\b}s^{\b}}{(s-r)^{\a+1}}dr\big|\|B^H\|_{\b}\nonumber\\&+C(\a,L_1,b_x)\big|\int_{0}^{s}\frac{(s-r)^{\b}}{(s-r)^{\a+1}}dr\big|\|B^H\|_{\b}+C(\a,L_4,\phi)\big|\int_{0}^{s}\frac{(s-r)^H s^{\b}}{(s-r)^{\a+1}}dr\big|\|B^H\|_{\b}\nonumber\\&+C(\a,L_4)\big|\int_{0}^{s}\frac{(s-r)^{\b}s^{\b}}{(s-r)^{\a+1}}dr\big|\|B^H\|_{\b}+C(\a,b_y)\big|\int_{0}^{s}\frac{(s-r)^{\b}}{(s-r)^{\a+1}}dr\big|\|B^H\|_{\b}\nonumber\\&\leq\Bigg[ C(\a,L_1,L_3,\phi)s^{H+\b-\a}\int_{0}^{1}(1-u)^{H-\a-1}du+C(\a,L_1,L_3)s^{2\b-\a}\int_{0}^{1}(1-u)^{\b-\a-1}du\nonumber\\&+C(\a,L_1,b_x)s^{\b-\a}\int_{0}^{1}(1-u)^{\b-\a-1}du+C(\a,L_4,\phi)s^{H+\b-\a}\int_{0}^{1}(1-u)^{H-\a-1}du\nonumber\\&+C(\a,L_4)s^{2\b-\a}\int_{0}^{1}(1-u)^{\b-\a-1}du+C(\a,b_y)s^{\b-\a}\int_{0}^{1}(1-u)^{\b-\a-1}du\Bigg]\|B^H\|_{\b}\nonumber\\&\leq C(\a,\b,L_1,L_3,L_4,\phi,b_x,b_y)\varepsilon.
\end{align}
Hence, it follows from estimates \eqref{J_{31}}, \eqref{J_{33}} and \eqref{J_{32}} that
\begin{align}\label{the following need}
	\Big|s^{\alpha}\big(D^{\alpha}_{0+}u^{-\alpha}\big(b(\phi^{(1)}_u+\kappa_u,\phi^{(2)}_u+B_u^H)-b(\phi_u)\big)\big)(s)\Big|\leq C(L_1,L_2,L_3,L_4,\a,\b,\phi,b_x,b_y)\varepsilon.
\end{align}
Therefore,
\begin{align}\label{J_3}
	\limsup _{\varepsilon \rightarrow 0}\ E(\exp(cJ_3)|\|B^H\|_{\beta}<\varepsilon)\leq1,
\end{align}
for every real number $c$.

$\clubsuit \ \text{Term}$ \ $J_4$\\
By inequality $|a^2-b^2|\leq (a-b)^2+2|(a-b)b|$, we have
\begin{align}
	|J_4|&=\frac{1}{2}\int_{0}^{1}\Bigg|\Big(s^{\alpha}\big(D^{\alpha}_{0+}u^{\alpha}b(\phi_u)\big)(s)\Big)^2-\Big(s^{\a}\big(D^{-\alpha}_{0+}u^{\alpha}b(\phi^{(1)}_u+\kappa_u,\phi^{(2)}_u+B_u^H)\big)(s)\Big)^2\Bigg|ds\nonumber\\&\leq \frac{1}{2}\int_{0}^{1}\Big(s^{\a}\big(D^{\alpha}_{0+}u^{\alpha}(b(\phi^{(1)}_u+\kappa_u,\phi^{(2)}_u+B_u^H)-b(\phi_u))\big)(s)\Big)^2ds\nonumber\\&+\int_{0}^{1}\Big|\Big(s^{\a}\big(D^{\alpha}_{0+}u^{\alpha}(b(\phi^{(1)}_u+\kappa_u,\phi^{(2)}_u+B_u^H)-b(\phi_u))\big)(s)\Big)s^{\a}D^{\a}_{0+}s^{-\a}b(\phi_s)\Big|ds\nonumber
\end{align}
Using \eqref{the following need} it is easy to see that
$$|J_4|\leq C(L_1,L_2,L_3,L_4,\a,\b,\phi,b_x,b_y)\varepsilon.$$
As a consequence,
\begin{align}\label{J_4}
	\limsup _{\varepsilon \rightarrow 0}\ E(\exp(cJ_4)|\|B^H\|_{\beta}<\varepsilon)\leq1,
\end{align}
for every real number $c$.

$\clubsuit \ \text{Term}$ \ $J_1$\\
Applying classical Taylor expansion to $b(\phi^{(1)}_s+\kappa_s,B_s^H+\phi^{(2)}_s)$ at $\phi$ we have
\begin{align}
	b(\phi^{(1)}_s+\kappa_s,B_s^H+\phi^{(2)}_s)=b(\phi_s)+b_x(\phi_s)\kappa_s+b_y(\phi_s)B_s^H+R_s^{x,y},\nonumber
\end{align}
where $R^{x,y}$ denotes the remainder term. If $\|B^H\|_{\b}\leq \varepsilon$, by Young's inequality we have that
\begin{align}\label{estimate of Remainder term}
	\|R^{x,y}\|_{\infty}\leq C(L_1)\varepsilon^2.
\end{align}
Hence, we can rewrite the term $J_1$.
\begin{align}\label{the part of J_1}
	J_1&=\int_{0}^{1}s^{\alpha}\big(D^{\alpha}_{0+}u^{-\alpha}b(\phi^{(1)}_u+\kappa_u,\phi^{(2)}_u+B_u^H)\big)(s)dW_s\nonumber\\&=\int_{0}^{1}s^{\alpha}\Big(D^{\alpha}_{0+}u^{-\alpha}\big(b(\phi_u)+b_x(\phi_u)\kappa_u+b_y(\phi_u)B_u^H+R_u^{x,y}\big)\Big)(s)dW_s\nonumber\\&:=J_{11}+J_{12}+J_{13}+J_{14},\
\end{align}
Applying Theorem \ref{no random function} to $f=cs^{\a}\big(D^{\a}_{0+}u^{-\a}b(\phi_u)\big)(s)$ and Lemma \ref{norm}, we have
\begin{align}\label{J_{11}}
	\limsup _{\varepsilon \rightarrow 0}\ E(\exp(cJ_{11})|\|B^H\|_{\beta}<\varepsilon)\leq1,
\end{align}
for every real number $c$.\\
We now deal with the term $J_{12}$. By using the fact $b_x$ is bounded, the definition of $\kappa$ with $\|\kappa\|_{\b} \leq C(L_1)\|B^H\|_{\b}$ and the formula for fractional integration by parts $(2.69)$ in \cite{SKM93}, so we have
\begin{align}
	J_{12}=&\int_{0}^{1}s^{\alpha}\Big(D^{\alpha}_{0+}u^{-\alpha}\big(b_x(\phi_u)\kappa_u\big)\Big)(s)dW_s\nonumber\\=&\int_{0}^{1}s^{-\a}b_x(\phi_s)\kappa_s \big(D^{\a}_{1-}u^{\a}\big)(s)dW_s\nonumber\\=&\int_{0}^{1}s^{-\a}b_x(\phi_s) \big(D^{\a}_{1-}u^{\a}\big)(s)\int_{0}^{s}\big(\sigma(\tilde{X}_r,\tilde{Y}_r)-\sigma(\phi_r)\big)drdW_s\nonumber\\=&\int_{0}^{1}\big(\sigma(\tilde{X}_r,\tilde{Y}_r)-\sigma(\phi_r)\big)\int_{r}^{1}s^{-\a}b_x(\phi_s) \big(D^{\a}_{1-}u^{\a}\big)(s)dW_sdr,&\nonumber\\\leq&\max\{\int_{0}^{1}C\varepsilon\int_{r}^{1}s^{-\a}b_x(\phi_s) \big(D^{\a}_{1-}u^{\a}\big)(s)dW_sdr, -\int_{0}^{1}C\varepsilon\int_{r}^{1}s^{-\a}b_x(\phi_s) \big(D^{\a}_{1-}u^{\a}\big)(s)dW_sdr\},\nonumber
\end{align}
Then we further obtain
\begin{align}
	0\leq |J_{12}|&\leq \Big|\int_{0}^{1}C\varepsilon\int_{r}^{1}s^{-\a}b_x(\phi_s) \big(D^{\a}_{1-}u^{\a}\big)(s)dW_sdr\Big|\nonumber\\&\leq\Big|\int_{0}^{1}C\varepsilon s^{1-\a}b_x(\phi_s) \big(D^{\a}_{1-}u^{\a}\big)(s) dW_s\Big|\nonumber
\end{align}
Applying Theorem \ref{no random function 2} to $h=C\varepsilon s^{1-\a}b_x(\phi_s) \big(D^{\a}_{1-}u^{\a}\big)(s)$, we have
\begin{align}\label{J_{12}}
	\limsup _{\varepsilon \rightarrow 0}\ E(\exp(cJ_{12})|\|B^H\|<\varepsilon)\leq1,
\end{align}
for every real number $c$.
\\
In order to study the limit behavior of the conditional exponential moments of the term $J_{13}$. We will express $J_{13}$ as a double stochastic integral with respect to $W$ based on the Weyl representation of the fractional derivative and the integral representation of fractional Brownian motion $B^H$, so we obtain
\begin{align}
	J_{12}=&\int_{0}^{1}s^{\alpha}\Big(D^{\alpha}_{0+}u^{-\alpha}\big(b_y(\phi_u)B^H_u\big)\Big)(s)dW_s\nonumber\\=&\frac{1}{\G(1-\a)}\int_{0}^{1}\Big(s^{-\a}b_y(\phi_s)B_s^H+\a s^{\a}\int_{0}^{s}\frac{s^{-\a}b_y(\phi_s)B_s^H-u^{-\a}b_y(\phi_u)B_u^H}{(s-u)^{\a+1}}du\Big)dW_s\nonumber\\=&\frac{1}{\G(1-\a)}\int_{0}^{1}\int_{0}^{1}s^{-\a}b_y(\phi_s)K^H(s,r)\mathbb{I}_{r\leq s}dW_r dW_s+\a\int_{0}^{1}s^{\a}\nonumber\\&\cdot \int_{0}^{s}\frac{s^{-\a}b_y(\phi_s)\int_{0}^{s}K^H(s,r)dW_r-u^{-\a}b_y(\phi_u)\int_{0}^{u}K^H(u,r)dW_r}{(s-u)^{\a+1}}dudW_s\nonumber\\=&\int_{0}^{1}\int_{0}^{1}f(s,r)dW_rdW_s=\int_{0}^{1}\int_{0}^{1}\tilde{f}(s,r)dW_rdW_s,\nonumber
\end{align}
where $\tilde{f}$ is the symmetrization of the function ($\tilde f(s,r)=\tilde f(r,s)$)
\begin{align}
	f(s,r)=&\frac{1}{\G(1-\a)}\Big(s^{-\a}b_y(\phi_s)K^H(s,r)\mathbb{I}_{r\leq s}+\a s^{\a}\nonumber\\&\cdot \int_{0}^{s}\frac{s^{-\a}b_y(\phi_s)K^H(s,r)\mathbb{I}_{r\leq s}-u^{-\a}b_y(\phi_u)K^H(u,r)\mathbb{I}_{r\leq u}}{(s-u)^{\a+1}}du\Big)\nonumber
\end{align}
By Lemma \ref{trace class of regular case} the operator $K(\tilde{f})$ is nuclear. So the trace of this operator can be obtained as
$$Tr\tilde{f}=\int_{0}^{1}\tilde{f}(s,s)ds=\frac{1}{2}\int_{0}^{1}f(s,s)ds.$$
In order to compute the integral $\int_{0}^{1}f(s,s)ds.$ Let us rewrite $f(s,r)$ by $\int_{0}^{u}=\int_{0}^{r}+\int_{r}^{u}$ and $K^H(u,u)=0$,
\begin{align}
	f(s,r)=&\frac{1}{\G(1-\a)}\Big(s^{-\a}b_y(\phi_s)K^H(s,r)\mathbb{I}_{r\leq s}+\a s^{\a}\nonumber\\&\cdot \int_{0}^{u}\frac{s^{-\a}b_y(\phi_s)K^H(s,r)\mathbb{I}_{r\leq s}-u^{-\a}b_y(\phi_u)K^H(u,r)\mathbb{I}_{r\leq u}}{(s-u)^{\a+1}}du\Big)\nonumber\\=&\frac{1}{\G(1-\a)}s^{-\a}b_y(\phi_s)K^H(s,r)\mathbb{I}_{r\leq s}\nonumber\\+&\frac{\a s^{\a}}{\G(1-\a)}\int_{0}^{r}\frac{s^{-\a}b_y(\phi_s)K^H(s,r)\mathbb{I}_{r\leq s}-u^{-\a}b_y(\phi_u)K^H(u,r)\mathbb{I}_{r\leq u}}{(s-u)^{\a+1}}du\nonumber\\+&\frac{\a s^{\a}}{\G(1-\a)}\int_{r}^{s}\frac{s^{-\a}b_y(\phi_s)K^H(s,r)\mathbb{I}_{r\leq s}-u^{-\a}b_y(\phi_u)K^H(u,r)\mathbb{I}_{r\leq u}}{(s-u)^{\a+1}}du\nonumber\\ =&\frac{1}{\G(1-\a)}s^{-\a}b_y(\phi_s)K^H(s,r)\mathbb{I}_{r\leq s}+\frac{\a s^{\a}}{\G(1-\a)}\int_{0}^{r}\frac{s^{-\a}b_y(\phi_s)K^H(s,r)\mathbb{I}_{r\leq s}}{(s-u)^{\a+1}}du\nonumber\\+& \frac{\a s^{\a}}{\G(1-\a)}\int_{r}^{s}\frac{s^{-\a}b_y(\phi_s)K^H(s,r)\mathbb{I}_{r\leq s}-u^{-\a}b_y(\phi_u)K^H(u,r)\mathbb{I}_{r\leq u}}{(s-u)^{\a+1}}du\nonumber\\:=&f_1(s,r)+f_2(s,r)+f_3(s,r).\nonumber
\end{align}
It is clear that $f_1(s,s)=0$ when $s=r$ due to $K^H(s,s)=0$. Since $H\geq \frac{1}{2}$ the kernel $K^H$ can be written as
\begin{align}\label{the representation of K^H}
	K^H(s,r)=c_H\a r^{-\a}\int_{r}^{s}(\theta-r)^{\a-1}\theta^{\a}d\theta.
\end{align}
The change of variable $w=\frac{\theta-r}{s-r}$ yields
\begin{align}
	f_2(s,r)=&\frac{\a s^{\a}}{\G(1-\a)}\int_{0}^{r}\frac{s^{-\a}b_y(\phi_s)K^H(s,r)\mathbb{I}_{r\leq s}}{(s-u)^{\a+1}}du\nonumber\\=&\frac{1}{\G(1-\a)}b_y(\phi_s)K^H(s,r)\big((s-r)^{-\a}-s^{-\a}\big)\nonumber\\=&\frac{\a c_H}{\G(1-\a)}b_y(\phi_s)\big((s-r)^{-\a}-s^{-\a}\big)r^{-\a}\int_{r}^{s}(\theta-r)^{\a-1}\theta^{\a}d\theta\nonumber\\=&\frac{\a c_H}{\G(1-\a)}b_y(\phi_s)\big((s-r)^{-\a}-s^{-\a}\big)(s-r)^{\a}\int_{0}^{1}w^{\a-1}\big((s-r)w+r\big)^{\a}dw\nonumber,
\end{align}
and hence, we have
\begin{align}
	f_2 (s,s)=\frac{\a c_H}{\G(1-\a)}b_y(\phi_{s})\int_{0}^{1}w^{\a-1}dw=\frac{ c_H}{\G(1-\a)}b_y(\phi_{s}).\nonumber
\end{align}
On the other hand,
\begin{align}
	f_3(s,r)=&\frac{\a s^{\a}}{\G(1-\a)}\int_{r}^{s}\frac{s^{-\a}b_y(\phi_s)K^H(s,r)\mathbb{I}_{r\leq s}-u^{-\a}b_y(\phi_u)K^H(u,r)\mathbb{I}_{r\leq u}}{(s-u)^{\a+1}}du\nonumber\\:=&f_{31}(s,r)+f_{32}(s,r)+f_{33}(s,r),\nonumber
\end{align}
where $(r\leq u\leq s)$
\begin{align}
	f_{31}(s,r)&=\frac{\a s^{\a}}{\G(1-\a)}b_y(\phi_s)K^H(s,r)\int_{r}^{s}\frac{s^{-\a}-u^{-\a}}{(s-u)^{\a+1}}du,\nonumber\\f_{32}(s,r)&=\frac{\a s^{\a}}{\G(1-\a)}K^H(s,r)\int_{r}^{s}\frac{b_y(\phi_s)-b_y(\phi_u)}{(s-u)^{\a+1}}u^{-\a}du,\nonumber\\f_{33}(s,r)&=\frac{\a s^{\a}}{\G(1-\a)}\int_{r}^{s}\frac{K^H(s,r)-K^H(u,r)}{(s-u)^{\a+1}}u^{-\a}b_y(\phi_u)du.\nonumber
\end{align}
Recall inequality $|s^{-\a}-u^{-\a}|\leq\a r^{-\a-1}(s-u)$, so we have
$$|f_{31}(s,r)|\leq\frac{\a^2 s^{\a}}{(1-\a)\G(1-\a)}r^{-\a-1}(s-r)^{1-\a}|b_y(\phi_s)K^H(s,r)|,$$
and hence $f_{31}(s,s)=0$.
Since $\phi$ is $H$-H$\mathrm{\ddot{o}}$lder continuous and $b_y$ is Lipschitz continuous with constant $L_3$ we have
\begin{align}
	|f_{32}(s,r)|&=\frac{\a s^{\a}}{\G(1-\a)}\Big| K^H(s,r)\int_{r}^{s}\frac{b_y(\phi_s)-b_y(\phi_u)}{(s-u)^{\a+1}}u^{-\a}du\Big|	\nonumber\\&\leq C(L_3)\frac{\a s^{\a}}{\G(1-\a)}\Big| K^H(s,r)\int_{r}^{s}(s-u)^{-\frac{1}{2}}u^{-\a}du\Big|,\nonumber
\end{align}
Which implies that $f_{32}(s,s)=0$.

Using the expression \eqref{the representation of K^H} of the kernel $K^H$, we have
\begin{align}
	f_{33}(s,r)&=\frac{\a s^{\a}}{\G(1-\a)}\int_{r}^{s}\frac{K^H(s,r)-K^H(u,r)}{(s-u)^{\a+1}}u^{-\a}b_y(\phi_u)du\nonumber\\&=\frac{c_H\a^2 s^{\a}r^{-\a}}{\G(1-\a)}\int_{r}^{s}\frac{\int_{u}^{s}(\theta-r)^{\a-1}\theta^{\a}d\theta}{(s-u)^{\a+1}}u^{-\a}b_y(\phi_u)du\nonumber\\&=\frac{c_H\a^2 s^{\a}r^{-\a}}{\G(1-\a)}(s-r)^{\a}\int_{r}^{s}\frac{\int_{\frac{u-r}{s-r}}^{1}m^{\a-1}\big((s-r)m+r\big)^{\a}dm}{(s-u)^{\a+1}}u^{-\a}b_y(\phi_u)du\nonumber\\&=\frac{c_H\a^2 s^{\a}r^{-\a}}{\G(1-\a)}\int_{0}^{1}\frac{\int_{n}^{1}m^{\a-1}\big((s-r)m+r\big)^{\a}dm}{(1-n)^{\a+1}}\big((s-r)n+r\big)^{-\a}b_y(\phi_{(s-r)n+r})dn,\nonumber
\end{align}
where the last two equality has been obtained with the change of variables $m=\frac{\theta-r}{s-r}$ and $n=\frac{u-r}{s-r}$.
So as variable $s=r$ we easily get
\begin{align}
	f_{33}(s,s)&=\frac{c_H\a^2 }{\G(1-\a)}b_y(\phi_{s})\int_{0}^{1}\frac{\int_{n}^{1}m^{\a-1}s^{\a}dm}{(1-n)^{\a+1}}s^{-\a}dn\nonumber\\&=\frac{c_H\a }{\G(1-\a)}b_y(\phi_{s})\int_{0}^{1}\frac{1-n^{\a}}{(1-n)^{\a+1}}dn\nonumber\\&=\frac{c_H\a }{\G(1-\a)}b_y(\phi_{s})\frac{\big(\G(1+\a)\G(1-\a)-1\big)}{\a}\nonumber\\&=c_H\G(\a+1)b_y(\phi_{s})-\frac{c_H}{\G(1-\a)}b_y(\phi_{s}).\nonumber
\end{align}
As a consequence, we have
\begin{align}
	Tr(\tilde{f})&=\frac{1}{2}\int_{0}^{1}f(s,s)ds=\frac{1}{2}\int_{0}^{1}f_{1}(s,s)+f_{2}(s,s)+f_3(s,s)ds\nonumber\\&=\frac{1}{2}\int_{0}^{1}\frac{c_H}{\G(1-\a)}b_y(\phi_{s})ds+\frac{1}{2}\int_{0}^{1}f_{31}(s,s)+f_{32}(s,s)+f_{33}(s,s)ds\nonumber\\&=\frac{c_H\G(1+\a)}{2}\int_{0}^{1}b_y(\phi_{s})ds=\frac{d_H}{2}\int_{0}^{1}b_y(\phi_{s})ds.\nonumber
\end{align}
To summarized what we have proved, Lemma \ref{key lemma} and Lemma \ref{norm} give us
$$
\limsup _{\varepsilon \rightarrow 0}\ E(\exp(J_{13})|\|B^H\|<\varepsilon)=\exp\Big(-\frac{d_H}{2}\int_{0}^{1}b_y(\phi_{s})ds\Big).
$$
Finally, it only remains to study the limit behavior of the term $J_{14}$. For any $c\in \mathbb{R}$ and $\delta>0$ we can write
\begin{align}
	E&\Big(\exp(cJ_{14})|\|B^H\|\leq \varepsilon\Big)\nonumber\\&\leq e^{\delta}+\int_{\delta}^{\infty}e^{\xi}P(|cJ_{14}|>\xi|\|B^H\|_{\b}\leq \varepsilon)d \xi \nonumber\\&+e^{\delta}\mathbb{P}(|cJ_{14}|>\delta |\|B^H\|_{\b}\leq \varepsilon).\nonumber
\end{align}
Define the martingale $M_t=c\int_{0}^{t}s^{-\a}\Big(D^{\alpha}_{0+}u^{-\alpha}R_u^{x,y}\Big)(s)dW_s$. In order to estimate whose quadratic variations we make use of the following expression of the residual term
\begin{align}
	R_s^{x,y}&=b(\phi^{(1)}_s+\kappa_s,B_s^H+\phi^{(2)}_s)-b(\phi_s)-b_x(\phi_s)\kappa_s-b_y(\phi_s)B_s^H\nonumber\\&=b(\phi^{(1)}_s+\kappa_s,B_s^H+\phi^{(2)}_s)-b(\phi_s^{(1)},\phi^{(2)}_s+B_s^H)+b(\phi_s^{(1)},\phi^{(2)}_s+B_s^H)-b(\phi_s)\nonumber\\&-\int_{0}^{1}b_x(\phi_s)d\lambda\kappa_s-\int_{0}^{1}b_y(\phi_s)d\mu B_s^H\nonumber\\&=\int_{0}^{1}\big(b_x(\phi_s^{(1)}+\lambda\kappa_s,\phi_s^{(2)}+B_s^H)-b_x(\phi_s)\big)d\lambda \kappa_s+\int_{0}^{1}\big(b_y(\phi_s^{(1)},\phi_s^{(2)}+\mu B_s^H)-b_y(\phi_s)\big)d\mu B^H_s\nonumber\\&=\int_{0}^{1}\big(b_x(\phi_s^{(1)}+\lambda\kappa_s,\phi_s^{(2)}+B_s^H)-b_x(\phi^{(1)}_s,\phi^{(2)}_s+B_s^H)\big)d\lambda \kappa_s+\int_{0}^{1}\big(b_x(\phi_s^{(1)},\phi_s^{(2)}+B_s^H)-b_x(\phi^{(1)}_s,\phi^{(2)}_s)\big)d\lambda \kappa_s\nonumber\\&+\int_{0}^{1}\big(b_y(\phi_s^{(1)},\phi_s^{(2)}+\mu B_s^H)-b_y(\phi^{(1)}_s,\phi^{(2)}_s)\big)d\mu B^H_s\nonumber\\&=\int_{0}^{1}\int_{0}^{\lambda}b_{xx}(\phi_s^{(1)}+\theta\kappa_s,\phi_s^{(2)}+B_s^H)d\theta d\lambda (\kappa_s)^2+\int_{0}^{1}\int_{0}^{1}b_{xy}(\phi_s^{(1)},\phi_s^{(2)}+\pi B_s^H)d\pi d\lambda (B_s^H\kappa_s)\nonumber\\&+\int_{0}^{1}\int_{0}^{\lambda}b_{yy}(\phi_s^{(1)},\phi_s^{(2)}+\nu B_s^H)d\nu d\mu (B^H_s)^2\nonumber.
\end{align}
Using that $b_{xx},b_{xy}, b_{yy}$ are Lipschitz with constant $L_5, L_6,L_7$, we have
\begin{align}
	&R_s^{x,y}-R_r^{x,y}\nonumber\\&=\int_{0}^{1}\int_{0}^{\lambda}\Big[b_{xx}(\phi_s^{(1)}+\theta\kappa_s,\phi_s^{(2)}+B_s^H)-b_{xx}(\phi_r^{(1)}+\theta\kappa_r,\phi_r^{(2)}+B_r^H)\Big]d\theta d\lambda (\kappa_s)^2\nonumber\\&+\int_{0}^{1}\int_{0}^{1}\Big[b_{xy}(\phi_s^{(1)},\phi_s^{(2)}+\pi B_s^H)-b_{xy}(\phi_r^{(1)},\phi_r^{(2)}+\pi B_r^H)\Big]d\pi d\lambda (B_s^H\kappa_s)\nonumber\\&+\int_{0}^{1}\int_{0}^{\lambda}\Big[b_{yy}(\phi_s^{(1)},\phi_s^{(2)}+\nu B_s^H)-b_{yy}(\phi_r^{(1)},\phi_r^{(2)}+\nu B_r^H)\Big]d\nu d\mu (B^H_s)^2\nonumber\\&+\int_{0}^{1}\int_{0}^{\lambda}b_{xx}(\phi_r^{(1)}+\theta\kappa_r,\phi_r^{(2)}+B_r^H)d\theta d\lambda ((\kappa_s)^2-(\kappa_r)^2)+\int_{0}^{1}\int_{0}^{1}b_{xy}(\phi_r^{(1)},\phi_r^{(2)}+\pi B_r^H)d\pi d\lambda (B_s^H\kappa_s-B_r^H\kappa_r)\nonumber\\&+\int_{0}^{1}\int_{0}^{\lambda}b_{yy}(\phi_r^{(1)},\phi_r^{(2)}+\nu B_r^H)d\nu d\mu ((B^H_s)^2-(B^H_r)^2)\nonumber\\&\leq\int_{0}^{1}\int_{0}^{\lambda}\Big|b_{xx}(\phi_s^{(1)}+\theta\kappa_s,\phi_s^{(2)}+B_s^H)-b_{xx}(\phi_r^{(1)}+\theta\kappa_r,\phi_r^{(2)}+B_r^H)\Big|d\theta d\lambda (\kappa_s)^2\nonumber\\&+\int_{0}^{1}\int_{0}^{1}\Big|b_{xy}(\phi_s^{(1)},\phi_s^{(2)}+\pi B_s^H)-b_{xy}(\phi_r^{(1)},\phi_r^{(2)}+\pi B_r^H)\Big|d\pi d\lambda |B_s^H\kappa_s|\nonumber\\&+\int_{0}^{1}\int_{0}^{\lambda}\Big|b_{yy}(\phi_s^{(1)},\phi_s^{(2)}+\nu B_s^H)-b_{yy}(\phi_r^{(1)},\phi_r^{(2)}+\nu B_r^H)\Big|d\nu d\mu (B^H_s)^2\nonumber\\&+\int_{0}^{1}\int_{0}^{\lambda}\big|b_{xx}(\phi_r^{(1)}+\theta\kappa_r,\phi_r^{(2)}+B_r^H)\big|d\theta d\lambda \big|(\kappa_s)^2-(\kappa_r)^2\big|+\int_{0}^{1}\int_{0}^{1}\big|b_{xy}(\phi_r^{(1)},\phi_r^{(2)}+\pi B_r^H)\big|d\pi d\lambda \big|B_s^H\kappa_s-B_r^H\kappa_r\big|\nonumber\\&+\int_{0}^{1}\int_{0}^{\lambda}\big|b_{yy}(\phi_r^{(1)},\phi_r^{(2)}+\nu B_r^H)\big|d\nu d\mu \big|(B^H_s)^2-(B^H_r)^2\big|\nonumber\\&\leq\nonumber L_5\Big(\frac{1}{2}|\phi_s-\phi_r|+\frac{1}{2}|B_s^2-B_r^2|+\frac{1}{6}|\kappa_s^2-\kappa_r^2|\Big)(\kappa_s)^2+L_6\Big(|\phi_s-\phi_r|+\frac{1}{2}|B_s^2-B_r^2|\Big)|B_s^H\kappa_s|\nonumber\\&+L_7\Big(\frac{1}{2}|\phi_s-\phi_r|+\frac{1}{6}|B_s^2-B_r^2|+\frac{1}{2}|\kappa_s^2-\kappa_r^2|\Big)(B_s^H)^2\nonumber\\&+\frac{1}{2}\|b_{xx}\|_{\infty}\big|(\kappa_s)^2-(\kappa_r)^2\big|+\|b_{xy}\|_{\infty}\big|B_s^H\kappa_s-B_r^H\kappa_r\big|+\frac{1}{2}\|b_{yy}\|_{\infty}\big|(B^H_s)^2-(B^H_r)^2\big|.\nonumber
\end{align}
Combining \eqref{the bound of remainder term} and \eqref{integral transform}, we have
\begin{align}
	&\G(1-\a)\Big|s^{\a}(D^{\a}_{0+}u^{-\a}R_u^{x,y})(s)\Big|\nonumber\\&=\Bigg|s^{-\a}R_s^{x,y}+\a s^{\a}\int_{0}^{s}\frac{s^{-\a}R_s^{x,y}-r^{-\a}R_r^{x,y}}{(s-r)^{\a+1}}dr\Bigg|\nonumber\\&\leq C(L_1)s^{-\a}\varepsilon^2+\a C(L_1) s^{\a}\varepsilon^2\int_{0}^{s}\frac{|s^{-\a}-r^{-\a}|}{(s-r)^{\a+1}}dr+\a s^{\a}\int_{0}^{s}\frac{r^{-\a}|R_s^{x,y}-R_r^{x,y}|}{(s-r)^{\a+1}}dr\nonumber\\&\leq C(L_1)s^{-\a}\varepsilon^2+\a C(L_1) s^{\a}\varepsilon^2\int_{0}^{s}\frac{|s^{-\a}-r^{-\a}|}{(s-r)^{\a+1}}dr\nonumber\\&+\a s^{\a}L_5(\kappa_s)^2\int_{0}^{s}\frac{r^{-\a}\Big(\frac{1}{2}|\phi_s-\phi_r|+\frac{1}{2}|B_s^2-B_r^2|+\frac{1}{6}|\kappa_s^2-\kappa_r^2|\Big)}{(s-r)^{\a+1}}dr\nonumber\\&+\a s^{\a}L_6|\kappa_s B_s^H|\int_{0}^{s}\frac{r^{-\a}\Big(|\phi_s-\phi_r|+\frac{1}{2}|B_s^2-B_r^2||\Big)}{(s-r)^{\a+1}}dr\nonumber\\&+\a s^{\a}L_7(B^H_s)^2\int_{0}^{s}\frac{r^{-\a}\Big(\frac{1}{2}|\phi_s-\phi_r|+\frac{1}{6}|B_s^2-B_r^2|+\frac{1}{2}|\kappa_s^2-\kappa_r^2|\Big)}{(s-r)^{\a+1}}dr\nonumber\\&+\frac{\|b_{xx}\|_{\infty}}{2}s^{\a}\int_{0}^{s}\frac{r^{-\a}\big|(\kappa_s)^2-(\kappa_r)^2\big|}{(s-r)^{\a+1}}dr+\|b_{xy}\|_{\infty}s^{\a}\int_{0}^{s}\frac{r^{-\a}\big|B_s^H\kappa_s-B_r^H\kappa_r\big|}{(s-r)^{\a+1}}dr\nonumber\\&+\frac{\|b_{yy}\|_{\infty}}{2}s^{\a}\int_{0}^{s}\frac{r^{-\a}\big|(B^H_s)^2-(B_r^H)^2\big|}{(s-r)^{\a+1}}dr\nonumber\\&\leq C(L_1)s^{-\a}\varepsilon^2+C(\a,L_1)s^{-\a}\varepsilon^2+\frac{\a}{2}s^{2\beta+\frac{1}{2}}L_5\varepsilon^2\int_{0}^{1}t^{-\a}(1-t)^{-\frac{1}{2}}dt\nonumber\\&+\frac{\a}{2}s^{3\beta-\a-1}L_5\varepsilon^3\int_{0}^{1}t^{-\a}(1-t)^{\beta-\a-1}dt+\frac{\a}{6}s^{3\beta-\a-1}L_5\varepsilon^3\int_{0}^{1}t^{-\a}(1-t)^{\beta-\a-1}dt\nonumber\\&+\a s^{2\beta+1}L_6\varepsilon^2\int_{0}^{1}t^{-\a}(1-t)^{-\frac{1}{2}}dt+\frac{\a}{2}s^{3\beta-\a-1}L_6\varepsilon^3\int_{0}^{1}t^{-\a}(1-t)^{\beta-\a-1}dt\nonumber\\&+\frac{\a}{2} s^{2\beta+1}L_7\varepsilon^2\int_{0}^{1}t^{-\a}(1-t)^{-\frac{1}{2}}dt+\frac{\a}{6}s^{3\beta-\a-1}L_7\varepsilon^3\int_{0}^{1}t^{-\a}(1-t)^{\beta-\a-1}dt\nonumber\\&+\frac{\a}{2}s^{3\beta-\a-1}L_7\varepsilon^3\int_{0}^{1}t^{-\a}(1-t)^{\beta-\a-1}dt+\frac{\|b_{xx}\|_{\infty}}{2}s^{2\beta-\a}\int_{0}^{1}t^{-\a}(1-t)^{2\beta-\a-1}dt\nonumber\\&+\|b_{xy}\|_{\infty}s^{\beta-\a}\varepsilon^2\int_{0}^{1}t^{-\a}(1-t)^{\beta-\a-1}dt+\|b_{xy}\|_{\infty}s^{2\beta-\a}\varepsilon^2\int_{0}^{1}t^{-\a}(1-t)^{\beta-\a-1}dt\nonumber\\&+\frac{\|b_{yy}\|_{\infty}}{2}s^{2\beta-\a}\int_{0}^{1}t^{-\a}(1-t)^{2\beta-\a-1}dt\nonumber\\&\leq C(\a,L_1)\varepsilon^2+C(\a,\b,L_5,L_6,L_7)s^{2\b+1}\varepsilon^2+C(\a,\b,L_5,L_6,L_7)s^{3\b-\a-1}\varepsilon^3\nonumber\\&+C(\a,\b,L_5,L_6,L_7)s^{2\b+1}\varepsilon^2+C(\a,\b,L_5,L_6,L_7)s^{2\b+1}\varepsilon^2+C(\a,\b)s^{2\b-\a}\varepsilon^2+C(\a,\b)s^{\b-\a}\varepsilon^2\nonumber
\end{align}
As a consequence,
\begin{align}
	\langle M \rangle_t=&c^2\int_{0}^{t}\Big(s^{\alpha}\Big(D^{\alpha}_{0+}u^{-\alpha}R_u^{x,y}\Big)(s)\Big)^2ds\leq C(\a,\b,L_1,L_5,L_6,L_7)\varepsilon^4.\nonumber
\end{align}
Applying the exponential inequality for martingales, we have
\begin{align}\label{estimate of expon}
	\mathbb{P}\Big(\Big|c\int_{0}^{t}s^{-\a}\Big(I^{\alpha}_{0+}u^{\alpha}R_u^{x,y}\Big)(s)dW_s\Big|>\xi, \|B^H\|_{\b}\leq \varepsilon\Big)\leq \exp\Big(-\frac{\xi^2}{2C(\a,\b,L_1,L_5,L_6,L_7)\varepsilon^{4}}\Big),
\end{align}
for every real number $c$.\\
Combining Lemma \ref{small ball of holder} and inequality \eqref{estimate of expon}, we see that
\begin{align}
	\mathbb{P}&\Big(\Big|c\int_{0}^{1}s^{-\a}\Big(I^{\alpha}_{0+}u^{\alpha}R_u^{x,y}\Big)(s)dW_s\Big|>\xi,\Big|\|B^H\|_{\b}\leq \varepsilon\Big)\nonumber\\&\leq \exp\Big(-\frac{\xi^2}{2C(\a,\b,L_1,L_5,L_6,L_7)\varepsilon^{4}}\Big)\exp\big(C_H \varepsilon^{-\frac{1}{H-\b}}\big).\nonumber
\end{align}
\\
Using the latter estimate we have for every $\delta>0$ and every $0<\varepsilon<1$
\begin{align}
	E&\Big(\exp(cI_{14})|\|B^H\|_{\beta}\leq \varepsilon\Big)\nonumber\\&\leq e^{\delta}+\int_{\delta}^{\infty}\exp\Big\{\xi-\frac{\xi^2}{2C(\a,\b,L_1,L_5,L_6,L_7)\varepsilon^{4}}+C_H \varepsilon^{-\frac{1}{H-\b}}\Big\}d \xi \nonumber\\&+\exp\Big\{\delta-\frac{\delta^2}{2C(\a,\b,L_1,L_5,L_6,L_7)\varepsilon^{4}}+C_H \varepsilon^{-\frac{1}{H-\b}}\Big\}.\nonumber
\end{align}
Letting $\varepsilon$ and then $\delta$ tend to zero, we obtain
\begin{align}\label{J_{14}}
	\limsup _{\varepsilon \rightarrow 0}\ E(\exp(cJ_{14})|\|B^H\|_{\b}<\varepsilon)\leq1,
\end{align}
for every real number $c$. \\

In conclusion, the following expression is a consequence of Lemma \ref{Separation lemma} and inequalities \eqref{small all 2}, \eqref{J_2}, \eqref{I_3}, \eqref{I_4}, \eqref{the part of J_1}, \eqref{J_{11}}, \eqref{I_{12}}, \eqref{I_{13}} and \eqref{I_{14}}.
$$
\lim _{\varepsilon \rightarrow 0} \frac{\mathbb{P}(\|Y-\phi^{(2)}\|_{\b}<\varepsilon)}{\mathbb{P}(\|B^H\|_{\b}<\varepsilon)}=\exp \left(-\frac{1}{2}\int_{0}^{1}\big|\dot{\phi}^2_s-s^{\a}\big(D^{\a}_{0+}u^{-\a}b(\phi_u)\big)(s)\big|^2ds-\frac{d_H}{2}\int_{0}^{1}b_y(\phi_{s})ds\right).
$$
The proof of Theorem \ref{th:regular case} is complete. \qed
\subsection{Proof of Theorem \ref{EL}}
we first derive Euler-Lagrange fractional equations for the non-degenerate case. When $H<\frac{1}{2}$, recall the expression of OM action functional proved in \cite{MN02}.

Consider the functional $I$ given by
$$I(\phi)=\frac{1}{2}\int_{0}^{1}\big|\dot{\phi}_s-s^{-\a}\big(I^{\a}_{0+}u^{\a}b(\phi_u)\big)(s)\big|^2ds+\frac{d_H}{2}\int_{0}^{1}b'(\phi_{s})ds.$$
Then we have
\begin{align}
	I(\phi+\varepsilon\psi)=\frac{1}{2}\int_{0}^{1}\Big|\dot{\phi}_s+\varepsilon\dot{\psi}_s-s^{-\a}\big(I^{\a}_{0+}u^{\a}b(\phi_u+\varepsilon\psi_u)\big)(s)\Big|^2ds+\frac{d_H}{2}\int_{0}^{1}b'(\phi_{s}+\varepsilon\psi_s)ds\nonumber.
\end{align}
Therefore, the derivative of $I(\phi+\varepsilon\psi)$ w.r.t. $\varepsilon$ equals
\begin{align}
	\frac{d}{d\varepsilon}I(\phi+\varepsilon\psi)&=\int_{0}^{1}\Big(\dot{\phi}_s+\varepsilon\dot{\psi}_s-s^{-\a}\big(I^{\a}_{0+}u^{\a}b(\phi_u+\varepsilon\psi_u)\big)(s)\Big)\big(\dot{\psi}_s-s^{-\a}\big(I^{\a}_{0+}u^{\a}b'(\phi_u+\varepsilon\psi_u)(\psi_u)\big)(s)\big)ds\nonumber\\&+\frac{d_H}{2}\int_{0}^{1}b''(\phi_{s}+\varepsilon\psi_s)\psi_sds.\nonumber
\end{align}
Let $\varepsilon=0$ and since $\phi$ is a minimizer of $I(\phi+\varepsilon\psi)$, we have
\begin{align}
	0&=\frac{d}{d\varepsilon}I(\phi+\varepsilon\psi)\Big\vert_{\varepsilon=0}\nonumber\\&=\int_{0}^{1}\Big(\dot{\phi}_s-s^{-\a}\big(I^{\a}_{0+}u^{\a}b(\phi_u)\big)(s)\Big)\big(\dot{\psi}_s-s^{-\a}\big(I^{\a}_{0+}u^{\a}b'(\phi_u)(\psi_u)\big)(s)\big)ds\nonumber\\&+\frac{d_H}{2}\int_{0}^{1}b''(\phi_{s})\psi_sds.\nonumber
\end{align}
It follows from integration by parts for fractional derivatives that
\begin{align}
	0&=\int_{0}^{1}\Big[-\frac{d}{dt}\big(\dot{\phi}_s-s^{-\a}\big(I^{\a}_{0+}u^{\a}b(\phi_u)\big)(s)\big)+\nonumber\\&+I^{\a}_{1-}\big(u^{-2\a}(I^{\a}_{0+})v^{\a}b(\phi_v)(u)\big)s^{\a}b'(\phi_s)+\frac{d_H}{2}b''(\phi_{s})\Big]\psi_sds\nonumber.
\end{align}
Finally, due to $\psi$ has compact support, we obtain the
fractional Euler-Lagrange equation
$$I^{\a}_{1-}\big(u^{-2\a}(I^{\a}_{0+})v^{\a}b(\phi_v)(u)\big)s^{\a}b'(\phi_s)+\frac{d_H}{2}b''(\phi_{s})=\frac{d}{dt}\big(\dot{\phi}_s-s^{-\a}\big(I^{\a}_{0+}u^{\a}b(\phi_u)\big)(s)\big).$$
Similarly, we can obtain a fractional Euler-Lagrange equation when $H>\frac{1}{2}$.
$$D^{\a}_{1-}\big(u^{-2\a}(D^{\a}_{0+})v^{\a}b(\phi_v)(u)\big)s^{\a}b'(\phi_s)+\frac{d_H}{2}b''(\phi_{s})=\frac{d}{dt}\big(\dot{\phi}_s-s^{-\a}\big(D^{\a}_{0+}u^{\a}b(\phi_u)\big)(s)\big).$$
Next, let us turn our attention to the degenerate case. Consider the functional given by
$$I(\phi^{(1)})=\frac{1}{2}\int_{0}^{1}\big|\ddot{\phi}^{(1)}_s-s^{\a}\big(D^{\a}_{0+}u^{-\a}b(\phi^{(1)}_u,\dot{\phi}^{(1)}_u)\big)(s)\big|^2ds+\frac{d_H}{2}\int_{0}^{1}b_y(\phi^{(1)}_s,\dot{\phi}^{(1)}_{s})ds,$$
where $H<\frac{1}{2}$. Then we have
\begin{align}
	I(\phi^{(1)}+\varepsilon\psi^{(1)})&=\frac{1}{2}\int_{0}^{1}\Big|\ddot{\phi}^{(1)}_s+\varepsilon\ddot{\psi}^{(1)}_s-s^{-\a}\big(I^{\a}_{0+}u^{\a}b(\phi^{(1)}_u+\varepsilon\psi^{(1)}_u,\dot{\phi}^{(1)}_u+\varepsilon\dot{\psi}^{(1)}_u)\big)(s)\Big|^2ds\nonumber\\&+\frac{d_H}{2}\int_{0}^{1}b_y(\phi^{(1)}_{s}+\varepsilon\psi^{(1)}_s,\dot{\phi}^{(1)}_s+\varepsilon\dot{\psi}^{(1)}_s)ds\nonumber.
\end{align}
Therefore, the derivative of $I(\phi^{(1)}+\varepsilon\psi^{(1)})$ w.r.t. $\varepsilon$ equals
\begin{align}
	&\frac{d}{d\varepsilon}I(\phi^{(1)}+\varepsilon\psi^{(1)})\nonumber\\&=\int_{0}^{1}\Big(\ddot{\phi}^{(1)}_s+\varepsilon\ddot{\psi}^{(1)}_s-s^{-\a}\big(I^{\a}_{0+}u^{\a}b(\phi^{(1)}_u+\varepsilon\psi^{(1)}_u,\dot{\phi}^{(1)}_u+\varepsilon\dot{\psi}^{(1)}_u)\big)(s)\Big)\ddot{\psi}_s^{(1)}\nonumber\\&-\Big(\ddot{\phi}^{(1)}_s+\varepsilon\ddot{\psi}^{(1)}_s-s^{-\a}\big(I^{\a}_{0+}u^{\a}b(\phi^{(1)}_u+\varepsilon\psi^{(1)}_u,\dot{\phi}^{(1)}_u+\varepsilon\dot{\psi}^{(1)}_u)\big)(s)\Big)s^{-
		\a}\big(I^{\a}_{0+}u^{\a}b_x(\phi^{(1)}_u+\varepsilon\psi^{(1)}_u,\dot{\phi}^{(1)}_u+\varepsilon\dot{\psi}^{(1)}_u)\psi^{(1)}_u\big)(s)\nonumber\\&-\Big(\ddot{\phi}^{(1)}_s+\varepsilon\ddot{\psi}^{(1)}_s-s^{-\a}\big(I^{\a}_{0+}u^{\a}b(\phi^{(1)}_u+\varepsilon\psi^{(1)}_u,\dot{\phi}^{(1)}_u+\varepsilon\dot{\psi}^{(1)}_u)\big)(s)\Big)s^{-
		\a}\big(I^{\a}_{0+}u^{\a}b_y(\phi^{(1)}_u+\varepsilon\psi^{(1)}_u,\dot{\phi}^{(1)}_u+\varepsilon\dot{\psi}^{(1)}_u)\dot{\psi}^{(1)}_u\big)(s)\nonumber\\&+\frac{d_H}{2}\int_{0}^{1}b_{yx}(\phi^{(1)}_{s}+\varepsilon\psi^{(1)}_s,\dot{\phi}^{(1)}_s+\varepsilon\dot{\psi}^{(1)}_s)\psi^{(1)}_sds+\frac{d_H}{2}\int_{0}^{1}b_{yy}(\phi^{(1)}_{s}+\varepsilon\psi^{(1)}_s,\dot{\phi}^{(1)}_s+\varepsilon\dot{\psi}^{(1)}_s)\dot{\psi}^{(1)}_sds.\nonumber
\end{align}
Let $\varepsilon=0$ and since $\phi^{(1)}$ is a minimizer of $I(\phi^{(1)}+\varepsilon\psi^{(1)})$, we have
\begin{align}
	0&=\frac{d}{d\varepsilon}I(\phi^{(1)}+\varepsilon\psi^{(1)})\Big\vert_{\varepsilon=0}\nonumber\\&=\int_{0}^{1}\Big(\ddot{\phi}^{(1)}_s-s^{-\a}\big(I^{\a}_{0+}u^{\a}b(\phi^{(1)}_u,\dot{\phi}^{(1)}_u\big)(s)\Big)\ddot{\psi}_s^{(1)}\nonumber\\&-\Big(\ddot{\phi}^{(1)}_s-s^{-\a}\big(I^{\a}_{0+}u^{\a}b(\phi^{(1)}_u,\dot{\phi}^{(1)}_u)\big)(s)\Big)s^{-
		\a}\big(I^{\a}_{0+}u^{\a}b_x(\phi^{(1)}_u,\dot{\phi}^{(1)}_u)\psi^{(1)}_u\big)(s)\nonumber\\&-\Big(\ddot{\phi}^{(1)}_s-s^{-\a}\big(I^{\a}_{0+}u^{\a}b(\phi^{(1)}_u,\dot{\phi}^{(1)}_u)\big)(s)\Big)s^{-
		\a}\big(I^{\a}_{0+}u^{\a}b_y(\phi^{(1)}_u,\dot{\phi}^{(1)}_u)\dot{\psi}^{(1)}_u\big)(s)\nonumber\\&+\frac{d_H}{2}\int_{0}^{1}b_{yx}(\phi^{(1)}_{s},\dot{\phi}^{(1)}_s)\psi^{(1)}_sds+\frac{d_H}{2}\int_{0}^{1}b_{yy}(\phi^{(1)}_{s},\dot{\phi}^{(1)}_s)\dot{\psi}^{(1)}_sds.\nonumber
\end{align}
It follows from integration by parts for fractional derivatives that
\begin{align}
	0&=\int_{0}^{1}\Bigg[\frac{d^2}{dt^2}\big(\ddot{\phi}^{(1)}_s-s^{\a}b_x(\phi_s^{(1)},\dot{\phi}^{(1)}_s)\Big(I^{\a}_{1-}u^{\a}\Big(\ddot{\phi}^{(1)}_u-u^{-\a}\big(I^{\a}_{0+}v^{\a}b(\phi^{(1)}_v,\dot{\phi}^{(1)}_v\big)(u)\Big)\Big)(s)\nonumber\\&+\frac{d}{dt}\Big[s^{\a}b_y(\phi_s^{(1)},\dot{\phi}^{(1)}_s)\Big(I^{\a}_{1-}u^{\a}\Big(\ddot{\phi}^{(1)}_u-u^{-\a}\big(I^{\a}_{0+}v^{\a}b(\phi^{(1)}_v,\dot{\phi}^{(1)}_v\big)(u)\Big)\Big)(s)\Big]\nonumber\\&+\frac{d_H}{2}b_{yx}(\phi^{(1)}_{s},\dot{\phi}^{(1)}_s)-\frac{d_H}{2}\frac{d}{dt}\Big(b_{yy}(\phi^{(1)}_{s},\dot{\phi}^{(1)}_s)\Big)\Bigg]\psi^{(1)}_sds.\nonumber
\end{align}
Finally, due to $\psi$ has compact support, we obtain the
fractional Euler-Lagrange equation
\begin{align}
	0&=\frac{d^2}{dt^2}\big(\ddot{\phi}^{(1)}_s-s^{\a}b_x(\phi_s^{(1)},\dot{\phi}^{(1)}_s)\Big(I^{\a}_{1-}u^{\a}\Big(\ddot{\phi}^{(1)}_u-u^{-\a}\big(I^{\a}_{0+}v^{\a}b(\phi^{(1)}_v,\dot{\phi}^{(1)}_v\big)(u)\Big)\Big)(s)\nonumber\\&+\frac{d}{dt}\Big[s^{\a}b_y(\phi_s^{(1)},\dot{\phi}^{(1)}_s)\Big(I^{\a}_{1-}u^{\a}\Big(\ddot{\phi}^{(1)}_u-u^{-\a}\big(I^{\a}_{0+}v^{\a}b(\phi^{(1)}_v,\dot{\phi}^{(1)}_v\big)(u)\Big)\Big)(s)\Big]\nonumber\\&+\frac{d_H}{2}b_{yx}(\phi^{(1)}_{s},\dot{\phi}^{(1)}_s)-\frac{d_H}{2}\frac{d}{dt}\Big(b_{yy}(\phi^{(1)}_{s},\dot{\phi}^{(1)}_s)\Big).\nonumber
\end{align}
Similarly, we can obtain a fractional Euler-Lagrange equation when $H>\frac{1}{2}$.
\begin{align}
	0&=\frac{d^2}{dt^2}\big(\ddot{\phi}^{(1)}_s-s^{\a}b_x(\phi_s^{(1)},\dot{\phi}^{(1)}_s)\Big(D^{\a}_{1-}u^{\a}\Big(\ddot{\phi}^{(1)}_u-u^{-\a}\big(D^{\a}_{0+}v^{\a}b(\phi^{(1)}_v,\dot{\phi}^{(1)}_v\big)(u)\Big)\Big)(s)\nonumber\\&+\frac{d}{dt}\Big[s^{\a}b_y(\phi_s^{(1)},\dot{\phi}^{(1)}_s)\Big(D^{\a}_{1-}u^{\a}\Big(\ddot{\phi}^{(1)}_u-u^{-\a}\big(D^{\a}_{0+}v^{\a}b(\phi^{(1)}_v,\dot{\phi}^{(1)}_v\big)(u)\Big)\Big)(s)\Big]\nonumber\\&+\frac{d_H}{2}b_{yx}(\phi^{(1)}_{s},\dot{\phi}^{(1)}_s)-\frac{d_H}{2}\frac{d}{dt}\Big(b_{yy}(\phi^{(1)}_{s},\dot{\phi}^{(1)}_s)\Big).\nonumber
\end{align}
\section{Discussion}
In this work, we have derived the Onsager–Machlup action function for  degenerate stochastic differential equations driven by fractional Brownian motion. With this function as a Lagrangian, we obtained two classes of fractional Euler-Lagrange equations, the one is based the results of \cite{MN02}, and the other is based Theorem \ref{th:singular case} and  Theorem \ref{th:regular case}. A point that should be stressed is that for the (degenerate) stochastic differential equations driven by fractional noise, we cannot extend the results to higher dimensional systems. Moreover, we will also study the mathematical properties of fractional differential equations obtained in Theorem \ref{EL} in the future.
\newcommand{\etalchar}[1]{$^{#1}$}
\providecommand{\bysame}{\leavevmode\hbox to3em{\hrulefill}\thinspace}
\providecommand{\MR}{\relax\ifhmode\unskip\space\fi MR }
\providecommand{\MRhref}[2]{%
	\href{http://www.ams.org/mathscinet-getitem?mr=#1}{#2}
}
\providecommand{\href}[2]{#2}

\section*{Acknowledgements}
The work is supported in part by the NSFC Grant Nos. 12171084 and  the fundamental Research
Funds for the Central Universities No. RF1028623037.

\providecommand{\bysame}{\leavevmode\hbox to3em{\hrulefill}\thinspace}
\providecommand{\MR}{\relax\ifhmode\unskip\space\fi MR }
\providecommand{\MRhref}[2]{%
	\href{http://www.ams.org/mathscinet-getitem?mr=#1}{#2}
}
\providecommand{\href}[2]{#2}

\section*{Conflicts of interests}
The authors declare no conflict of interests.

\end{document}